\newcommand{\indicator}[1]{\ensuremath{\mathbf{1}_{\{#1\}}}}
\newcommand{\oindicator}[1]{\ensuremath{\mathbf{1}_{{#1}}}}
\numberwithin{equation}{section}
\DeclareMathOperator{\var}{Var}
\DeclareMathOperator{\tr}{tr}
\DeclareMathOperator{\rank}{rank}
\newcommand{\Prob}{\mathbb{P}}
\newcommand{\E}{\mathbb{E}}
\newcommand{\C}{\mathbb{C}}
\renewcommand\Re{\operatorname{Re}}
\renewcommand\Im{\operatorname{Im}}
\newcommand{\eps}{\varepsilon}
\newcommand{\mat}{\mathbf}
\theoremstyle{plain}
  \newtheorem{theorem}{Theorem}[section]
  \newtheorem{assumption}[theorem]{Assumption}
  \newtheorem{lemma}[theorem]{Lemma}
\theoremstyle{definition}
  \newtheorem{definition}[theorem]{Definition}
  \newtheorem{remark}[theorem]{Remark}
\begin{document}
\title{Products of independent elliptic random matrices}

\author[S. O'Rourke]{Sean O'Rourke}
\address{Department of Mathematics, University of Colorado at Boulder, Boulder, CO 80309  }
\thanks{S. O'Rourke has been supported by grant AFOSAR-FA-9550-12-1-0083}
\email{sean.d.orourke@colorado.edu}

\author[D. Renfrew]{David Renfrew} 
\thanks{D. Renfrew is partly supported by NSF grant DMS-0838680}
\address{Department of Mathematics, UCLA  }
\email{dtrenfrew@math.ucla.edu}

\author[A. Soshnikov]{Alexander Soshnikov}
\address{Department of Mathematics, University of California, Davis, One Shields Avenue, Davis, CA 95616-8633  }
\thanks{A. Soshnikov has been supported in part by NSF grant DMS-1007558}
\email{soshniko@math.ucdavis.edu}

\author{Van Vu}
\thanks{V. Vu is supported by research grants DMS-0901216, DMS-1307797, and AFOSAR-FA-9550-12-1-0083.}
\address{Department of Mathematics, Yale University, New Haven, CT 06520, USA}
\email{van.vu@yale.edu}

\begin{abstract}
For fixed $m > 1$, we study the product of $m$ independent $N \times N$ elliptic random matrices as $N$ tends to infinity. Our main result shows  that the empirical spectral distribution of the product converges, with probability $1$, to the $m$-th power of the circular law, regardless of the joint distribution of the mirror entries in each matrix. This leads to a new kind of  universality phenomenon: the limit law for the product of independent random matrices is independent of the limit laws for the individual matrices themselves.  

Our result also generalizes earlier results of  G\"otze--Tikhomirov \cite{GTprod} and O'Rourke--Soshnikov \cite{OS} concerning the product of independent iid random matrices.  \end{abstract}

\maketitle

\section{Introduction}


We begin by recalling that the \emph{eigenvalues} of a $N \times N$ matrix $\mat{M}$ are the roots in $\mathbb{C}$ of the characteristic polynomial $\det( \mat{M} - z \mat{I})$, where $\mat{I}$ is the identity matrix.  We let $\lambda_1(\mat{M}), \ldots, \lambda_N(\mat{M})$ denote the eigenvalues of $\mat{M}$.  In this case, the \emph{empirical spectral measure} $\mu_{\mat{M}}$ is given by
$$ \mu_\mat{M} := \frac{1}{N} \sum_{i=1}^N \delta_{\lambda_i(\mat{M})}. $$
The corresponding \emph{empirical spectral distribution} (ESD) is given by
$$ F^{\mat{M}}(x,y) := \frac{1}{N} \# \left\{1 \leq i \leq N: \Re(\lambda_i(\mat{M})) \leq x, \Im(\lambda_i(\mat{M})) \leq y \right\}. $$
Here $\# E$ denotes the cardinality of the set $E$.  

If the matrix $\mat{M}$ is Hermitian, then the eigenvalues $\lambda_1(\mat{M}), \ldots, \lambda_N(\mat{M})$ are real.  In this case the ESD is given by 
$$ F^{\mat{M}}(x) := \frac{1}{N} \# \left\{ 1 \leq i \leq N : \lambda_i(\mat{M}) \leq x \right\}. $$


One of the simplest random matrix ensembles is the class of random matrices with independent and identically distributed (iid) entries.  

\begin{definition}[iid random matrix]
Let $\xi$ be a complex random variable.  We say $\mat{Y}_N$ is an $N \times N$ \emph{iid random matrix} with atom variable $\xi$ if the entries of $\mat{Y}_N$ are iid copies of $\xi$.  
\end{definition}

When $\xi$ is a standard complex Gaussian random variable, $\mat{Y}_N$ can be viewed as a random matrix drawn from the probability distribution
$$ \Prob(d \mat{M}) = \frac{1}{\pi^{N^2}} e^{- \tr (\mat{M} \mat{M}^\ast)} d \mat{M} $$
on the set of complex $N \times N$ matrices.  Here $d \mat{M}$ denotes the Lebesgue measure on the $2N^2$ real entries
$$ \{ \Re(m_{ij}) : 1 \leq i, j \leq N\} \cup \{ \Im(m_{ij}) : 1 \leq i, j \leq N\} $$
of $\mat{M}=(m_{ij})_{i,j=1}^N$.  
The measure $\Prob(d \mat{M})$ is known as the {\it complex Ginibre ensemble}.  The {\it real Ginibre ensemble} is defined analogously.  Following Ginibre \cite{Gi}, one may compute the joint density of the eigenvalues of a random matrix $\mat{Y}_N$ drawn from the complex Ginibre ensemble.  Indeed, $(\lambda_1(\mat{Y}_N), \ldots, \lambda_N(\mat{Y}_N))$ has density
\begin{equation} \label{eq:ginibre}
	p_N(z_1, \ldots, z_N) := \frac{1}{\pi^N \prod_{i=1}^N k! } \exp \left( - \sum_{k=1}^N |z_k|^2 \right) \prod_{1 \leq i < j \leq N} |z_i - z_j |^2. 
\end{equation}

Mehta  \cite{M,M:B} used the joint density function \eqref{eq:ginibre} to compute the limiting spectral measure of the complex Ginibre ensemble.  In particular, he showed that if $\mat{Y}_N$ is drawn from the complex Ginibre ensemble, then the ESD of $\frac{1}{\sqrt{N}} \mat{Y}_N$ converges to the {\it circular law} $F_{\mathrm{circ}}$ as $N \to \infty$, where
$$ F_{\mathrm{circ}}(x,y) := \mu_{\mathrm{circ}} \left( \left\{ z \in \mathbb{C} : \Re(z) \leq x, \Im(z) \leq y \right\} \right) $$
and $\mu_{\mathrm{circ}}$ is the uniform probability measure on the unit disk in the complex plane.  Edelman \cite{Ed-cir} verified the same limiting distribution for the real Ginibre ensemble.

For the general (non-Gaussian) case, there is no formula for the joint distribution of the eigenvalues and the problem appears much more difficult.  The universality phenomenon in random matrix theory asserts that the spectral behavior of an iid random matrix does not depend on the distribution of the atom variable $\xi$ in the limit $N \rightarrow \infty$.  In other words, one expects that the circular law describes the limiting ESD of a large class of random matrices (not just Gaussian matrices).  

An important result was obtained by Girko \cite{G1,G2} who related the empirical spectral measure of a non-Hermitian matrix to that of a family of Hermitian matrices.  Using this Hermitization technique, Bai \cite{Bcirc,BSbook} gave the first rigorous proof of the circular law for general (non-Gaussian) distributions.  He proved the result under a number of moment and smoothness assumptions on the atom variable $\xi$, and a series of recent improvements were obtained by  G\"otze and Tikhomirov \cite{GTcirc},  
Pan and Zhou \cite{PZ} and  Tao and Vu \cite{TVcirc, TVesd}.   In particular,  Tao and Vu \cite{TVbull,TVesd} established the law with the minimum assumption  that $\xi$ has finite variance.

\begin{theorem}[Tao-Vu, \cite{TVesd}] \label{thm:tvcirc}
Let $\xi$ be a complex random variable with mean zero and unit variance.  For each $N \geq 1$, let $\mat{Y}_N$ be a $N \times N$ iid random matrix with atom variable $\xi$. Then the ESD of $\frac{1}{\sqrt{N}} \mat{Y}_N$ converges almost surely to the circular law $F_{\mathrm{circ}}$ as $N \rightarrow \infty$.  
\end{theorem}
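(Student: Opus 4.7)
The plan is to use Girko's Hermitization method. Write $\mat{A}_N := \frac{1}{\sqrt{N}} \mat{Y}_N$ and let $\nu_{N,z}$ denote the empirical singular value distribution of $\mat{A}_N - z\mat{I}$. Since
\[ U_{\mu_{\mat{A}_N}}(z) := -\int_\C \log|z - w| \, d\mu_{\mat{A}_N}(w) = -\int_0^\infty \log x \, d\nu_{N,z}(x), \]
and a probability measure on $\C$ with finite logarithmic energy is determined by its logarithmic potential, it suffices to show that for Lebesgue-almost every $z \in \C$ the right-hand side converges almost surely to $-U_{\mu_{\mathrm{circ}}}(z)$. As a preliminary reduction, I would truncate the entries of $\mat{Y}_N$ at a slowly growing level $N^{\eps}$ and use the Hoffman-Wielandt inequality for singular values to absorb the error, reducing to the case of bounded entries with essentially the same first two moments.

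For fixed $z$, proving weak convergence of $\nu_{N,z}$ to an explicit deterministic $\nu_z$ (whose log-moment equals $-U_{\mu_{\mathrm{circ}}}(z)$) goes through the Stieltjes transform of the $2N \times 2N$ Hermitian dilation
\[ \mat{H}_N(z) = \begin{pmatrix} 0 & \mat{A}_N - z\mat{I} \\ (\mat{A}_N - z\mat{I})^\ast & 0 \end{pmatrix}. \]
Standard resolvent-perturbation arguments yield a self-consistent vector equation whose unique stable solution identifies the limit; this step needs only mean zero and unit variance and is by now routine. One then checks explicitly that this $\nu_z$ produces the logarithmic potential $-U_{\mu_{\mathrm{circ}}}(z)$ of the circular law.

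The main obstacle, as in every proof of the circular law, is the singularity of $\log x$ at $x = 0$: weak convergence of $\nu_{N,z}$ does not by itself imply convergence of logarithmic moments. What is needed is \emph{uniform integrability} of $\log x$ against $\nu_{N,z}$, which splits into (a) a quantitative lower bound on the least singular value $\sigma_{\min}(\mat{A}_N - z \mat{I}) \geq N^{-A}$ with high probability, and (b) an upper bound on the number of singular values of $\mat{A}_N - z\mat{I}$ in a window $[N^{-A}, N^{-\eps}]$ just above zero. Part (a) is the technical heart of the argument and is the crucial new input under the minimal finite-variance hypothesis used here; it rests on Rudelson-Vershynin-type small-ball probability estimates applied to linear combinations of iid entries with no smoothness assumption, and is precisely the obstruction that previous authors addressed only under extra moment or density hypotheses. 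Part (b) follows from Stieltjes transform concentration at appropriately chosen imaginary scales.

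Finally, combining the weak convergence of $\nu_{N,z}$ with the uniform integrability supplied by (a) and (b) gives convergence in probability of $\int_0^\infty \log x \, d\nu_{N,z}(x)$ to $-U_{\mu_{\mathrm{circ}}}(z)$ for almost every $z$. A Borel-Cantelli argument, using that the above estimates hold with polynomial or better failure probability along a sufficiently dense subsequence of $N$, upgrades this to almost-sure convergence. The continuity theorem for logarithmic potentials then yields $F^{\mat{A}_N} \to F_{\mathrm{circ}}$ almost surely, as claimed.
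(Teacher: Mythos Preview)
The paper does not prove this theorem; it is stated as a cited background result due to Tao and Vu \cite{TVesd}, and no proof or sketch is given anywhere in the paper. So there is no ``paper's own proof'' to compare your proposal against.

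That said, your outline is a faithful summary of the Tao--Vu strategy: Hermitization via logarithmic potentials, convergence of the singular value laws $\nu_{N,z}$ through the Stieltjes transform of the Hermitian dilation, and---the decisive point---uniform integrability of $\log$ near zero, obtained from a least-singular-value bound $\sigma_{\min}(\mat{A}_N - z\mat{I}) \geq N^{-A}$ together with control on the count of small singular values. You correctly identify that the least-singular-value input under only a second-moment hypothesis is the new ingredient in \cite{TVesd}. The only caveat is that the upgrade to almost-sure convergence in \cite{TVesd} is not done via a subsequence plus Borel--Cantelli as you suggest, but via a Fubini-type argument (for a.e.\ $z$, a.s.) combined with the replacement principle; your sketch is close enough in spirit but slightly imprecise on that last step.
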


More recently, G\"{o}tze and Tikhomirov \cite{GTprod} consider the ESD of the product of $m$ independent iid random matrices.  They show that, as the sizes of the matrices tend to infinity, the limiting distribution is given by $F_m$, where $F_m$ is supported on the unit circle in the complex plane and has density $f_m$ given by
\begin{equation}
\label{plotnost}
	f_m(z) := \left\{
     			\begin{array}{ll}
       				\frac{1}{m \pi} |z|^{\frac{2}{m}-2}, & \text{ for } |z|\leq 1 \\
       				0, & \text{ for } |z| > 1
     			\end{array} \right.
\end{equation}
in the complex plane.  It can be verified directly, that if $\psi$ is a random variable distributed uniformly on the unit disk in the complex plane, then $\psi^m$ has distribution $F_m$.  

\begin{theorem}[G\"{o}tze-Tikhomirov, \cite{GTprod}] \label{thm:GTprod}
Let $m \geq 1$ be an interger, and assume $\xi_1, \ldots, \xi_m$ are complex random variables with mean zero and unit variance.  For each $N \geq 1$ and $1 \leq k \leq m$, let $\mat{Y}_{N,k}$ be an $N \times N$ iid random matrix with atom variable $\xi_k$, and assume $\mat{Y}_{N,1}, \ldots, \mat{Y}_{N,m}$ are independent.  Define the product 
$$ \mat{P}_N := N^{-m/2} \mat{Y}_{N,1} \cdots \mat{Y}_{N,m}. $$
Then $\E F^{\mat{P}_N}$ converges to $F_m$ as $N \to \infty$.  
\end{theorem}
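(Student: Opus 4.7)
My plan is to follow Girko's Hermitization strategy adapted to the product structure, together with a block-matrix linearization of the product.

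First, I would pass from the ESD to its logarithmic potential. For every $z \in \mathbb{C}$, one has
$$ U_{\mathbf{P}_N}(z) := \int_{\mathbb{C}} \log|w-z| \, dF^{\mathbf{P}_N}(w) = \frac{1}{2N} \log \det\bigl( (\mathbf{P}_N - z\mathbf{I})(\mathbf{P}_N - z\mathbf{I})^{\ast} \bigr) = \int_0^\infty \log \sqrt{x} \, d\nu_N^z(x), $$
where $\nu_N^z$ is the empirical distribution of the squared singular values of $\mathbf{P}_N - z\mathbf{I}$. Since $F^{\mathbf{P}_N}$ is recovered from $U_{\mathbf{P}_N}$ as $\tfrac{1}{2\pi}\Delta U_{\mathbf{P}_N}$ in the distributional sense, weak convergence of $\E F^{\mathbf{P}_N}$ will follow from proving that $\E U_{\mathbf{P}_N}(z)$ converges, for almost every $z$, to the log-potential $U_{F_m}(z) = \int \log|w-z|\,dF_m(w)$, provided we have enough regularity to pass to the limit inside the integral.

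Second, I would linearize the product via the $(mN)\times(mN)$ block matrix
$$ \mathbf{V}_N(z) := \begin{pmatrix} -z\mathbf{I} & N^{-1/2}\mathbf{Y}_{N,1} & & & \\ & -\mathbf{I} & N^{-1/2}\mathbf{Y}_{N,2} & & \\ & & \ddots & \ddots & \\ & & & -\mathbf{I} & N^{-1/2}\mathbf{Y}_{N,m-1} \\ N^{-1/2}\mathbf{Y}_{N,m} & & & & -\mathbf{I} \end{pmatrix}, $$
whose determinant is, up to sign, $\det(\mathbf{P}_N - z\mathbf{I})$ (expand the block structure via Schur complements on the final block row/column). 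The singular values of $\mathbf{V}_N(z)$ are encoded in the spectrum of the $(2mN)\times(2mN)$ Hermitization
$\mathbf{H}_N(z,\eta) := \begin{pmatrix} i\eta \mathbf{I} & \mathbf{V}_N(z) \\ \mathbf{V}_N(z)^{\ast} & i\eta\mathbf{I}\end{pmatrix}$, and standard Stieltjes-transform machinery (resolvent expansion, Schur complement formulas, the McKay/Ward identity, and concentration inequalities for quadratic forms in independent entries) applied block by block should yield a closed system of self-consistent equations for the block-resolvent. This system, after taking $N \to \infty$, depends only on the first two moments of each $\xi_k$, which is the source of universality.

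Third, I would solve the resulting fixed-point system to extract the limiting squared singular value distribution $\nu^z$. Here, one can check directly by computing moments (or by invoking the $S$-transform identity $S_{ab}=S_a S_b$ for free multiplicative convolution, applied to $m$ free circular elements) that $\nu^z$ is precisely the squared singular value distribution of $\psi - z$, where $\psi$ has law $F_m$. Then $\int \log\sqrt{x}\,d\nu^z(x) = \int \log|w-z|\,dF_m(w)$, which identifies the limit.

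The main obstacle will be the uniform integrability of $\log$ at the origin when passing $\int \log\sqrt{x}\,d\nu_N^z(x) \to \int \log\sqrt{x}\,d\nu^z(x)$, since the logarithmic singularity can be amplified by small singular values of $\mathbf{P}_N - z\mathbf{I}$. For each $z \neq 0$, I would need a polynomial lower bound in expectation on the least singular value of $\mathbf{V}_N(z)$ (equivalently of $\mathbf{P}_N - z\mathbf{I}$), together with an upper tail bound on the operator norm, so that $\E \int_0^{\varepsilon} |\log x|\,d\nu_N^z(x) \to 0$ uniformly in $N$ as $\varepsilon \to 0$. For finite-variance $\xi_k$ only, the smallest-singular-value estimate is the delicate point and is the reason universal results require care; once this is available (for instance by iterating known least-singular-value bounds of Tao--Vu type, applied separately to each factor and combined with the block structure), the rest of the argument is a routine convergence-of-measures computation.
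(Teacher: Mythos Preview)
The theorem you are asked to prove is quoted in the paper as a result of G\"otze--Tikhomirov and is not proved there; the paper instead proves the stronger Theorems~\ref{thm:simple} and~\ref{thm:main} (for elliptic rather than iid matrices, and with almost sure convergence), whose proof specializes to give Theorem~\ref{thm:GTprod}. Your outline is sound and close in spirit to that proof: both rely on Girko's Hermitization, a block linearization of the product, a matrix-valued self-consistent equation for the block resolvent, and a least-singular-value bound to control the logarithmic singularity.

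The genuine difference lies in the linearization. You take a $z$-dependent block matrix $\mathbf{V}_N(z)$ with $-z\mathbf{I}$ in one diagonal block and $-\mathbf{I}$ in the others, chosen so that $|\det\mathbf{V}_N(z)|=|\det(\mathbf{P}_N-z\mathbf{I})|$, and then analyze the singular values of $\mathbf{V}_N(z)$ for each $z$ separately. The paper instead works with the $z$-\emph{independent} cyclic block matrix $\mat{Z}_N=\frac{1}{\sqrt N}\mat{Y}_N$ and first proves (Theorem~\ref{thm:circular}) that the ESD of $\mat{Z}_N$ itself converges to the \emph{circular} law; since $\mat{Z}_N^m$ is block diagonal with each diagonal block having the same eigenvalues as $\mat{P}_N$, the law $F_m$ then follows immediately from the change of variables $z\mapsto z^m$. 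This trick is cleaner in two respects: the limiting fixed-point equation is simply the one for the circular law (so your separate identification step via moments or the $S$-transform is avoided), and the Hermitized object is $\mat{Z}_N-z\mat{I}$ with $z$ appearing uniformly on the diagonal, which keeps the block analysis symmetric. Your route is closer to the original G\"otze--Tikhomirov argument and is perfectly workable, but it requires you to solve and identify a more complicated limiting singular-value law $\nu^z$ directly. Both approaches need essentially the same least-singular-value input, and your identification of that as the delicate point is correct; note, though, that what is needed is a polynomial tail bound on $\Prob(\sigma_{\min}\le N^{-A})$ rather than a bound ``in expectation'' as you wrote.
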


The convergence of $F^{\mat{P}_N}$ to $F_m$ in Theorem \ref{thm:GTprod} was strengthened to almost sure convergence in \cite{B,OS}.  The Gaussian case was originally considered by Burda, Janik, and Waclaw \cite{BJW}; see also \cite{Bsurv}.  We refer the reader to \cite{AB,ABK, AIK, AIK2, AKW, AS, BJLNS, F, F2} and references therein for many other interesting results concerning products of Gaussian random matrices.

\section{New results}

In this paper, we generalize Theorem \ref{thm:GTprod} by considering products of independent real elliptic random matrices.  Elliptic random matrices were originally introduced by Girko \cite{Gorig, Gten} in the 1980s.

\begin{definition}[Real elliptic random matrix] \label{def:elliptic}
Let $(\xi_1, \xi_2)$ be a random vector in $\mathbb{R}^2$, and let $\zeta$ be a real random variable.  We say $\mat{Y}_N = (y_{ij})_{i,j=1}^N$ is a $N \times N$ \emph{real elliptic random matrix} with atom variables $(\xi_1,\xi_2), \zeta$ if the following conditions hold.  
\begin{itemize}
\item (independence) $\{ y_{ii} : 1 \leq i \leq N\} \cup \{ (y_{ij}, y_{ji}) : 1 \leq i < j \leq N\}$ is a collection of independent random elements.
\item (off-diagonal entries) $\{ (y_{ij}, y_{ji}) : 1 \leq i < j \leq N\}$ is a collection of iid copies of $(\xi_1,\xi_2)$.
\item (diagonal entries) $\{y_{ii} : 1 \leq i \leq N\}$ is a collection of iid copies of $\zeta$.  
\end{itemize}
\end{definition}

Real elliptic random matrices generalize iid random matrices.  Indeed, if $\xi_1, \xi_2, \zeta$ are iid, then $\mat{Y}_N$ is just an iid random matrix.  On the other hand, if $\xi_1 = \xi_2$ almost surely, then $\mat{Y}_N$ is a real symmetric matrix.  In this case, the eigenvalues of $\mat{Y}_N$ are real and $\mat{Y}_N$ is known as a \emph{real symmetric Wigner matrix} \cite{W}.  

Suppose $\xi_1, \xi_2$ have mean zero and unit variance.  Set $\rho := \E[\xi_1 \xi_2]$.  When $|\rho| < 1$ and $\zeta$ has mean zero and finite variance, it was shown in \cite{NgO} that the ESD of $\frac{1}{\sqrt{N}} \mat{Y}_N$ converges almost surely to the elliptic law $F_\rho$ as $N \to \infty$, where 
$$ F_\rho(x,y) = \mu_{\rho}( \{z \in \mathbb{C} : \Re(z) \leq x, \Im(z) \leq y\}) $$
and $\mu_\rho$ is the uniform probability measure on the ellipsoid 
$$ \mathcal{E}_{\rho} = \left\{ z \in \C : \frac{\Re(z)^2}{(1+\rho)^2} +\frac{\Im(z)^2}{(1-\rho)^2} < 1 \right\} .$$
This is a natural generalization of the circular law (Theorem \ref{thm:tvcirc}).  Figure \ref{fig:elliptic} displays a numerical simulation of the eigenvalues of a real elliptic random matrix.  

\begin{figure} 
	\begin{center}
	\subfigure{
	\includegraphics[width=9cm]{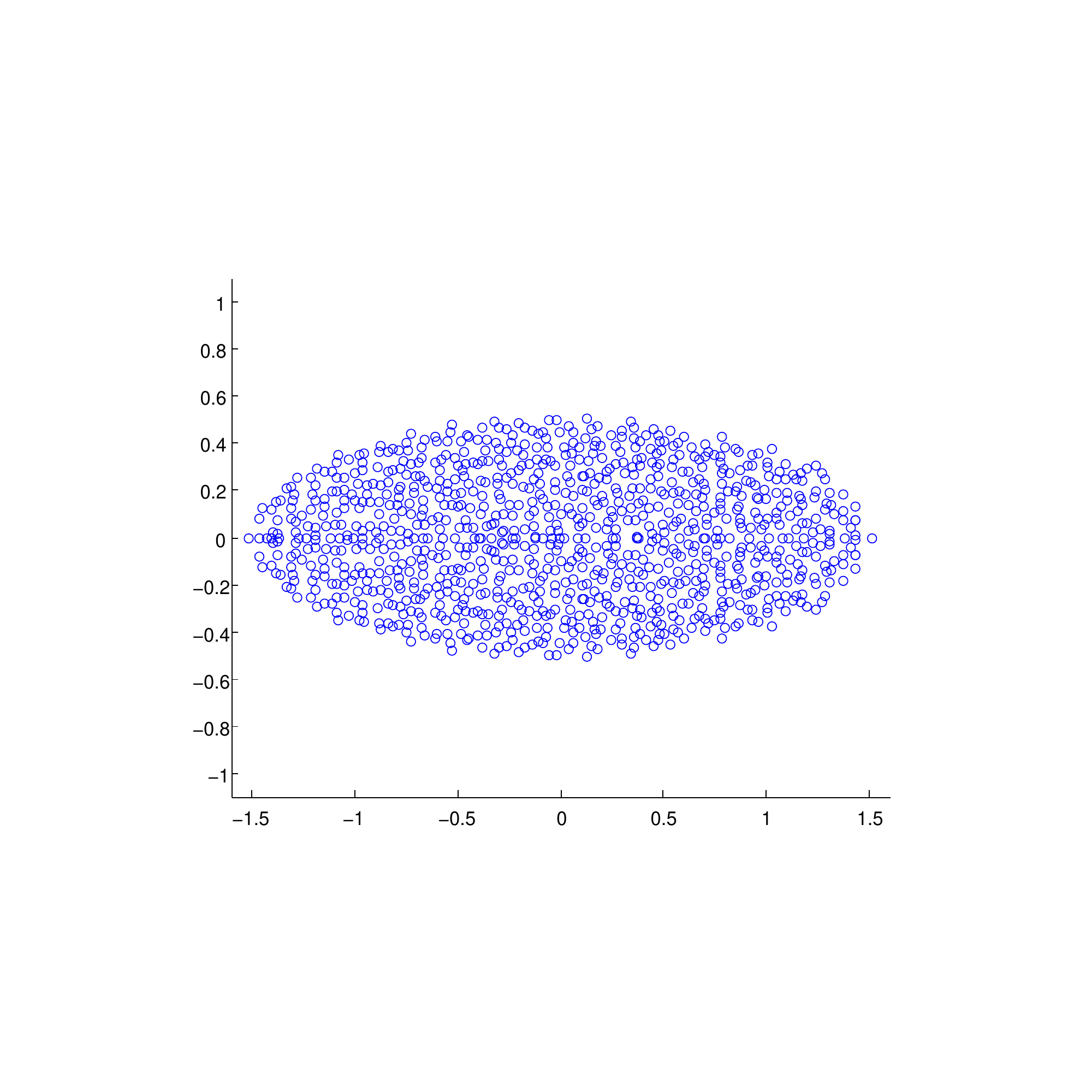}}
   	\caption{The eigenvalues of a $1000 \times 1000$ elliptic random matrix with Gaussian entries when $\rho=1/2$.} 
   	\label{fig:elliptic}
 	\end{center}
\end{figure}

In this note, we consider the product $\mat{Y}_{N,1} \cdots \mat{Y}_{N,m}$ of $m$ independent real elliptic random matrices.  In particular, we assume each real elliptic random matrix $\mat{Y}_{N,k}$ has atom variables $(\xi_{k,1}, \xi_{k,2}), \zeta_k$ which satisfy the following conditions.  


\begin{assumption} \label{ass:atom}
There exists $\tau > 0$ such that the following conditions hold.
\begin{enumerate}
\item $\xi_{k,1}, \xi_{k,2}$ both have mean zero and unit variance.  
\item $\E| \xi_{k,1}|^{2 + \tau} + \E|\xi_{k,2}|^{2 + \tau} < \infty$. \label{item:tau}
\item $\rho_k := \E[\xi_{k,1} \xi_{k,2}]$ satisfies $|\rho_k| < 1$. \label{item:rhoass}
\item $\zeta_k$ has mean zero and finite variance.
\end{enumerate}
\end{assumption}

In our main result below, we show that the limiting distribution $F_m$ (with density given by \eqref{plotnost}) from Theorem \ref{thm:GTprod} for the product of independent iid random matrices is also the limiting distribution for the product of independent elliptic random matrices.  In other words, the limit law for the product of independent random matrices is independent of the limit laws for the individual matrices themselves.  This type of universality was first considered by Burda, Janik, and Waclaw in \cite{BJW} for matrices with Gaussian entries; see also \cite{Bsurv}.  Figure \ref{fig:plots} displays several numerical simulations which illustrate this phenomenon.  

\begin{theorem} \label{thm:simple}
Let $m > 1$ be an integer.  For each $1 \leq k \leq m$, let $(\xi_{k,1}, \xi_{k,2}), \zeta_k$ be real random elements that satisfy Assumption \ref{ass:atom}.  For each $N \geq 1$ and $1 \leq k \leq m$, let $\mat{Y}_{N,k}$  be an $N \times N$ real elliptic random matrix with atom variables $(\xi_{k,1}, \xi_{k,2}), \zeta_k$, and assume $\mat{Y}_{N,1}, \ldots, \mat{Y}_{N,m}$ are independent.  Then the ESD of the product
$$ \mat{P}_N := N^{-m/2} \mat{Y}_{N,1} \cdots \mat{Y}_{N,m} $$
converges almost surely to $F_m$ (with density given by \eqref{plotnost}) as $N \to \infty$.
\end{theorem}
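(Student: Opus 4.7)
The strategy is Girko's Hermitization combined with a block linearization. To prove almost sure weak convergence $F^{\mat{P}_N} \to F_m$, it suffices (by Girko) to establish, for Lebesgue-a.e. $z \in \mathbb{C}$:
(a) almost sure weak convergence of the symmetrized empirical singular value distribution $\nu_N^z$ of $\mat{P}_N - z\mat{I}$ to a deterministic limit $\nu^z$ whose logarithmic potential equals $-\int \log|w - z| \, dF_m(w)$;
(b) uniform integrability of $\log(\cdot)$ against $\nu_N^z$, which amounts to a polynomial lower bound on $\sigma_N(\mat{P}_N - z\mat{I})$ together with control on the density of intermediate small singular values.
Preliminarily, I would truncate the atom variables using Assumption \ref{ass:atom}\eqref{item:tau} so that all entries may be taken bounded while only perturbing $\nu_N^z$ in a negligible way.

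For (a), the natural device is the block-bidiagonal linearization
\begin{equation*}
\mat{L}_N \;=\; N^{-1/2} \begin{pmatrix} 0 & \mat{Y}_{N,1} & & \\ & 0 & \ddots & \\ & & \ddots & \mat{Y}_{N,m-1} \\ \mat{Y}_{N,m} & & & 0 \end{pmatrix},
\end{equation*}
whose $m$-th power has $\mat{P}_N$ (and its cyclic conjugates) on the diagonal blocks. By Schur complement, $\det(\mat{L}_N - w\mat{I}_{mN})$ factors through $\det(\mat{P}_N - w^m \mat{I}_N)$, so studying $\nu_N^z$ reduces to studying the symmetrized spectrum of the Hermitization of $\mat{L}_N - w\mat{I}$ for $w$ with $w^m = z$. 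I would derive a matrix-valued self-consistent equation for the block resolvent using standard resolvent expansions, noting that the different factors $\mat{Y}_{N,k}$ are independent across $k$ (so cross-block interactions decouple) while within-block correlations contribute $\rho_k$-dependent terms only through diagonal entries of the resolvent. These correction terms are $O(1/N)$, so in the limit the self-consistent system reduces to the one already solved in \cite{GTprod, OS} for the iid case, forcing $\nu^z$ to produce the density \eqref{plotnost}.

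For (b), I would combine existing least-singular-value estimates for real elliptic matrices (polynomial lower bounds of the form $\sigma_N(N^{-1/2}\mat{Y}_{N,k} - w\mat{I}) \geq N^{-C}$ with overwhelming probability for fixed $w \in \mathbb{C}$) with an inductive argument on $m$ that exploits the independence of the $\mat{Y}_{N,k}$. A convenient reduction: condition on $\mat{Y}_{N,2},\ldots,\mat{Y}_{N,m}$ and use the identity $\mat{P}_N - z\mat{I} = (N^{-1/2}\mat{Y}_{N,1})(N^{-(m-1)/2}\mat{Y}_{N,2}\cdots\mat{Y}_{N,m}) - z\mat{I}$ to pass from the single-factor bound to the product, absorbing the (conditionally deterministic) second factor via an appropriate analogue of the $\eps$-net / small-ball argument used in \cite{OS}. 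Intermediate singular values are then controlled by the bulk convergence from step (a), giving the quantitative decay $\#\{i : \sigma_i(\mat{P}_N - z\mat{I}) \leq t\} \leq N t^c$ needed for uniform integrability.

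The main obstacle is verifying that the within-matrix elliptic correlations truly wash out in the self-consistent equation for the Hermitization of $\mat{L}_N - w\mat{I}$: this requires carefully tracking how $\rho_k$ appears when performing the Schur complement on each $2\times 2$ mirror pair of rows/columns, and showing that all surviving $\rho_k$-dependent terms are concentrated on diagonal blocks, where they are suppressed by a factor of $1/N$ relative to the off-diagonal contributions. A secondary difficulty is propagating the least singular value bound through the product uniformly in $z$ on compact sets, since a naive union bound over an $\eps$-net interacts poorly with the polynomial (rather than exponential) tail of the least singular value estimate for elliptic matrices; this will likely require a deterministic Lipschitz-type comparison of $\sigma_N(\mat{P}_N - z\mat{I})$ in $z$ using the operator norm of $\mat{P}_N$, which is itself controlled by the $2+\tau$ moment hypothesis.
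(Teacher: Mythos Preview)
Your overall architecture matches the paper's: linearize to the $mN \times mN$ block-cyclic matrix $\mat{L}_N$, prove the circular law for $\mat{L}_N$ via Hermitization and a $2m \times 2m$ matrix-valued Stieltjes transform, and deduce the $F_m$ law for $\mat{P}_N$ by the change of variables $z \mapsto z^m$. The least singular value is likewise handled in both by conditioning on all but one factor and invoking the single-matrix elliptic bound.

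There is, however, a genuine gap in your mechanism for why the correlations $\rho_k$ disappear. You assert that the $\rho_k$-dependent contributions are $O(1/N)$ and therefore vanish in the limit. They are not: in the self-consistent equation $\mat{\Gamma} = -(\mat{q} + \Sigma(\mat{\Gamma}))^{-1}$ the operator $\Sigma$ has the form $\Sigma(\mat{A})_{ab} = \mat{A}_{a'a'}\delta_{ab} + \rho_a \mat{A}_{a'a}\delta_{a'b}$ (with $a'$ the unique column index of the nonzero block in row $a$ of the Hermitization), and the $\rho_a$ piece is an $O(1)$ term. What actually kills it is \emph{structural}: because $m > 1$, the index $a'$ is never equal to $a$ or to $a \pm m$, so the entry $\mat{\Gamma}_{a'a}$ of the limiting matrix---which has the four-block pattern $\bigl(\begin{smallmatrix} a(\mat{q})\mat{I}_m & *\,\mat{I}_m \\ *\,\mat{I}_m & a(\mat{q})\mat{I}_m \end{smallmatrix}\bigr)$---sits at a position that is identically zero. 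Hence $\rho_a \mat{\Gamma}_{a'a} = 0$ and the fixed point coincides with the iid ($\rho_k=0$) one. Note that this argument genuinely fails at $m=1$, where $a' = a \pm 1$ hits a nonzero entry and one recovers the elliptic law; your $O(1/N)$ heuristic would wrongly predict the circular law even for a single elliptic matrix.

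Two smaller points. For the least singular value, your factorization $\mat{P}_N - z\mat{I} = (N^{-1/2}\mat{Y}_{N,1})\mat{M} - z\mat{I}$ is not yet of the additive form $\mat{Y}_{N,1} + \mat{F}$ that the elliptic small-ball estimate accepts; the paper works with the blocks of $(\mat{L}_N - w\mat{I})^{-1}$ and uses the rewriting $(\mat{V}_{1}\cdots \mat{V}_{q} - z^r)^{-1} = \mat{V}_{q}^{-1}\cdots \mat{V}_{2}^{-1}\bigl(\mat{V}_{1} - z^r \mat{V}_{q}^{-1}\cdots \mat{V}_{2}^{-1}\bigr)^{-1}$, then conditions on $\mat{V}_{2},\ldots,\mat{V}_{q}$ to obtain an additive perturbation with polynomially bounded entries. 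Second, your worry about uniformity in $z$ for the least singular value is unnecessary: the Hermitization argument needs the bound only for Lebesgue-a.e.\ fixed $z$, after which Fubini swaps the ``a.e.\ $z$'' and ``a.s.'' quantifiers.
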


More generally, we establish a version of Theorem \ref{thm:simple} where each elliptic random matrix $\mat{Y}_{N,k}$ is perturbed by a deterministic, low rank matrix $\mat{A}_{N,k}$ with small Hilbert-Schmidt norm.  In fact, Theorem \ref{thm:simple} will follow from Theorem \ref{thm:main} below.  We recall that, for any $m \times n$ matrix $\mat{M}$, the Hilbert-Schmidt norm $\|\mat{M}\|_2$ is given by the formula
\begin{equation} \label{eq:def:hs}
	\|\mat{M}\|_2 := \sqrt{ \tr (\mat{M} \mat{M}^\ast) } = \sqrt{ \tr (\mat{M}^\ast \mat{M})}. 
\end{equation}

\begin{theorem} \label{thm:main}
Let $m > 1$ be an integer.  For each $1 \leq k \leq m$, let $(\xi_{k,1}, \xi_{k,2}), \zeta_k$ be real random elements that satisfy Assumption \ref{ass:atom}.  For each $N \geq 1$ and $1 \leq k \leq m$, let $\mat{Y}_{N,k}$  be an $N \times N$ real elliptic random matrix with atom variables $(\xi_{k,1}, \xi_{k,2}), \zeta_k$, and assume $\mat{Y}_{N,1}, \ldots, \mat{Y}_{N,m}$ are independent.  For each $1 \leq k \leq m$, let $\mat{A}_{N,k}$ be a $N \times N$ deterministic matrix, and assume
\begin{equation} \label{eq:Aassump}
	\max_{1 \leq k \leq m} \rank(\mat{A}_{N,k}) = O(N^{1-\eps}) \quad \text{and} \quad \sup_{N \geq 1} \max_{1 \leq k \leq m} \frac{1}{N^2} \| \mat{A}_{N,k} \|_2 < \infty, 
\end{equation}
for some $\eps > 0$.  Then the ESD of the product
\begin{equation} \label{def:PN}
	\mat{P}_N := N^{-m/2} \prod_{k=1}^m ( \mat{Y}_{N,k} + \mat{A}_{N,k}) 
\end{equation}
converges almost surely to $F_m$ (with density given by \eqref{plotnost}) as $N \to \infty$.
\end{theorem}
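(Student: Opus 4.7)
The plan is to adapt the Hermitization strategy of Tao--Vu \cite{TVesd} and O'Rourke--Soshnikov \cite{OS} to the elliptic product setting. By the logarithmic potential method, almost sure convergence $F^{\mat{P}_N} \Rightarrow F_m$ reduces to showing, for Lebesgue-a.e.\ $z \in \C$,
$$ \frac{1}{N} \log |\det(\mat{P}_N - z\mat{I})| = \int_0^\infty \log t \, d\nu_{N,z}(t) \longrightarrow \int_0^\infty \log t \, d\nu_z(t) \quad \text{a.s.}, $$
where $\nu_{N,z}$ is the empirical singular value measure of $\mat{P}_N - z\mat{I}$ and $\nu_z$ is the deterministic limit associated to $F_m$. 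To exploit the product structure, I would linearize by introducing the $mN \times mN$ cyclic block matrix $\mat{L}_N$ whose $k$-th off-diagonal block is $\mat{X}_{N,k} := N^{-1/2}(\mat{Y}_{N,k} + \mat{A}_{N,k})$, so that the non-zero eigenvalues of $\mat{L}_N^m$ are exactly those of $\mat{P}_N$. Since $F_m$ is the law of $U^m$ for $U$ uniform on the unit disk, the problem becomes one of showing that the ESD of $\mat{L}_N$ converges almost surely to the circular law.

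\textbf{Replacement and absorption of the perturbations.} The Hermitization of $\mat{L}_N - z\mat{I}$ has a block structure with independent elliptic blocks. To establish universality, I would apply a Lindeberg-style replacement principle, swapping the pairs $(y_{ij,k}, y_{ji,k})$ one at a time with Gaussian pairs of the same covariance and using the third-moment control in Assumption~\ref{ass:atom}\eqref{item:tau}; pair-independence across $(i,j)$ and $k$ is preserved at every step. After reduction to Gaussian elliptic matrices, the limit can be identified either via a matrix-valued Dyson equation (as in \cite{NgO} for a single elliptic matrix) or by observing that the free product of $m$ independent elliptic elements with $|\rho_k|<1$ has the same Brown measure as the product of $m$ free circular elements, namely $F_m$ — morally, independence between the distinct factors washes out the within-factor correlation $\rho_k$ in the product. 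The low-rank deterministic perturbations $\mat{A}_{N,k}$ are absorbed using \eqref{eq:Aassump}: the rank bound $O(N^{1-\eps})$ allows Cauchy interlacing to shift only $o(N)$ singular values of $\mat{L}_N - z\mat{I}$, while the Hilbert--Schmidt bound controls the remainder's contribution to the log-integral.

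\textbf{Main obstacle: small singular values.} The principal technical difficulty is a polynomial lower bound $\sigma_{\min}(\mat{L}_N - z\mat{I}) \geq N^{-C}$, together with control on the number of singular values in $[0, N^{-A}]$, with probability $1 - o(1)$ uniformly for $z$ in a compact set. Without such a bound, the logarithmic singularity at $t = 0$ prevents interchange of limit and integral in $\int \log t \, d\nu_{N,z}(t)$, and the replacement scheme cannot be closed. For iid matrices this was achieved via Littlewood--Offord/small-ball estimates; here the within-matrix correlation between $(y_{ij,k}, y_{ji,k})$ forces one to use small-ball estimates for $2$-dimensional vectors, which is precisely the place where the non-degeneracy hypothesis $|\rho_k| < 1$ is essential. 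Combining such a $2$-dimensional small-ball bound with an $\eps$-net argument on $\C^{mN}$, and propagating the bound through the $m$ factors (a small singular value of the product or its linearization forces a near-null vector in a specific factor), should yield the required estimate, after which the proof closes by combining the universality step, the perturbation analysis, and the explicit identification of the Gaussian limit.
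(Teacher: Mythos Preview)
Your high-level architecture matches the paper's: linearize to the $mN\times mN$ cyclic block matrix, reduce Theorem~\ref{thm:main} to the circular law for the linearization (the paper's Theorem~\ref{thm:circular}), and establish the latter by Hermitization. The two technical pillars are implemented differently, though.

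For universality, the paper does not use Lindeberg replacement. It works directly with a $2m\times 2m$ matrix-valued Stieltjes transform $\mat{\Gamma}_N(\mat q)$ (the partial trace over $N\times N$ blocks of the Hermitized resolvent), proves concentration of $\mat{\Gamma}_N$ around its mean by a martingale/Burkholder argument, and shows via Schur complement that $\E\mat{\Gamma}_N$ approximately solves the fixed-point equation $\mat{\Gamma}=-(\mat q+\mat\Sigma(\mat\Gamma))^{-1}$. The correlations $\rho_k$ enter $\mat\Sigma$ but drop out of the unique solution, which explains directly why the limit is the circular law regardless of $\rho_k$. Your Lindeberg route is viable, but your fallback to Brown-measure identification is risky: the paper itself remarks that Brown measure is discontinuous, so matching Brown measures of the limiting operators does not by itself deliver ESD convergence.

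The more substantive divergence is the least-singular-value step, and here your sketch is the weakest. The paper does \emph{not} attempt small-ball estimates on the sparse block matrix $\mat L_N-z\mat I$. Instead it writes the blocks of $(\mat L_N-z\mat I)^{-1}$ explicitly as $z^\kappa \mat V_{j_1}\cdots\mat V_{j_l}(\mat V_{i_1}\cdots\mat V_{i_q}-z^r\mat I)^{-1}$ with the $i_s$ distinct, rewrites the last factor as $\mat V_{i_q}^{-1}\cdots\mat V_{i_2}^{-1}\bigl(\mat V_{i_1}-z^r\mat V_{i_q}^{-1}\cdots\mat V_{i_2}^{-1}\bigr)^{-1}$, conditions on $\mat V_{i_2},\ldots,\mat V_{i_q}$, and invokes the single-matrix least-singular-value bound for a perturbed elliptic matrix (Theorem~\ref{thm:ellipticsv}, from \cite{NgO}) on the remaining factor. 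This is precisely where $|\rho_k|<1$ is used. Your proposed direct net/small-ball argument on $\C^{mN}$ would have to contend with the fact that each row of $\mat L_N-z\mat I$ has only $N$ random entries sitting in a single block, and your heuristic ``a small singular value of the product forces a near-null vector in a specific factor'' is not correct as stated (all factors can be well-conditioned while the product is nearly singular). The paper's conditioning trick is what makes this step go through cleanly.
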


\begin{remark}
We conjecture that items \eqref{item:tau} and \eqref{item:rhoass} from Assumption \ref{ass:atom} are not required for Theorem \ref{thm:main} to hold.  Indeed, in view of Theorem \ref{thm:tvcirc} and \cite{NgO}, it is natural to conjecture that $\xi_1, \xi_2$ need only have two finite moments.  Also, our proof of Theorem \ref{thm:main} can almost be completed under the assumption that $-1 < \rho_k \leq 1$.  We only require that $\rho_k \neq 1$ in Section \ref{sec:singularvalue} in order to control the least singular value of matrices of the form $\mat{Y}_{N,k} + \mat{F}_{N,k}$, where $\mat{F}_{N,k}$ is a deterministic matrix whose entries are bounded by $N^{\alpha}$, for some $\alpha > 0$.  See Remark \ref{rem:lsv} and Theorem \ref{thm:prodWigner} below for further details.  
\end{remark}

\begin{remark}
Among other things, the perturbation by $\mat{A}_{N,k}$ in Theorem \ref{thm:main} allows one to consider elliptic random matrices with nonzero mean.  Indeed, let $\mu_k$ be a real number, and assume each entry of $\mat{A}_{N,k}$ takes the value $\mu_k$.  Then $\mat{Y}_{N,k} + \mat{A}_{N,k}$ is an elliptic random matrix whose atom variables have mean $\mu_k$.  
\end{remark}

\begin{remark}
In \cite{GNT}, a result similar to Theorem \ref{thm:simple} is proved under a different set of assumptions.
\end{remark}

As noted above, when $\rho_k = 1$, the matrix $\mat{Y}_{N,k}$ is known as a real symmetric Wigner matrix.  Theorem \ref{thm:main} requires that $|\rho_k| < 1$, but in the special case when $m=2$, we are able to extend our proof to show that the same result holds for the product of two independent real symmetric Wigner matrices.  

\begin{theorem} \label{thm:prodWigner}
Let $\xi_{1,1}, \xi_{2,1}$ be real random variables with mean zero and unit variance, and which satisfy 
$$ \E|\xi_{1,1}|^{2+\tau} + \E|\xi_{2,1}|^{2+\tau} < \infty $$
for some $\tau > 0$.  For each $N \geq 1$ and $k=1,2$, let $\mat{Y}_{N,k}$ be an $N \times N$ real symmetric matrix whose diagonal entries and upper diagonal entries are iid copies of $\xi_{k,1}$, and assume $\mat{Y}_{N,1}$ and $\mat{Y}_{N,2}$ are independent.  Then the ESD of the product
$$ \mat{P}_N := N^{-1} \mat{Y}_{N,1} \mat{Y}_{N,2} $$
converges almost surely to $F_2$ (with density given by \eqref{plotnost} when $m=2$) as $N \to \infty$.
\end{theorem}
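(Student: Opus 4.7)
The plan is to follow the Hermitization scheme underlying the proof of Theorem \ref{thm:main}, replacing only the step which forced $|\rho_k| < 1$, namely the least singular value control for the shifted factors.

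First, by the standard Hermitization argument due to Girko, almost sure convergence $F^{\mat{P}_N} \to F_2$ follows once we establish that for Lebesgue-almost every $z \in \mathbb{C}$,
$$ \frac{1}{N} \log \bigl| \det(\mat{P}_N - z \mat{I}) \bigr| = \frac{1}{N} \sum_{i=1}^N \log \sigma_i(\mat{P}_N - z \mat{I}) \longrightarrow \int \log|w|\, d\nu_z(w) $$
almost surely, where $\sigma_i(\cdot)$ denotes the $i$-th singular value, $\nu_z$ is the deterministic limit of the empirical singular value distributions of $\mat{P}_N - z\mat{I}$, and the negative tail $(\log \sigma_i) \wedge 0$ is uniformly integrable under the empirical singular value distribution.

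Second, the bulk limit $\nu_z$ is identified via the Stieltjes transform / self-consistent equation technique used in the proof of Theorem \ref{thm:main} (and developed in \cite{GTprod, OS}). The key observation is that this analysis uses only the moment and independence structure of the entries and is insensitive to the value of $\rho$; replacing $\rho = 0$ by $\rho = 1$ does not alter the limiting singular value distribution of the Hermitized product. Thus $\nu_z$ coincides with the corresponding limit for $m = 2$ in Theorem \ref{thm:main}, whose log-potential is already known to produce $F_2$.

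Third, we turn to the control of small singular values for $z \neq 0$. We use the factorization
$$ \mat{Y}_{N,1} \mat{Y}_{N,2} - z N \mat{I} = \bigl( \mat{Y}_{N,1} - z N \, \mat{Y}_{N,2}^{-1} \bigr) \, \mat{Y}_{N,2}, $$
which, via $\sigma_{\min}(AB) \geq \sigma_{\min}(A)\,\sigma_{\min}(B)$, reduces the problem to lower-bounding the smallest singular value of $\mat{Y}_{N,1} + \mat{F}_{N,1}$ with $\mat{F}_{N,1} := -zN \, \mat{Y}_{N,2}^{-1}$, which (conditionally on $\mat{Y}_{N,2}$) is a deterministic real symmetric matrix. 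Standard concentration and least singular value bounds for symmetric Wigner matrices give $\|\mat{Y}_{N,2}\|_{\mathrm{op}} = O(\sqrt{N})$ and $\sigma_{\min}(\mat{Y}_{N,2}) \geq N^{-B}$ with probability $1 - o(1)$, so conditionally $\|\mat{F}_{N,1}\|_{\mathrm{op}} \leq N^{O(1)}$.

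The main obstacle is then establishing the bound
$$ \Prob \bigl( \sigma_{\min}(\mat{Y}_{N,1} + \mat{F}_{N,1}) \leq N^{-C} \bigr) = o(1) $$
for every real symmetric deterministic $\mat{F}_{N,1}$ with $\|\mat{F}_{N,1}\|_{\mathrm{op}} \leq N^{O(1)}$, together with matching intermediate-singular-value estimates in the spirit of Tao--Vu that rule out accumulation of moderately small singular values. This is precisely the step that blocked the proof of Theorem \ref{thm:main} at $\rho = 1$, and carrying it out is the crux of the argument; for $m = 2$ this estimate is accessible through the shifted-symmetric-matrix techniques of Nguyen and Vershynin, adapted to atom variables with $(2+\tau)$ moments. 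Plugging the resulting bounds into the log-potential analysis from the first step closes the proof.
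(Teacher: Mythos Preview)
Your proposal follows essentially the same route as the paper: reduce to the Hermitization/Stieltjes-transform machinery already built for Theorem~\ref{thm:main}, note that the self-consistent equation analysis is insensitive to the correlation $\rho_k$, and handle the least singular value via the factorization $\mat{Y}_{N,1}\mat{Y}_{N,2} - wN\mat{I} = (\mat{Y}_{N,1} - wN\,\mat{Y}_{N,2}^{-1})\,\mat{Y}_{N,2}$ together with Nguyen's bound for perturbed symmetric Wigner matrices. The paper carries this out on the linearized $2N\times 2N$ matrix $\mat{Z}_N$ rather than on $\mat{P}_N$ directly, but the substantive content is the same.

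There is one point that needs correction. You assert that, conditionally on $\mat{Y}_{N,2}$, the perturbation $\mat{F}_{N,1} = -zN\,\mat{Y}_{N,2}^{-1}$ is a \emph{real} symmetric matrix. This is false for complex $z$: since $\mat{Y}_{N,2}^{-1}$ is real symmetric, $\mat{F}_{N,1}$ is only \emph{complex} symmetric (its $(i,j)$ and $(j,i)$ entries agree, but they are not real). Nguyen's theorem in \cite{NgLSV} is stated for real symmetric perturbations, so as written your invocation does not apply. The paper resolves this by observing (see Theorem~\ref{thm:NgLSV} and the remark preceding it, citing \cite{NgPC}) that Nguyen's proof actually only uses the symmetry $(\mat{F})_{ij}=(\mat{F})_{ji}$ and therefore goes through verbatim for complex symmetric $\mat{F}$ with polynomially bounded entries. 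This is precisely the extra input that makes the $m=2$ Wigner case accessible while the general case with $\rho_k=1$ remains open: for $m>2$ the analogous perturbation involves products of inverses that need not be even complex symmetric. You should make this distinction explicit.
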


\begin{figure} 
	\begin{center}
	\subfigure{
	\includegraphics[width=5cm]{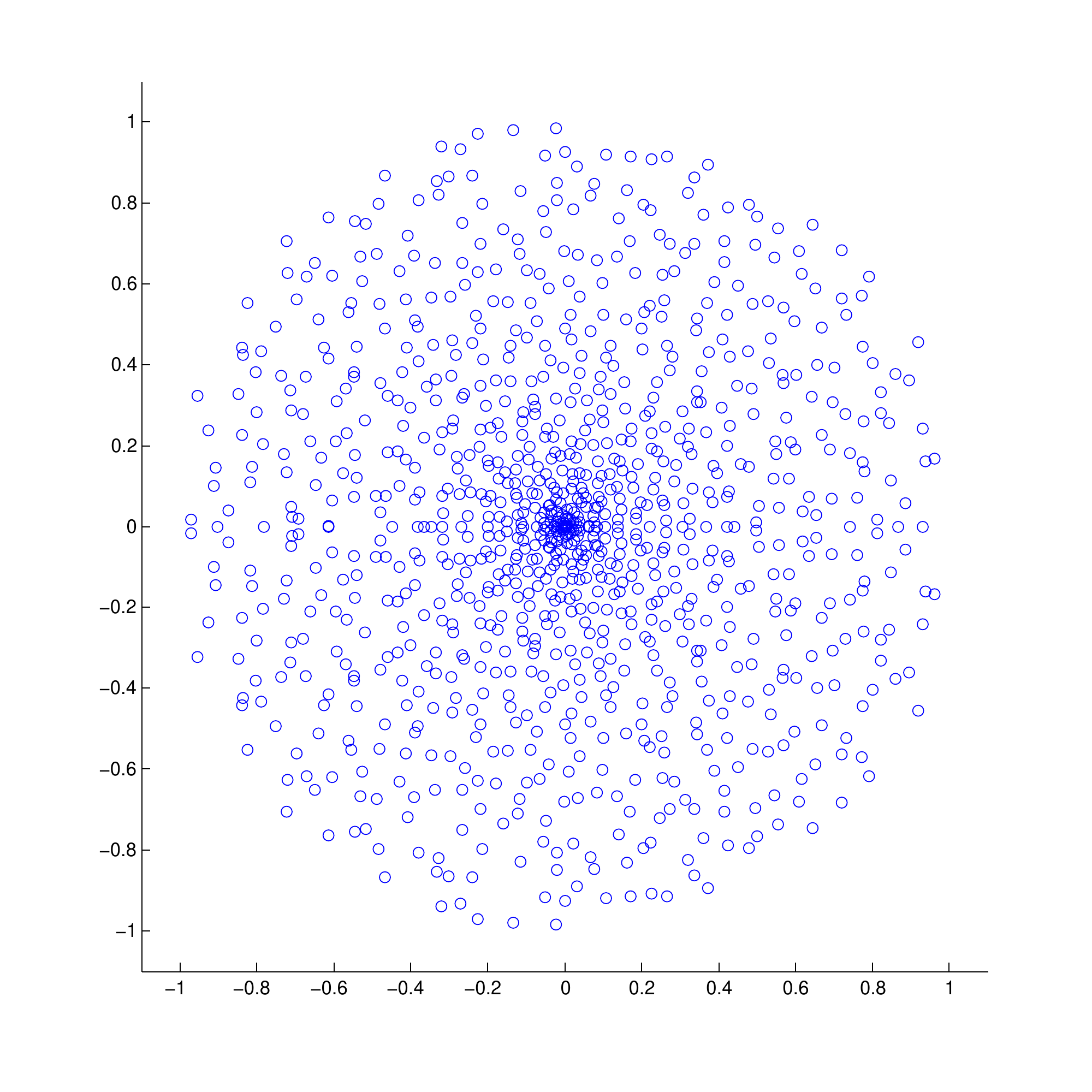}}
	\quad
	\subfigure{
  \includegraphics[width=5cm]{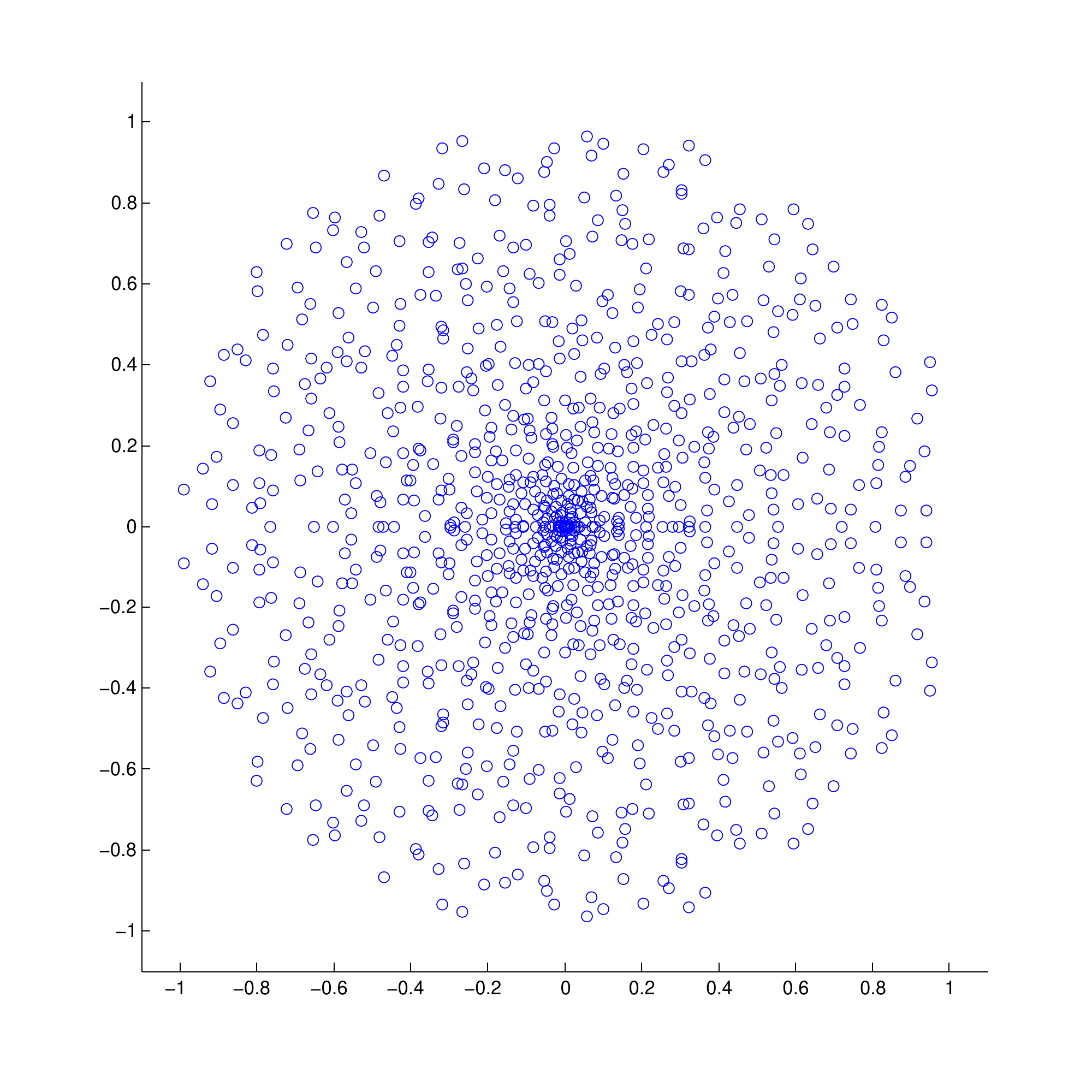}}
  \quad
  \subfigure{
    \includegraphics[width=5cm]{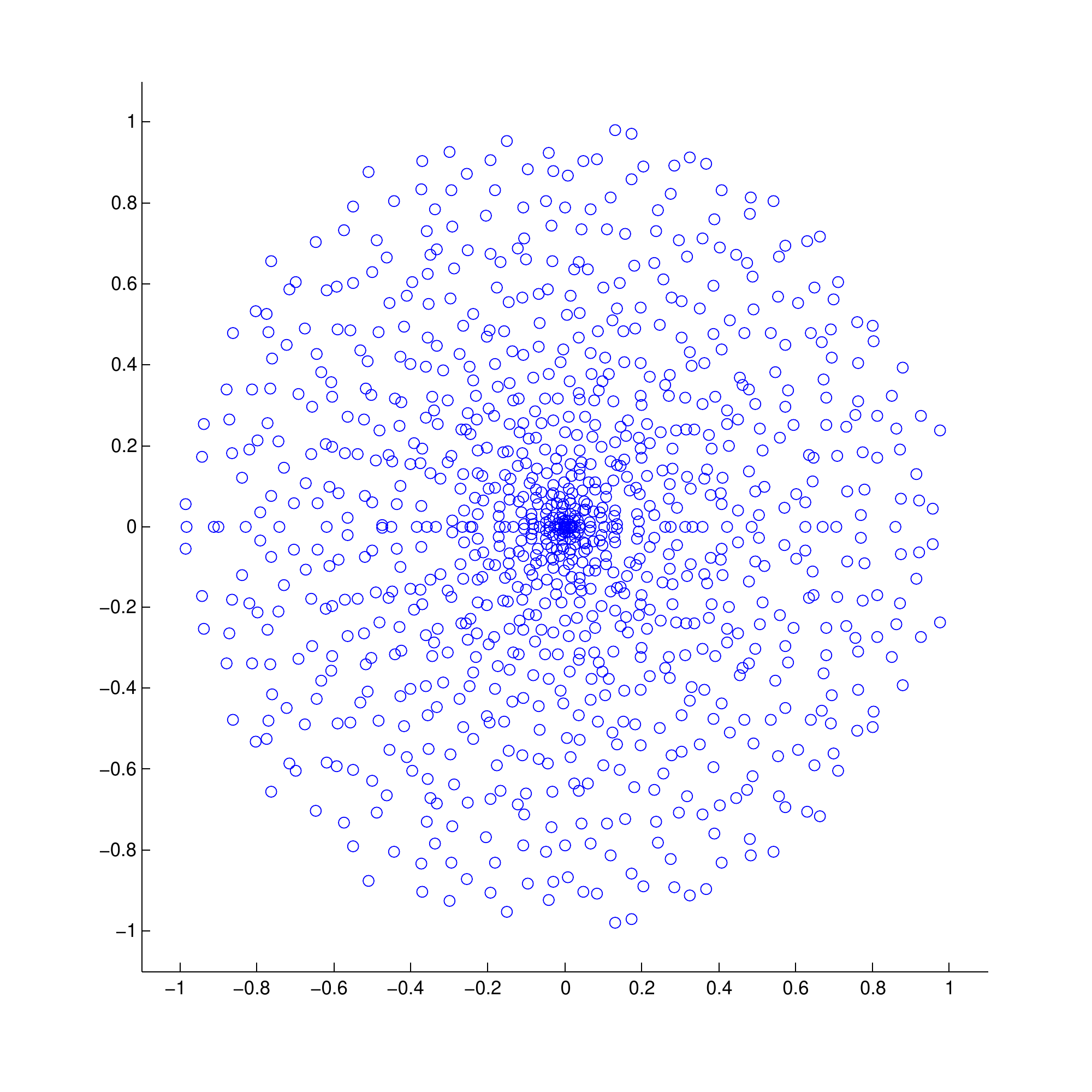}}
\quad
  \subfigure{
    \includegraphics[width=5cm]{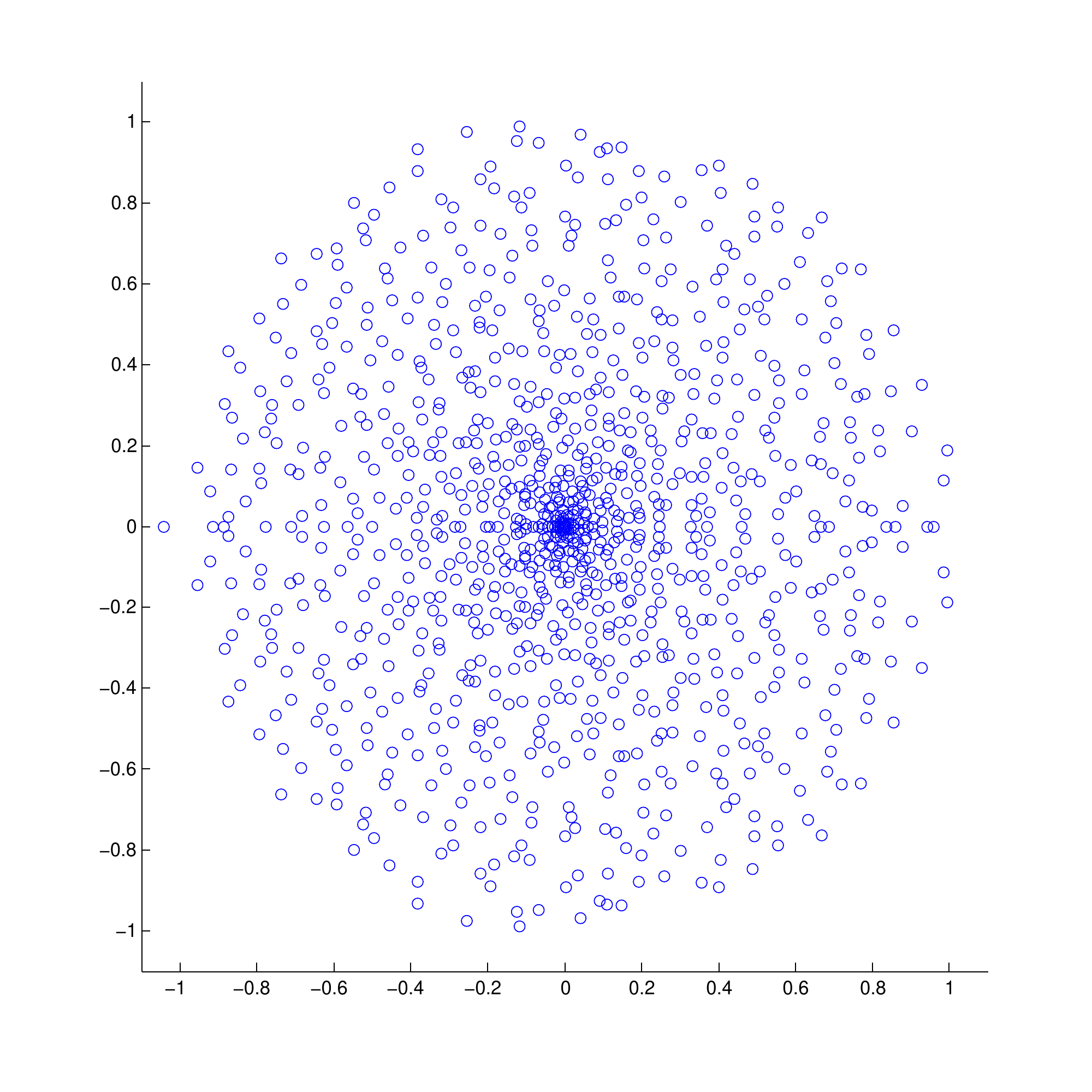}}
   	\caption{The plots show the eigenvalues of the product of two independent $1000 \times 1000$ elliptic random matrices. The plot in the upper-left corner shows the eigenvalues of the product of two identically distributed elliptic random matrices with Gaussian entries when $\rho_1 = \rho_2=1/2$.  The upper-right plot depicts the eigenvalues of the product of two independent Wigner matrices.  The bottom-left plot shows the eigenvalues of the product of two iid random matrices. The plot in the bottom-right corner contains the eigenvalues of the product of a Wigner matrix and an independent iid random matrix.}
   	\label{fig:plots}
 	\end{center}
\end{figure}

\subsection{Overview and outline}
We begin by outlining the proof of Theorem \ref{thm:main}.  Instead of directly considering $ \mat{P}_N := N^{-m/2} \prod_{k=1}^m ( \mat{Y}_{N,k} + \mat{A}_{N,k})$, we introduce a linearized random matrix, $\mat{Z}_N:= \frac{1}{\sqrt{N}} (\mat{Y}_N + \mat{A}_N)$, where $\mat{Y}_N$ and $\mat{A}_N$ are $mN \times mN$ block matrices of the form
\begin{equation} \label{def:YN}
	\mat{Y}_N := \begin{bmatrix} 
				0 &  \mat{Y}_{N,1} &     &            & 0       \\
                         		0 & 0    & \mat{Y}_{N,2} &            & 0        \\      
                           		&      & \ddots & \ddots     &         \\
                         		0 &      &     &          0 & \mat{Y}_{N,m-1} \\
                       		\mat{Y}_{N,m} &      &     &            &  0      
	\end{bmatrix} 
\end{equation}
and
\begin{equation} \label{def:AN}
	\mat{A}_N := \begin{bmatrix} 
				0 &  \mat{A}_{N,1} &     &            & 0       \\
                         		0 & 0    & \mat{A}_{N,2} &            & 0        \\      
                           		&      & \ddots & \ddots     &         \\
                         		0 &      &     &          0 & \mat{A}_{N,m-1} \\
                       		\mat{A}_{N,m} &      &     &            &  0      
			\end{bmatrix}. 
\end{equation}

The following theorem gives the limiting distribution of $\mat Z_N$, from which we will deduce our main theorem as a corollary. 

\begin{theorem} \label{thm:circular}
Under the assumptions of Theorem \ref{thm:main}, the ESD of $\mat{Z}_N := \frac{1}{\sqrt{N}} (\mat{Y}_N + \mat{A}_N)$ converges almost surely to the circular law $F_{\mathrm{circ}}$ as $N \to \infty$.
\end{theorem}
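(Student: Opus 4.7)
The plan is to prove Theorem \ref{thm:circular} via Girko's Hermitization method, in the quantitative form developed by Tao--Vu and subsequent authors. Since $\mat{Z}_N$ is non-Hermitian, we replace the study of its eigenvalues by that of the singular values of $\mat{Z}_N - z \mat{I}_{mN}$ for almost every $z \in \mathbb{C}$. Writing $\sigma_1 \geq \cdots \geq \sigma_{mN}$ for these singular values and $\nu_{\mat{Z}_N, z}$ for their empirical distribution, Girko's identity reduces the theorem to showing
$$ \frac{1}{mN} \sum_{j=1}^{mN} \log \sigma_j(\mat{Z}_N - z \mat{I}_{mN}) \longrightarrow \int_{\mathbb{C}} \log|z - w| \, d\mu_{\mathrm{circ}}(w) $$
almost surely, for almost every $z \in \mathbb{C}$. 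This splits into (i) weak convergence $\nu_{\mat{Z}_N, z} \Rightarrow \nu_z$ a.s.\ to a deterministic limit, and (ii) uniform integrability of $\log$ against the family $\{\nu_{\mat{Z}_N, z}\}_N$, which requires controlling both the largest and smallest singular values.

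As a preliminary reduction I would dispose of the deterministic perturbation $\mat{A}_N$. The rank condition in \eqref{eq:Aassump} combined with Bai's rank inequality for eigenvalue ESDs, applied at the level of the Hermitization so as to preserve singular-value information, makes the contribution of $\mat{A}_N$ to the logarithmic potential at any fixed $z$ negligible in the limit. A standard truncation argument using item \eqref{item:tau} of Assumption \ref{ass:atom} then further allows us to assume the atom variables have bounded support.

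For part (i), I would analyze the $2mN \times 2mN$ Hermitization
$$ \mat{H}_N(z) := \begin{bmatrix} 0 & \mat{Z}_N - z \mat{I}_{mN} \\ (\mat{Z}_N - z \mat{I}_{mN})^\ast & 0 \end{bmatrix} $$
via a matrix-valued Schwinger--Dyson equation for its block resolvent $\E (\mat{H}_N(z) - w \mat{I}_{2mN})^{-1}$. The block structure of $\mat{Y}_N$ together with the independence of $\mat{Y}_{N,1}, \ldots, \mat{Y}_{N,m}$ localizes this system: the elliptic correlation $\rho_k$ couples only entries lying inside a single strictly off-diagonal block of $\mat{Y}_N$, and one may check that these correlations do not enter the leading-order fixed-point equation. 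The limiting equation therefore agrees with the one arising in the iid case $\rho_k = 0$ treated in \cite{GTprod, OS}, whose solution matches the Hermitization of the circular law; Stieltjes transform inversion then delivers the claimed weak convergence.

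For part (ii), the main obstacle is a quantitative polynomial lower bound on the least singular value of the form $\sigma_{mN}(\mat{Z}_N - z \mat{I}_{mN}) \geq N^{-C}$ with probability at least $1 - O(N^{-c})$, uniform in $z$ on compact sets off a null set. This is where the bulk of the technical work lies, and it is where the restriction $|\rho_k| < 1$ from item \eqref{item:rhoass} becomes essential, as flagged in the first remark following Theorem \ref{thm:main}. I would decompose a putative near-null vector of $\mat{Z}_N - z \mat{I}_{mN}$ into its $m$ block components and use the cyclic block equations to reduce to least singular value bounds for individual perturbed elliptic matrices of the form $\mat{Y}_{N,k} + \mat{F}_{N,k}$, which in turn can be obtained by compressible/incompressible vector analysis in the spirit of Rudelson--Vershynin and Tao--Vu, adapted to the elliptic setting. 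Combined with the easy operator-norm upper bound on $\sigma_1(\mat{Z}_N - z \mat{I}_{mN})$ coming from the $(2+\tau)$-th moment hypothesis, the logarithmic tails become uniformly integrable against $\nu_{\mat{Z}_N, z}$, and Girko's formula yields the desired almost sure convergence of $F^{\mat{Z}_N}$ to the circular law.
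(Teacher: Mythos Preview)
Your proposal is correct and follows the paper's route essentially step for step: Girko Hermitization, a $2m \times 2m$ matrix-valued Stieltjes transform whose self-consistent equation turns out to be insensitive to the $\rho_k$ (Section~\ref{sec:resolvent}), and a least singular value bound obtained by writing out the block inverse and conditioning on all but one factor so as to reduce to the single-matrix elliptic bound of Theorem~\ref{thm:ellipticsv} (Section~\ref{sec:singularvalue}). One minor organizational point: the rank-inequality removal of $\mat{A}_N$ is valid only for the bulk singular-value distribution in your part~(i), not for the least singular value in part~(ii), so the paper retains $\mat{A}_N$ throughout Theorem~\ref{thm:least-sing-value} and simply absorbs each $\mat{A}_{N,k}$ into the deterministic perturbation $\mat{F}_{N,k}$ --- which your part~(ii) description already accommodates.
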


In Section \ref{sec:linear}, we show that Theorem \ref{thm:main} is a short corollary of Theorem \ref{thm:circular}.  This same linearization trick was used in \cite{OS} to study products of non-Hermitian matrices with iid entries.  Similar techniques were also used in \cite{A,HT} to study general self-adjoint polynomials of self-adjoint random matrices. 

Sections \ref{sec:resolvent}, \ref{sec:singularvalue}, and \ref{sec:complete} are dedicated to proving Theorem \ref{thm:circular}.  Following the ideas of Girko \cite{G1,G2}, we compute the limiting spectral measure of a non-Hermitian random matrix $\mat{M}$, by employing the method of \emph{Hermitizaition}.  Given an $N \times N$ matrix $\mat{M}$, we recall that the empirical spectral measure of $\mat{M}$ is given by
$$ \mu_\mat{M} := \frac{1}{N} \sum_{i=1}^N \delta_{\lambda_i(\mat{M})}, $$ 
where $\lambda_1(\mat{M}), \ldots, \lambda_N(\mat{M})$ are the eigenvalues of $\mat{M}$.  We let $\nu_{\mat{M}}$ denote the symmetric empirical measure built from the singular values of $\mat{M}$.  That is,
$$ \nu_{\mat{M}} := \frac{1}{2N} \sum_{i =1}^N \left( \delta_{\sigma_i(\mat{M})} + \delta_{-\sigma_i(\mat{M})} \right), $$ 
where $\sigma_1(\mat{M}) \geq \cdots \geq \sigma_N(\mat{M}) \geq 0$ are the singular values of $\mat{M}$.  In particular, 
$$ \sigma_1(\mat{M}) := \sup_{\|x\| = 1} \| \mat{M} x \| $$ 
is the largest singular value of $\mat{M}$ and 
$$ \sigma_N(\mat{M}) := \inf_{\|x\| = 1} \| \mat{M} x \| $$ 
is the smallest singular value, both of which will play a key role in our analysis below.  

The key observation of Girko \cite{G1,G2} relates the empirical spectral measure of a non-Hermitian matrix to that of a Hermitian matrix.  To illustrate the connection, consider the \emph{Cauchy--Stieltjes transform} $s_N$ of the measure $\mu_{\mat{M}}$, where $\mat{M}$ is an $N \times N$ matrix, given by
$$ s_N(z) := \frac{1}{N} \sum_{i=1}^N \frac{1}{ \lambda_i(\mat{M}) - z} = \int_{\mathbb{C}} \frac{1}{x -z } \mu_{\mat{M}} (dx), $$
for $z \in \mathbb{C}$.  Since $s_N$ is analytic everywhere except at the poles (which are exactly the eigenvalues of $\mat{M}$), the real part of $s_N$ determines the eigenvalues.  Let $\sqrt{-1}$ denote the imaginary unit, and set $z = s + \sqrt{-1} t$.  Then we can write the real part of $s_N(z)$ as 
\begin{align*}
	\Re( s_N(z)) &= \frac{1}{N} \sum_{i=1}^N \frac{ \Re(\lambda_i(\mat{M})) - s }{ \left| \lambda_i(\mat{M}) - z \right|^2} \\
		&= \frac{-1}{2N} \sum_{i=1}^N \frac{\partial}{\partial s} \log \left| \lambda_i(\mat{M}) - z \right|^2 \\
		&= \frac{-1}{2N} \frac{\partial}{\partial s}  \log \det \left( \mat{M} - z \mat{I} \right) \left( \mat{M} - z\mat{I} \right)^{\ast} \\
		&= -\frac{\partial}{\partial s} \int_{\mathbb{C}} \log x^2  \nu_{\mat{M} - z \mat{I}}(dx), 
\end{align*}
where $\mat{I}$ denotes the identity matrix.  In other words, the task of studying $\mu_{\mat{M}}$ reduces to studying the measures $\{\nu_{\mat{M} - z \mat{I}}\}_{z \in \mathbb{C}}$.  The difficulty now is that the $\log$ function has two poles, one at infinity and one at zero.  The largest singular value can easily be bounded by a polynomial in $N$.  The main difficulty is controlling the least singular value.  

In order to study $\nu_{\mat M}$ it is useful to note that it is also the empirical spectral measure for the Hermitization of $\mat M$. The \emph{Hermitization} of $\mat M$ is defined to be 
\[ \mat H:=\begin{bmatrix} 
				0 &  \mat M\\
                       		\mat M^*&  0
			\end{bmatrix}. \]
For an $N \times N$ matrix, the Stieltjes transform of $\nu_{\mat{M} - z \mat{I}}$ is also the trace of the Hermitized resolvent.  That is, for $\eta \in \mathbb{C}^{+} := \{ w \in \mathbb{C} : \Im(w) > 0\}$, we have
\[ \int \frac{ 1  }{x-\eta}\nu_{\mat{M} - z \mat{I}}(dx) = \frac{1}{2N} \tr(\mat R( \mat q)),\]
where 
$$ \mat{R}(\mat q) := ( \mat{H} -   \mat{q} \otimes \mat{I}_N)^{-1}, \quad \mat{q} := \begin{bmatrix}  \eta  & z  \\ \bar{z}  & \eta  \end{bmatrix}.  $$
Here $\mat{q} \otimes \mat{I}_N$ denotes the Kronecker product of the matrix $\mat{q}$ and the identity matrix $\mat{I}_N$.  

Typically, in order to estimate the measures $\nu_{\mat{M} -z \mat{I}}$, one shows that the Stieltjes transform approximately satisfies a fixed point equation. Then one can show that this Stieltjes transform is close to the Stieltjes transform that exactly solves the fixed point equation.  Because of the dependencies between entries in the matrix $\mat Z_N$, directly computing the trace of the resolvent of the Hermitization of $\mat Z_N$ is troublesome. To circumvent this issue, in Section \ref{sec:resolvent}, instead of taking the trace of the resolvent, we instead take the partial trace and consider a $2m \times 2m$ matrix-valued Stieltjes transform. Then we show this partial trace approximately satisfies a matrix-valued fixed point equation.

In Section \ref{sec:singularvalue}, we deduce a bound for the least singular value of the matrix $\frac{1}{\sqrt{N}}(\mat{Y}_N + \mat{A}_N) - z \mat{I}$ from the known bounds on the least singular values of the individual matrices $\mat Y_{N,k} +\mat A_{N,k}$.  We finally complete the proof of Theorem \ref{thm:circular} in Section \ref{sec:complete}.  

The proof of Theorem \ref{thm:prodWigner} is very similar to the proof of Theorem \ref{thm:main}.  In fact, there are only a few places in the proof of Theorem \ref{thm:main} where the condition $|\rho_k| < 1$ is required.  We prove Theorem \ref{thm:prodWigner} in Section \ref{sec:Wigner}.  

\subsection{A remark from free probability}

The fact that the limiting distribution of the product is isotropic when the limiting distributions of the individual matrices are not might be surprising at first. Free probability, which offers a natural way to study limits of random matrices by considering joint distributions of elements from a non-commutative probability space, can shed some light on this. In free probability, the natural distribution of non-normal elements is known as the Brown measure.  For an introduction to free probability, we refer the reader to \cite{HP}; see \cite{NS} for further details about $R$-diagonal pairs as well as \cite{BL,HL} for computations of Brown measures. The distribution in Theorem \ref{thm:main} has also appeared in \cite{L}. 

A non-commutative probability space is a unital algebra $\mathcal{A}$ with a tracial state $\tau$. We say a collection of elements $a_1, \ldots, a_m$ are free if 
\[ \tau(p_1(a_{i(1)}) \ldots p_k(a_{i(k)}) ) =0 \]
whenever $p_1, \ldots, p_k$ are polynomials such that $\tau( p_j(a_{i(j)}) ) = 0$, $1 \leq j \leq k$ and $i(1) \not = i(2) \not = \ldots \not = i(k)$.  

In free probability, there are a distinguished set of elements known as $R$-diagonal elements. We refer the reader to \cite[Section 4.4]{HP} for complete details. These operators enjoy several nice properties. When they are non-singular, one such property is that their polar decomposition is $ u h$, where $u$ is a haar unitary operator, $h$ is a positive operator, and $u,h$ are free.  As a result of this decomposition, their Brown measure is isotropic. Additionally, the set of $R$-diagonal operators is closed under addition and multiplication of free elements.

In many cases, the Brown measure can be computed using the techniques of \cite{BL,HL}; however, for the purposes of this note (and due to discontinuities of the Brown measure), we will instead focus on a purely random matrix approach when computing the limiting distribution.  

We conclude this subsection by showing that the product of two elliptical elements is $R$-diagonal. We consider two elements for simplicity; however, the argument easily generalizes to the product of $m$ elliptical elements. 

First, we decompose an elliptical operator into the sum of a semicircular and circular elements, that are free from each other:
$e_i = \sqrt{\rho_i} s_i + \sqrt{1-\rho_i} c_i$.  Since the sum of free $R$-diagonal elements is again $R$-diagonal, it suffices to consider each term in the sum $(\sqrt{\rho_1} s_1 + \sqrt{1-\rho_1} c_1)(\sqrt{\rho_2} s_2 + \sqrt{1-\rho_2} c_2)$ individually and then observe that the terms are free from one another. Each term is of the form $x_1 x_2$, where $x_i$ is either semicircular or circular, with polar decomposition:
$s = a h$, $c = u h$, where $h$ is a quarter circular element, $a$ has distribution 1/2 at -1 and 1/2 at 1, and commutes with $h$, $u$ is haar unitary free from $h$. Then we consider the product:
\[ x_1 x_2 = v_1 h_1 v_2 h_2 .\]
We begin by introducing a new free haar unitary $u$. Indeed, $x_1 x_2$ has the same distribution as
\[ u v_1 h_1 u^* v_2 h_2. \]
Then $u v_1$ and $u^* v_2$ are haar unitaries, and one can check they are free from each other and $h_1$ and $h_2$. Since the product of $R$-diagonal elements remains $R$-diagonal $x_1 x_2$ is $R$-diagonal. 
Repeating this process for each term leads to the sum of free $R$-diagonal operators.


\subsection{Notation}

We use asymptotic notation (such as $O,o,\Omega$) under the assumption that $N \rightarrow \infty$.  We use $X \ll Y, Y \gg X, Y=\Omega(X)$, or $X = O(Y)$ to denote the bound $X \leq CY$ for all sufficiently large $N$ and for some constant $C$.  Notations such as $X \ll_k Y$ and $X=O_k(Y)$ mean that the hidden constant $C$ depends on another constant $k$.  We always allow the implicit constants in our asymptotic notation to depend on the integer $m$ from Theorem \ref{thm:main}; we will not denote this dependence with a subscript.  $X=o(Y)$ or $Y=\omega(X)$ means that $X/Y \rightarrow 0$ as $N \rightarrow \infty$.  

$\|\mat{M}\|$ is the spectral norm of the matrix $\mat{M}$.  $\|\mat{M}\|_2$ denotes the Hilbert-Schmidt norm of $\mat{M}$ (defined in \eqref{eq:def:hs}).  We let $\mat{I}_N$ denote the $N \times N$ identity matrix.  Often we will just write $\mat{I}$ for the identity matrix when the size can be deduced from the context.  

We write a.s., a.a., and a.e. for almost surely, Lebesgue almost all, and Lebesgue almost everywhere respectively.  We use $\sqrt{-1}$ to denote the imaginary unit and reserve $i$ as an index.  We let $\oindicator{E}$ denote the indicator function of the event $E$.

We let $C$ and $K$ denote constants that are non-random and may take on different values from one appearance to the next.  The notation $K_p$ means that the constant $K$ depends on another parameter $p$.  We always allow the constants $C$ and $K$ to depend on the integer $m$ from Theorem \ref{thm:main}; we will not denote this dependence with a subscript.  

In view of Theorem \ref{thm:main} and Assumption \ref{ass:atom}, we define the correlations $\rho_k := \E[ \xi_{k,1} \xi_{k,2} ]$ for the atom variables $\xi_{k,1}, \xi_{k,2}$.  In addition, we let $\tau > 0$ be such that
$$ \sum_{k=1}^m \left( \E|\xi_{k,1}|^{2+\tau} + \E|\xi_{k,2}|^{2+\tau} \right) < \infty. $$

\section{Proof of Theorem \ref{thm:main} } \label{sec:linear}




We begin by proving Theorem \ref{thm:main} assuming Theorem \ref{thm:circular}. The majority of the paper will then be devoted to proving Theorem \ref{thm:circular}.    

We remind the reader that the matrices $\mat{P}_N, \mat{Y}_N$, and $\mat{A}_N$ are defined in \eqref{def:PN}, \eqref{def:YN}, and \eqref{def:AN}, respectively.

\begin{proof}[Proof of Theorem \ref{thm:main}]
Let $\mat{Z}_N := \frac{1}{\sqrt{N}} (\mat{Y}_N + \mat{A}_N)$.  Then $\mat{Z}_N^m$ is a block diagonal matrix of the form
$$ \begin{bmatrix} \mat{Z}_{N,1} & & 0 \\ & \ddots & \\ 0 & & \mat{Z}_{N,m} \end{bmatrix}, $$
where $\mat{Z}_{N,1} := \mat{P}_N$ and $\mat{Z}_{N,k}$ is the matrix 
$$ N^{-m/2} (\mat{Y}_{N,k} + \mat{A}_{N,k}) \cdots (\mat{Y}_{N,m} + \mat{A}_{N,m}) (\mat{Y}_{N,1} + \mat{A}_{N,1}) \cdots (\mat{Y}_{N,k-1} + \mat{A}_{N,k-1}) $$
for $1 < k \leq m$.  

Let $f: \mathbb{C} \to \mathbb{C}$ be a bounded and continuous function.  Since each  $\mat{Z}_{N,k}$ has the same eigenvalues as $\mat{P}_N$, we have
$$ \int_{\mathbb{C}} f(z) d \mu_{\mat{P}_N}(z) = \frac{1}{N} \sum_{i=1}^n f(\lambda_i(\mat{P}_N)) = \frac{1}{mN} \sum_{i=1}^{mN} f(\lambda_i(\mat{Z}_N^m)) = \int_{\mathbb{C}} f(z^m) d \mu_{\mat{Z}_N}(z). $$
By Theorem \ref{thm:circular}, we have almost surely 
$$ \int_{\mathbb{C}} f(z^m) d \mu_{\mat{Z}_N}(z) \longrightarrow \frac{1}{\pi} \int_{\mathbb{D}} f(z^m) d^2 z  $$
as $N \to \infty$, where $\mathbb{D}$ is the unit disk in the complex plane centered at the origin and $d^2 z = d \Re(z) d \Im(z)$.  Thus, by the transformation $z \mapsto z^m$, we obtain
$$ \frac{1}{\pi} \int_{\mathbb{D}} f(z^m) d^2z = \frac{m}{\pi} \int_{\mathbb{D}} f(z) \frac{1}{m^2} |z|^{\frac{2}{m} -2} d^2z, $$
where the factor of $m$ out front of the integral corresponds to the fact that the transformation maps the complex plane $m$ times onto itself.  

Combining the computations above, we conclude that almost surely
$$  \int_{\mathbb{C}} f(z) d \mu_{\mat{P}_N}(z) \longrightarrow \frac{1}{\pi m} \int_{\mathbb{D}} f(z) |z|^{\frac{2}{m} -2} d^2z $$
as $N \to \infty$.  Since $f$ was an arbitrary bounded and continuous function, the proof of Theorem \ref{thm:main} is complete.  
\end{proof}



\section{A matrix-valued Stieltjes transform} \label{sec:resolvent}


In this section, we define a matrix-valued Stieltjes transform and introduce the relevant notation and limiting objects. Then we show that this Stieltjes transform concentrates around its expectation and estimate the error between its expectation and the limiting transform.

Here and in the sequel, we will take advantage of the following form for the inverse of a partitioned matrix (see, for instance, \cite[Section 0.7.3]{HJ}):
\begin{equation} \label{eq:schur}
	\begin{bmatrix} \mat{A} & \mat{B} \\ \mat{C} & \mat{D} \end{bmatrix}^{-1} = \begin{bmatrix} (\mat{A} - \mat{B} \mat{D}^{-1} \mat{C})^{-1} & - \mat{A}^{-1} \mat{B} (\mat{D} - \mat{C} \mat{A}^{-1} \mat{B})^{-1} \\ - (\mat{D} - \mat{C} \mat{A}^{-1} \mat{B})^{-1} \mat{C} \mat{A}^{-1} & (\mat{D} - \mat{C} \mat{A}^{-1} \mat{B})^{-1} \end{bmatrix},
\end{equation}
where $\mat{A}$ and $\mat{D}$ are square matrices.  

Set $\mat{X}_{N,k} := \frac{1}{\sqrt{N}} \mat{Y}_{N,k}$, and let $\mat{X}_N := \frac{1}{\sqrt{N}} \mat Y_N$.  Let $\mat{H}_N$ be the Hermitization of $\mat{X}_N$.  Define the resolvent
$$ \mat{R}_N(\mat q) := ( \mat{H}_N -   \mat{q} \otimes \mat{I}_N)^{-1}, $$
where 
\begin{equation} \label{def:q}
	\mat{q} := \begin{bmatrix}  \eta \mat{I}_{m} & z \mat{I}_{m} \\ \bar{z} \mat{I}_{m} & \eta \mat{I}_{m} \end{bmatrix} 
\end{equation}
for $\eta \in \mathbb{C}^{+} := \{ w \in \mathbb{C} : \Im(w) > 0\}$.  

By the Stieltjes inversion formula, $\nu_{\mat X_N - z \mat I_N}$ can be recovered from $\frac{1}{2 N m} \tr \mat R_N(\mat q)$. Because of the dependencies between matrix entries, each entry of the resolvent cannot be computed directly by Schur's Complement. One possible way to compute resolvent entries is by following the approach in \cite[Section 4.3]{OS} and use a decoupling formula to compute matrix entries. See also \cite{Nell, NgO} for computations in the elliptical case. The dependencies introduce more terms to these computations, leading to a system of equations involving diagonal entries of each block of the resolvent. These equations do not seem to admit an obvious solution. Instead we offer a matrix-valued interpretations of these equations as well as a more direct derivation of the equations.

In order to study the resolvent we will retain the block structure of $\mat H_N$ and view $2mN \times 2mN$ matrices as elements of $2m \times 2m$ matrices tensored with $N \times N$  matrices. Taking this view, $\mat R_N$ is $2m$ by $2m$ matrix with $N$ by $N$ blocks. When we wish to refer to one of these blocks (or more generally any element of a $2m \times 2m$ matrix) we will use a superscript $ab$ for the $ab^{th}$ entry. Instead of considering the full trace of $\mat R_N$, we instead take the partial trace over the $N\times N$ matrix part of the tensor product and define
$\mat{\Gamma}_N(\mat q) := (\mat I_{2m} \otimes  \frac{1}{N} \tr) \mat{R}_N(\mat q)$. That is, $\mat{\Gamma}_N(\mat q)$ is a $2m \times 2m$ matrix whose $ab^{th}$ entry is the normalized trace of the $ab^{th}$ block of $\mat{R}_N(\mat q)$.  In other words, $\mat{\Gamma}_N^{ab}(\mat q) = \frac{1}{N} \tr \mat{R}_N^{ab}(\mat q)$.  To compute this partial trace we consider $\mat R_{N;kk}$, the $2m \times 2m$ matrix whose $ab^{th}$ entry is the $(k,k)$ entry of the block $\mat R_N^{ab}$. Finally, we define the scalar
$$ a_N(\mat q) := \frac{1}{2m} \tr \mat \Gamma_N(\mat q). $$

For each $1 \leq k \leq N$, let $\mat{Y}_N^{(k)}$ denote the matrix $\mat{Y}_N$ with the $k$-th rows and $k$-th columns of $\mat{Y}_{N,1}, \ldots, \mat{Y}_{N,m}$ replaced by zeroes.  Let $\mat{H}_N^{(k)}$ be the Hermitization of $\frac{1}{\sqrt{N}} \mat{Y}^{(k)}_N$.  Define the resolvent
\begin{equation}
\label{kres}
 \mat{R}^{(k)}_N := ( \mat{H}_N^{(k)} - \mat{q} \otimes \mat{I}_N)^{-1},
 \end{equation}
and set $\mat \Gamma_N^{(k)}(\mat q) := (\mat I_{2m} \otimes  \frac{1}{N} \tr) \mat{R}_N^{(k)}(\mat q)$.  

 Let $\mat H_{N;k}^{(k)}$ be the $2m \times 2m$ matrix whose $ab^{th}$ entry is the $k^{th}$ column of the $ab^{th}$ block of $\mat H_N$, with the $k^{th}$ entry of each vector set to $0$. Note that we use a semi-colon when we refer to matrix entries or columns, in contrast to the comma, which referred to a matrix.

Later in this section we will show that $\mat{\Gamma}_N $ approximately satisfies the fixed point equation
\begin{equation} 
\label{gammadef}
\mat{\Gamma} = - ( \mat q + \mat \Sigma( \mat \Gamma ) )^{-1} 
\end{equation}
with $\mat \Sigma$ being a linear operator on $2m \times 2m$ matrices defined by:
\[ \mat \Sigma( \mat A )_{ab} = \sum_{c,d=1}^{2m} \sigma(a,c;d,b) \mat A_{cd}  \]
where $ \sigma(a,c;d,b) = N\E[H_{12}^{ac} H_{21}^{db}]$ and $\mat H^{ab}$ is the $N \times N$ matrix that is the $(a,b)^{th}$ block of the matrix $\mat H_N$, of course, the choice $(1,2)$ was arbitrary.
More concretely, we define $a'$ for any $1 \leq a \leq 2m$ to be the column index of nonzero block in the $a^{th}$ row of $\mat H_N$.  So
\[ \mat \Sigma( \mat A )_{ab} =  \mat{A}_{a'a'  } \delta_{ab}  + \rho_a \mat{A}_{a'a  } \delta_{a' b}, \]
where for $a > m$ we define $\rho_a := \rho_{a'}$.
It is important that $\mat \Sigma$ leaves diagonal entries of $\mat A$ on the diagonal and that $a' \not = a\pm m$.

To describe the limiting matrix-valued Stieltjes transform, $\mat \Gamma( \mat q)$, we first define $a(\mat q)$, the Stieltjes transform corresponding to the circular law.  That is, for each $z \in \mathbb{C}$, $a(\mat q)$ is the unique Stieltjes transform that solves the equation
\begin{equation}
\label{defa}
 a(\mat q) = \frac{a(\mat q) + \eta}{|z|^2 - (a(\mat q)+\eta)^2} 
 \end{equation}
for all $\eta \in \mathbb{C}^{+}$; see \cite[Section 3]{GTcirc}.    

Let
$$\mat{\Gamma}(\mat q) := \begin{bmatrix} -( a(\mat q) + \eta) \mat{I}_m & -z \mat{I}_m \\ -\bar{z} \mat{I}_m & -(a(\mat q) + \eta) \mat{I}_m \end{bmatrix}^{-1}; $$
additionally, \eqref{defa} implies the equality
\[\mat \Gamma(\mat q) = \begin{bmatrix}  a(\mat q) \mat{I}_{m} & \frac{z}{(a(\mat q)+\eta)^2-|z|^2} \mat{I}_{m} \\ \frac{\overline{z} }{(a(\mat q)+\eta)^2-|z|^2} \mat{I}_{m} & a(\mat q) \mat{I}_{m} \end{bmatrix}. \]

We recall that for a square matrix $\mat{M}$, the imaginary part of $\mat{M}$ is given by $\Im(\mat{M}) = \frac{1}{2 \sqrt{-1}} ( \mat{M} - \mat{M}^\ast)$.  We say $\mat{M}$ has \emph{positive imaginary part} if $\Im(\mat{M})$ is positive definite.  It was shown in \cite{HFS} that \eqref{gammadef} has one solution with positive imaginary part and is therefore a matrix-valued Stietljes transform. Furthermore, the last two equalities show that $\mat \Gamma(\mat q)$ is a solution to \eqref{gammadef}. 

A good way to see that the solution to \eqref{gammadef} is of the given form is to note that for large $\eta$, $\mat \Gamma(\mat q)$ is approximately $\mat q^{-1}$. Then by analytic continuation, the entries of $\mat q^{-1}$ that are non-zero must also be non-zero entries of $\mat \Gamma$. Finally, this ansatz for the form of the solution is applied to \eqref{gammadef} and iterated until the non-zero entries of $\mat \Gamma$ are preserved by \eqref{gammadef}. Through this process one observes that the value of each $\rho_a$ does not affect the solution.

\subsection{ Concentration }
\label{concentrationsection}
In this section we show that $\mat \Gamma_N$ concentrates around its expectation.

We introduce $\eps$-nets as a convenient way to discretize a compact set.  Let $\eps > 0$.  A set $X$ is an $\eps$-net of a set $Y$ if for any $y \in Y$, there exists $x \in X$ such that $\|x-y\| \leq \eps$.   The following estimate for the maximum size of an $\eps$-net is well-known and follows from a standard volume argument (see, for example, \cite[Lemma 3.11]{OR}).

\begin{lemma}[Lemma 3.11 from \cite{OR}] \label{lemma:epsnet}
Let $D$ be a compact subset of $\{z \in \mathbb{C} : |z| \leq M \}$.  Then $D$ admits an $\eps$-net of size at most
$$ \left( 1 + \frac{2M}{\eps} \right)^2. $$
\end{lemma}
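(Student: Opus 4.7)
The plan is to prove this by a standard volume packing argument in the plane. First I would construct a maximal $\eps$-separated subset $X \subseteq D$, that is, a subset in which any two distinct points are at distance greater than $\eps$ and which cannot be extended to a strictly larger such subset (the existence of such an $X$ follows from a greedy construction, which terminates because $D$ is bounded). The maximality of $X$ immediately implies that $X$ is an $\eps$-net of $D$: if some $y \in D$ had distance greater than $\eps$ to every element of $X$, then $X \cup \{y\}$ would still be $\eps$-separated, contradicting maximality. So it suffices to bound $|X|$.

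Next I would bound $|X|$ by a packing estimate. Consider the open disks $B(x, \eps/2) \subseteq \mathbb{C}$ for $x \in X$. Since the points of $X$ are pairwise at distance strictly greater than $\eps$, the triangle inequality shows that these disks are pairwise disjoint. Since $X \subseteq D \subseteq \{z \in \mathbb{C} : |z| \leq M\}$, each $B(x,\eps/2)$ is contained in the larger disk $\{z \in \mathbb{C} : |z| \leq M + \eps/2\}$. Comparing Lebesgue measure on $\mathbb{R}^2$ (i.e.\ area) for the disjoint union and its enclosing disk yields
$$ |X| \cdot \pi (\eps/2)^2 \;\leq\; \pi (M + \eps/2)^2, $$
which rearranges to
$$ |X| \;\leq\; \left( \frac{M + \eps/2}{\eps/2} \right)^2 \;=\; \left( 1 + \frac{2M}{\eps} \right)^2, $$
as claimed.

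There is essentially no obstacle here; the only minor point to verify carefully is the disjointness of the half-radius disks, which is an immediate consequence of the $\eps$-separation of $X$. The argument is a textbook metric-entropy computation and requires no input specific to the random matrix setting of the paper.
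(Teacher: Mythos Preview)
Your argument is correct and is precisely the ``standard volume argument'' the paper alludes to when citing this lemma from \cite{OR}; the paper does not spell out a proof, so there is nothing further to compare.
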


\begin{lemma} \label{lemma:concentrate}
Let $M > 0$.  Under the assumptions of Theorem \ref{thm:main}, a.s.
$$ \sup_{|z| \leq M} \sup_{ |\eta| \leq M, \Im(\eta) \geq N^{-1/8}} \| \mat{\Gamma}_N(\mat{q}) - \E \mat{\Gamma}_N(\mat{q}) \| = O_M(N^{-1/8}). $$
\end{lemma}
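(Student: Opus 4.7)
The plan is to combine a Doob martingale concentration argument with an $\eps$-net discretization of the parameter region. Fix $\mat{q}$ and consider a scalar entry $\mat{\Gamma}_N^{ab}(\mat{q}) = \frac{1}{N}\tr \mat{R}_N^{ab}(\mat{q})$. I will filter the randomness by letting $\mathcal{F}_k$ be the sigma-algebra generated by rows $1,\ldots,k$ together with columns $1,\ldots,k$ of every $\mat{Y}_{N,j}$; revealing each row-column pair simultaneously is what handles the elliptic correlation $\E[\xi_{j,1}\xi_{j,2}] = \rho_j$. With this choice the matrix $\mat{R}_N^{(k)}$ from \eqref{kres} is measurable with respect to the sigma-algebra generated by everything in $(\mat{Y}_{N,j})_{j=1}^m$ outside the $k$-th rows and columns, and the standard swapping argument reduces the Doob increments to a deterministic estimate:
\begin{equation*}
\bigl| \E[\mat{\Gamma}_N^{ab} \mid \mathcal{F}_k] - \E[\mat{\Gamma}_N^{ab} \mid \mathcal{F}_{k-1}] \bigr| \leq 2 \sup \bigl| \mat{\Gamma}_N^{ab} - \mat{\Gamma}_N^{(k),ab} \bigr|.
\end{equation*}

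That deterministic estimate comes from the resolvent identity
\begin{equation*}
\mat{R}_N - \mat{R}_N^{(k)} = -\mat{R}_N \bigl( \mat{H}_N - \mat{H}_N^{(k)} \bigr) \mat{R}_N^{(k)}
\end{equation*}
combined with the observation that $\mat{H}_N - \mat{H}_N^{(k)}$ has rank $O(m) = O(1)$. Writing this perturbation as $UV^*$ with $U, V$ of bounded width, the factorization $\mat{R}_N - \mat{R}_N^{(k)} = -(\mat{R}_N U)(V^*\mat{R}_N^{(k)})$ shows that the difference itself has rank $O(1)$, so its $ab$ block does as well. Splitting $\mat{H}_N - \mat q \otimes \mat I_N$ into its Hermitian part plus $-\sqrt{-1}\Im(\eta)\mat I$ gives $\|\mat{R}_N\|, \|\mat{R}_N^{(k)}\| \leq 1/\Im(\eta)$, and bounding the trace of a low-rank matrix by its rank times its operator norm yields $|\mat{\Gamma}_N^{ab} - \mat{\Gamma}_N^{(k),ab}| = O(N^{-1}\Im(\eta)^{-1})$. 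Azuma--Hoeffding then produces, for every $t > 0$,
\begin{equation*}
\Prob\bigl( \bigl| \mat{\Gamma}_N^{ab}(\mat q) - \E \mat{\Gamma}_N^{ab}(\mat q) \bigr| > t \bigr) \leq 2\exp\bigl( - c t^2 N \Im(\eta)^2 \bigr),
\end{equation*}
and at $t$ of order $N^{-1/8}$ with $\Im(\eta) \geq N^{-1/8}$ the right-hand side is at most $\exp(-cN^{1/2})$.

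To promote the pointwise bound to a uniform one, I will use Lemma \ref{lemma:epsnet} to cover the set $\{(z,\eta) : |z| \leq M, |\eta| \leq M, \Im(\eta) \geq N^{-1/8}\}$ by an $\eps$-net of size $N^{O(1)}$ with $\eps = N^{-K}$ for a suitably large fixed $K$. Differentiating $(\mat H_N - \mat q \otimes \mat I_N)^{-1}$ gives the deterministic Lipschitz estimate $\|\partial_{\mat q} \mat{\Gamma}_N(\mat q)\| = O(\Im(\eta)^{-2}) = O(N^{1/4})$, so for sufficiently large $K$ the error from interpolating between net points is $o(N^{-1/8})$ uniformly in the region. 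A union bound over the polynomially many net points preserves the super-polynomial Azuma tail, and the Borel--Cantelli lemma then upgrades this to almost sure uniform convergence at rate $O(N^{-1/8})$, which is exactly the statement of the lemma (all $2m \times 2m$ matrix norms being equivalent for fixed $m$).

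The main care point is purely structural: one must verify that the rank of $\mat{H}_N - \mat{H}_N^{(k)}$ really is $O(1)$ despite the Hermitization and the tensor form of $\mat{q}$, and that pairing rows with columns in the filtration truly makes $\mat{R}_N^{(k)}$ independent of what is revealed at step $k$. Neither is deep, but both need careful bookkeeping through Definition \ref{def:elliptic} and the block form \eqref{def:YN}. Beyond these set-up checks, the argument has substantial slack, since the martingale already delivers concentration at the sharper rate $O(\Im(\eta)^{-1} N^{-1/2}) = O(N^{-3/8})$, comfortably smaller than the target $N^{-1/8}$.
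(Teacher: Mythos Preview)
Your proof is correct and follows essentially the same architecture as the paper: the same filtration (revealing the $k$-th row and column of every $\mat{Y}_{N,j}$ simultaneously), the same deterministic increment bound \eqref{rescomp} via the rank-$O(m)$ resolvent perturbation, the same Lipschitz estimate for the $\eps$-net, and Borel--Cantelli to finish. The one difference is that you invoke Azuma--Hoeffding on the bounded martingale differences to get an exponential tail directly, whereas the paper applies Burkholder's inequality to bound $p$-th moments and then Markov's inequality with $p$ chosen large; since the increments here are \emph{deterministically} bounded, your route is slightly more direct and yields a stronger (sub-Gaussian) tail, while Burkholder would be the natural choice if only moment bounds on the increments were available.
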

\begin{proof}
By the Borel-Cantelli lemma, it suffices to show that
$$ \Prob \left( \sup_{|z| \leq M} \sup_{ |\eta| \leq M, \Im(\eta) \geq N^{-1/8}} \| \mat{\Gamma}_N(\mat{q}) - \E \mat{\Gamma}_N(\mat{q}) \| \geq C N^{-1/8} \right) \leq \frac{1}{N^2}, $$
for some constant $C>0$.  

Let $\mathcal{N}_1$ and $\mathcal{N}_2$ be $N^{-1}$-nets of $\{z \in \mathbb{C} : |z| \leq M\}$ and $\{\eta \in \mathbb{C} : |\eta|\leq M, \Im(\eta) \geq N^{-1/8} \}$ respectively. By Lemma \ref{lemma:epsnet}, 
$$ |\mathcal{N}_1| + |\mathcal{N}_2| = O_M(N^{2}). $$
Let $\mathcal{N}$ be the set of all $\mat{q}$ (defined by \eqref{def:q}) such that $z \in \mathcal{N}_1$ and $\eta \in \mathcal{N}_2$.  Hence $|\mathcal{N}| = |\mathcal{N}_1| |\mathcal{N}_2| = O_M(N^{4})$.  By the resolvent identity, 
$$ \| \mat{\Gamma}_N(\mat{q}) - \mat{\Gamma}_N(\mat{q'}) \| \leq N^{1/4} \| \mat{q} - \mat{q}' \| .$$
 Thus, by a standard $\eps$-net argument, it suffices to show that
$$ \Prob \left( \sup_{q \in \mathcal{N}} \| \mat{\Gamma}_N(\mat{q}) - \E \mat{\Gamma}_N(\mat{q}) \| \geq N^{-1/8} \right) \leq \frac{1}{N^2}. $$

By the union bound and Markov's inequality, we have, for any $p > 1$, 
\begin{align*}
	\Prob \left( \sup_{q \in \mathcal{N}} \| \mat{\Gamma}_N(\mat{q}) - \E \mat{\Gamma}_N(\mat{q}) \| \geq N^{-1/8} \right) &\leq \sum_{\mat{q} \in \mathcal{N}} \Prob \left( \| \mat{\Gamma}_N(\mat{q}) - \E \mat{\Gamma}_N(\mat{q}) \| \geq N^{-1/8} \right) \\
		&\leq \sum_{\mat{q} \in \mathcal{N}} N^{p/8} \E \| \mat{\Gamma}_N(\mat{q}) - \E \mat{\Gamma}_N(\mat{q}) \|^p.
\end{align*}
Therefore, it will suffice to show that for some $p > 1$ sufficiently large, there exists a constant $K_p > 0$ (depending only on $p$), such that 
$$ \E \| \mat{\Gamma}_N(\mat{q}) - \E \mat{\Gamma}_N(\mat{q}) \|^p \leq \frac{K_p}{N^{3p/8}} $$
for any $\mat{q} \in \mathcal{N}$.  

In fact, since $\mat{\Gamma}_N(\mat{q})$ is a $2m \times 2m$ matrix, we will show that, for every $p > 1$, 
\begin{equation} \label{eq:gammabshow}
	\E | \mat{\Gamma}^{ab}_N(\mat{q}) - \E \mat{\Gamma}^{ab}_N(\mat{q}) |^p \leq \frac{K_p}{N^{3p/8}} 
\end{equation}
for any $1 \leq a,b \leq 2m$ and $\mat{q} \in \mathcal{N}$.  

Fix $1 \leq a,b \leq 2m$.  Let $\E_k$ denote the conditional expectation with respect to the first $k$ rows and $k$ columns of each matrix $\mat{Y}_{N,1}, \ldots, \mat{Y}_{N,m}$. 


We now rewrite $\mat{\Gamma}^{ab}_N(\mat{q}) - \E \mat{\Gamma}^{ab}_N(\mat{q})$ as a martingale difference sequence.  Indeed,
\begin{align*}
	\mat{\Gamma}^{ab}_N(\mat{q}) - \E \mat{\Gamma}^{ab}_N(\mat{q}) &= \sum_{k=1}^N (\E_k - \E_{k-1}) \mat{\Gamma}^{ab}_N(\mat{q}) \\
		&= \sum_{k=1}^N (\E_k - \E_{k-1}) \left( \mat{\Gamma}^{ab}_N(\mat{q}) - \mat{\Gamma}^{(k) ab}_N(\mat{q}) \right).  
\end{align*}
Since $\mat{Y}_N - \mat{Y}_N^{(k)}$ is at most rank $2m$, the resolvent identity implies that $\mat{R}_N(\mat{q}) - \mat{R}^{(k)}_N(\mat{q})$ is at most rank $4m$ and $\| \mat{R}_N(\mat{q}) - \mat{R}^{(k)}_N(\mat{q}) \| \leq 8m|\Im(\eta)|^{-1}$. This then gives the bound
\begin{equation}
\label{rescomp}
 \left|\mat{\Gamma}^{ab}_N(\mat{q}) - \mat{\Gamma}^{(k) ab}_N(\mat{q})\right| \leq \frac{8m}{N|\Im(\eta)|} \leq 8m N^{-7/8} 
 \end{equation}
for any $\mat{q} \in \mathcal{N}$.  Thus, by the Burkholder inequality \cite{Bmart} (see for example \cite[Lemma 2.12]{BSbook} for a complex-valued version of the Burkholder inequality), for any $p > 1$, 
\begin{align}
\label{pthest}
	\E \left| \mat{\Gamma}^{ab}_N(\mat{q}) - \E \mat{\Gamma}^{ab}_N(\mat{q}) \right|^p &\leq K_p \E \left( \sum_{k=1}^N \left| \mat{\Gamma}^{ab}_N(\mat{q}) - \mat{\Gamma}^{(k) ab}_N(\mat{q}) \right|^2 \right)^{p/2} \notag \\
		&\leq K_p N^{-p/2} |\Im(\eta)|^{-p} \leq K_p N^{-3p/8}
\end{align}
for any $\mat{q} \in \mathcal{N}$.  This verifies \eqref{eq:gammabshow}, and hence the proof of the lemma is complete.  
\end{proof}

\subsection{Estimate of the expectation}

We now estimate the difference between $\E[\mat \Gamma_N(\mat q)]$ and $\mat \Gamma(\mat q)$. In Section \ref{sec:truncation}, we show that it suffices to assume that the entries of $\mat Y_{N,k}$ are truncated and that the diagonal entries are zero (as assumed in Lemma \ref{approxthm} below). Furthermore, the matrices can be renormalized so that the variance of the non-zero entries of $\mat Y_{N,k}$ is one. The correlations between the $(i,j)$-entry and $(j,i)$-entry might change and be $N$ dependent, but our results are independent of their values.  In Section \ref{sec:truncation}, we give a truncation argument and use hats and superscripts to denote this truncation.  For notational convenience, we omit the hats and subscripts for the remainder of this section.

\begin{lemma}[Self-consistent equation]
\label{approxthm}
Let $M > 0$. Under the assumptions of Theorem \ref{thm:main}, with the additional assumptions that $\zeta_k = 0$ and $\xi_{k,1}, \xi_{k,2}$ are bounded by $N^{\delta}$, for some $0 < \delta < 1/2$, we have
$$ \sup_{|z| \leq M, |\eta| \leq M} \| \E \mat{\Gamma}_N(\mat{q}) - \mat{\Gamma}(\mat{q}) \| = O_{M} \left( N^{-1/2} |\Im(\eta)|^{-5} \right). $$
\end{lemma}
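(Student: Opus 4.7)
The plan is to derive a perturbed version of the matrix Dyson equation \eqref{gammadef} for $\E \mat{\Gamma}_N(\mat{q})$ and then invoke quantitative stability at the solution $\mat{\Gamma}(\mat{q})$. For each $1 \leq k \leq N$, permute rows and columns so that the $k$-th position within each of the $2m$ blocks of $\mat{H}_N - \mat{q} \otimes \mat{I}_N$ is listed first; since the diagonal entries of each $\mat{Y}_{N,c}$ have been truncated to zero, the resulting top-left $2m\times 2m$ corner is exactly $-\mat{q}$, and the bottom-right block is (up to the same permutation) the matrix defining $\mat{R}_N^{(k)}$ in \eqref{kres}. The partitioned inverse formula \eqref{eq:schur} then yields
\[
\mat{R}_{N;kk}(\mat{q}) = -\bigl(\mat{q} + \mat{T}_k\bigr)^{-1}, \qquad (\mat{T}_k)_{ab} = \sum_{c,d=1}^{2m} \sum_{j,l \neq k} \mat{H}_N^{ac}[k,j]\,(\mat{R}_N^{(k)})^{cd}[j,l]\,\mat{H}_N^{db}[l,k].
\]

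Next, compute the mean and fluctuations of $\mat{T}_k$. Since the $k$-th row/column entries of each $\mat{Y}_{N,c}$ are independent of $\mat{R}_N^{(k)}$, the covariance structure encoded in $\sigma(a,c;d,b)$ --- nonzero only when $c=a'$ and $b=d'$ --- gives $\E \mat{T}_k = \mat{\Sigma}(\E \mat{\Gamma}_N^{(k)}) + O(N^{-1}|\Im\eta|^{-1})$, where the error absorbs the missing $j=k$ and $l=k$ diagonal contributions. The rank bound \eqref{rescomp} then replaces $\E \mat{\Gamma}_N^{(k)}$ by $\E \mat{\Gamma}_N$ at the same order. For the fluctuations of $\mat{T}_k$ around its mean, the truncation $|\xi_{k,i}|\leq N^\delta$ makes the bilinear form amenable to a Hanson--Wright / Burkholder-martingale bound modelled on the proof of Lemma \ref{lemma:concentrate}, yielding $\E\|\mat{T}_k - \E \mat{T}_k\|^p = O_p(N^{-p/2}|\Im\eta|^{-p})$ for every $p \geq 2$. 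Averaging via $\mat{\Gamma}_N = \frac{1}{N}\sum_k \mat{R}_{N;kk}$ and using the resolvent identity $(\mat{q} + \mat{T}_k)^{-1} - (\mat{q} + \E \mat{T}_k)^{-1} = -(\mat{q} + \E \mat{T}_k)^{-1}(\mat{T}_k - \E \mat{T}_k)(\mat{q} + \mat{T}_k)^{-1}$ (with both inverse norms bounded by $|\Im\eta|^{-1}$) produces
\[
\E \mat{\Gamma}_N(\mat{q}) = -\bigl(\mat{q} + \mat{\Sigma}(\E \mat{\Gamma}_N(\mat{q}))\bigr)^{-1} + \mat{E}_N(\mat{q}), \qquad \|\mat{E}_N(\mat{q})\| = O\bigl(N^{-1/2}|\Im\eta|^{-c}\bigr),
\]
for some finite $c$.

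Finally, the limiting equation \eqref{gammadef} admits $\mat{\Gamma}(\mat{q})$ as its unique solution with positive imaginary part, and the linearization $\mathcal{L} := \mathrm{id} - \mat{\Gamma}(\mat{q})\,\mat{\Sigma}(\cdot)\,\mat{\Gamma}(\mat{q})$ on $2m\times 2m$ matrices is invertible with $\|\mathcal{L}^{-1}\| = O(|\Im\eta|^{-c'})$ uniformly on the compact range of $(z,\eta)$; applying $\mathcal{L}^{-1}$ to $\mat{E}_N$ converts the perturbation into the claimed $O(N^{-1/2}|\Im\eta|^{-5})$ bound. The main obstacle is this last stability step: the Schur-complement derivation and the concentration estimates are essentially mechanical once the block sparsity of $\mat{H}_N$ (which enforces $c=a'$, $b=d'$) is respected, whereas the invertibility of $\mathcal{L}$ with an explicit polynomial dependence on $|\Im\eta|^{-1}$ requires exploiting both the special form of $\mat{\Sigma}$ (which preserves the off-diagonal pattern of $\mat{q}^{-1}$ used to guess the shape of $\mat{\Gamma}$) and the explicit representation of $\mat{\Gamma}(\mat{q})$ via $a(\mat{q})$ from \eqref{defa}. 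The exponent $5$ in the final bound arises from compounding this stability constant with the resolvent norms accumulated in the martingale concentration step.
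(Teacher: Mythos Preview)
Your derivation of the perturbed Dyson equation
\[
\E \mat{\Gamma}_N(\mat{q}) = -\bigl(\mat{q} + \mat{\Sigma}(\E \mat{\Gamma}_N(\mat{q}))\bigr)^{-1} + \mat{E}_N(\mat{q})
\]
via the Schur complement and concentration of the bilinear form $\mat{T}_k$ is essentially the paper's Lemma~\ref{error}, and this part is fine.

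The divergence is in the stability step, and your version has a real gap. You propose to linearize at $\mat{\Gamma}(\mat{q})$ and apply $\mathcal{L}^{-1}$ to $\mat{E}_N$; but the equation is nonlinear in $\E\mat{\Gamma}_N$, so writing $\Delta := \E\mat{\Gamma}_N - \mat{\Gamma}$ and expanding gives $\mathcal{L}(\Delta) = \mat{E}_N + (\text{quadratic in }\Delta)$. Controlling the quadratic remainder requires an a priori smallness of $\Delta$ (typically obtained by a continuity argument in $\Im\eta$ from large $\Im\eta$), and bounding $\|\mathcal{L}^{-1}\|$ polynomially in $|\Im\eta|^{-1}$ is itself nontrivial --- you acknowledge this is the ``main obstacle'' but do not carry it out. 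As written, ``applying $\mathcal{L}^{-1}$ to $\mat{E}_N$'' is not a proof.

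The paper avoids this entirely with a shift trick: having obtained $\E\mat{\Gamma}_N = -(\mat{q} + \mat{\Sigma}(\E\mat{\Gamma}_N))^{-1} + \epsilon_N'$, it sets $\mat{q}_N := \mat{q} + \mat{\Sigma}(\epsilon_N')$ and observes that $\E\mat{\Gamma}_N - \epsilon_N'$ satisfies \emph{exactly} the fixed-point equation \eqref{gammadef} at the shifted parameter $\mat{q}_N$. Since $\|\epsilon_N'\| \leq |\Im\eta|/2$ in the relevant regime, $\mat{q}_N$ still has positive imaginary part, so the uniqueness result from \cite{HFS} forces $\E\mat{\Gamma}_N - \epsilon_N' = \mat{\Gamma}(\mat{q}_N)$. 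The conclusion then follows from the elementary Lipschitz bound $\|\mat{\Gamma}(\mat{q}_N) - \mat{\Gamma}(\mat{q})\| = O(|\Im\eta|^{-2}\|\epsilon_N'\|)$. This bypasses both the linearization operator and any bootstrap, and is where the exponent $5$ actually comes from: two powers from $\epsilon_N'$'s definition, one from Lemma~\ref{error}, and two from the Lipschitz estimate on $\mat{\Gamma}$.
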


We are interested in the resolvent evaluated at $\mat q$, but it can be defined at any $2m \times 2m$ matrix with positive imaginary part. 

Indeed, let  $\tilde{ \mat q}$ be a $2m \times 2m$ matrix whose imaginary part is a positive definite matrix. We then expand our definition of the resolvent to $\mat R_N(\tilde{ \mat q}) = ( \mat H_N - \tilde{ \mat 	q} \otimes \mat I_N )^{-1}$. We still have the trivial bound $\| \mat R_N(\tilde{ \mat q}) \| \leq \|\Im(\tilde{ \mat q})^{-1}\|$; see for instance \cite[Lemma 3.1]{HT}.

We begin by computing the expectation of the $2m \times 2m$ matrix formed by taking a diagonal entry from each block of the resolvent. By exchangeability, this is also the expectation of the partial trace. 

By \eqref{eq:schur}, we have
\begin{align*}
\mat R_{N;11}(\tilde{ \mat q}) = -( \tilde{ \mat q}  +\mat H_{N;1}^{(1)*} \mat R_N^{(1)}(\tilde{ \mat q}) \mat H_{N;1}^{(1)} )^{-1}.
\end{align*}

\begin{lemma}
\label{error}
Let $\mat H_N$ be as in Lemma \ref{approxthm}. Let $\mat R_N(\tilde{ \mat q}) = ( \mat H_N - \tilde{ \mat q} \otimes \mat I_N )^{-1}$ and $\mat \Gamma_N(\tilde{ \mat q})= (\mat I_{2m} \otimes  \frac{1}{N} \tr)  \mat R_N(\tilde{ \mat q}) $, then
 \[\E[  \|  \mat\Sigma( \E[ \mat \Gamma_N(\tilde{ \mat q}) ] )-  \mat H_{N;1}^{(1)*} \mat R_N^{(1)}(\tilde{ \mat q}) \mat H_{N;1}^{(1)} \|]   = O( N^{-1/2} \| \Im(\mat{ \tilde q})^{-1}\| ) \]
\end{lemma}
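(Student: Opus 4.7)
The plan is to exploit the fact that $\mat R_N^{(1)}$ and $\mat H_{N;1}^{(1)}$ are independent (they depend on disjoint sets of entries of $\mat Y_{N,1},\dots,\mat Y_{N,m}$). First I would expand entrywise
\[
  \mat Q_{ab} := [\mat H_{N;1}^{(1)*}\mat R_N^{(1)}(\tilde{\mat q})\mat H_{N;1}^{(1)}]_{ab} = \sum_{c,d=1}^{2m}\sum_{i,j\neq 1} H_{1i}^{ac}\,[\mat R_N^{(1)}(\tilde{\mat q})]^{cd}_{ij}\,H_{j1}^{db},
\]
so that the difference we must bound naturally splits through $\E[\mat Q\mid \mat R_N^{(1)}]$, with $\mat Q_{ab}$ being, conditional on $\mat R_N^{(1)}$, a quadratic form in the independent vector $\mat H_{N;1}^{(1)}$.

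Next I would compute the conditional expectation. Since $\E[H_{1i}^{ac}H_{j1}^{db}] = \delta_{ij}\sigma(a,c;d,b)/N$ for $i,j\neq 1$ (immediate from the definition of $\sigma(a,c;d,b)$ and the independence of distinct off-diagonal columns), one obtains
\[
  \E[\mat Q_{ab}\mid \mat R_N^{(1)}] = \mat\Sigma(\mat\Gamma_N^{(1)})_{ab} - \frac{1}{N}\sum_{c,d}\sigma(a,c;d,b)\,[\mat R_N^{(1)}]^{cd}_{11}.
\]
The second term is $O(\|\Im(\tilde{\mat q})^{-1}\|/N)$ (using $|[\mat R_N^{(1)}]^{cd}_{11}| \le \|\mat R_N^{(1)}\| \le \|\Im(\tilde{\mat q})^{-1}\|$), and the resolvent identity in the form \eqref{rescomp} gives $\|\mat\Gamma_N - \mat\Gamma_N^{(1)}\| = O(\|\Im(\tilde{\mat q})^{-1}\|/N)$. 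Taking expectations replaces $\E\mat\Gamma_N^{(1)}$ by $\E\mat\Gamma_N$ at a cost of $O(\|\Im(\tilde{\mat q})^{-1}\|/N)$, which is absorbed into the target bound.

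It remains to control $\E\|\mat Q - \E\mat Q\|$, which I would split as $\E\|\mat Q - \E[\mat Q\mid \mat R_N^{(1)}]\| + \E\|\E[\mat Q\mid \mat R_N^{(1)}] - \E\mat Q\|$. The first piece is a standard variance estimate for a quadratic form: under the truncation of Lemma \ref{approxthm} the entries of $\mat H_{N;1}^{(1)}$ have variance $O(1/N)$ and are bounded, while $\|\mat R_N^{(1)}\|_F^2 \le 2mN\|\Im(\tilde{\mat q})^{-1}\|^2$, giving $\var(\mat Q_{ab}\mid \mat R_N^{(1)}) \lesssim \|\Im(\tilde{\mat q})^{-1}\|^2/N$ and, by Jensen, the $L^1$ rate $\|\Im(\tilde{\mat q})^{-1}\|/\sqrt N$. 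The second piece coincides, up to the same $O(\|\Im(\tilde{\mat q})^{-1}\|/N)$ diagonal correction, with $\E\|\mat\Sigma(\mat\Gamma_N^{(1)}-\E\mat\Gamma_N^{(1)})\|$, and is handled by the same martingale/Burkholder argument \eqref{pthest} used to prove Lemma \ref{lemma:concentrate}, also yielding $O(\|\Im(\tilde{\mat q})^{-1}\|/\sqrt N)$. Summing over the $(2m)^2 = O(1)$ matrix entries gives the claimed bound.

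The main obstacle I anticipate is the variance computation for $\mat Q_{ab}$: the elliptic correlations $\rho_k$ produce non-vanishing cross-covariances such as $\E[H_{1i}^{ac}H_{i1}^{db}]$ whenever the indices align with the $(i,j)\leftrightarrow(j,i)$ coupling, which must be added to the standard Isserlis-type pairing when expanding $\E|\mat Q_{ab}|^2 - |\E\mat Q_{ab}|^2$. These extra pairings still satisfy a Frobenius-scale bound of the same order, so the final $N^{-1/2}$ rate is unaffected, but the bookkeeping of all block-index pairings—uniform in the auxiliary matrix $\tilde{\mat q}$—is what must be executed carefully.
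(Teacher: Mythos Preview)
Your proposal is correct and follows essentially the same route as the paper. The paper decomposes $\mat\Sigma(\E\mat\Gamma_N) - \mat H_{N;1}^{(1)*}\mat R_N^{(1)}\mat H_{N;1}^{(1)}$ through the intermediate quantities $\mat\Sigma(\mat\Gamma_N)$ and $\mat\Sigma(\mat\Gamma_N^{(1)})$ and uses the same three ingredients you list---the rank bound \eqref{rescomp}, the concentration estimate \eqref{pthest}, and the conditional-expectation/variance computation for the quadratic form (where the paper records the correction as $-(N\tilde{\mat q})^{-1}$ and gets the slightly sharper variance $O(N^{2\delta-2}\|\Im(\tilde{\mat q})^{-1}\|^2)$ via the entrywise truncation rather than your Frobenius estimate, but either suffices for the $N^{-1/2}$ rate).
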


\begin{proof} 
We divide our estimate into three parts, the first two bounds follow from estimates in Section \ref{concentrationsection}, replacing the bound $\| \mat R_N(\mat q)\| \leq |\Im(\eta)|^{-1}$ with $\| \mat R_N(\tilde{ \mat q}) \| \leq  \| \Im(\tilde{ \mat q})^{-1} \|$.
From \eqref{rescomp}, we have the deterministic bound
\[\| \mat \Gamma_N(\tilde{ \mat q})- \mat \Gamma_N^{(1)}(\tilde{ \mat q}) \| \leq K N^{-1}  \|\Im(\tilde{ \mat q})^{-1}\| .\] 
Then using the concentration inequality \eqref{pthest} along with Jensen's inequality,
\[\E[\| \mat \Gamma_N - \E[\mat \Gamma_N] \|] \leq  K N^{-1/2} \|\Im(\tilde{ \mat q})^{-1}\| .\] 
Finally, we note that
\begin{align*}
\E_{1}[ \mat H_{N;1}^{(1)*} \mat R_N^{(1)} \mat H_{N;1}^{(1)} ] = \mat \Sigma( \mat \Gamma_N^{(1)} ) -(N \mat{ \tilde q} )^{-1}.
\end{align*}
Recall that $\E_{1}$ denotes conditional expectation with respect to the first row and column of each matrix $Y_{N,1}, \ldots, Y_{N,m}$.
The last term appears because $\mat R_{N,11}^{(1)} = \mat{\tilde{q}}^{-1}$ but $\mat H_{N;11}^{(1)} =\mat  0$.
Then a direct computation, using the truncation on the matrix entries and the trivial bound $\|\mat R^{(1)} \| \leq \|\Im(\mat{ \tilde{ q}})^{-1}\|$, yields the bound
\begin{align*}
\E[|(\mat H_{N;1}^{(1)*} \mat R_N^{(1)} \mat H_{N;1}^{(1)} - \mat \Sigma( \mat \Gamma_N^{(1)} ))^{ab}|^2] \leq 
\frac{K N^{2\delta}}{N^2} \|\Im(\tilde{ \mat q})^{-2}\|.
\end{align*}


\end{proof}

Let $\mat \epsilon_{N} :=  - \mat\Sigma( \E[ \mat \Gamma_N ] )+  \mat H_{N;1}^{(1)*} \mat R_N^{(1)} \mat H_{N;1}^{(1)} $ be the error term in the previous lemma. 
We now begin the proof of Lemma \ref{approxthm}.

\begin{proof}[Proof of Lemma \ref{approxthm}]
We first observe that if $|\Im \eta| \leq 4^{1/4} N^{-1/8}$ then 
\[ \| \E \mat{\Gamma}_N(\mat{q}) - \mat{\Gamma}(\mat{q}) \|  \leq 2 \Im(\eta)^{-1} = O( N^{-1/2} |\Im(\eta)^{-5}|),   \]
so we will assume $|\Im(\eta)| > 4^{1/4} N^{-1/8}$. This condition will be useful when bounding $\epsilon_N'$ (defined below). 

From Lemma \ref{error}, $\E[\mat \Gamma_N(\mat q)]$ approximately satisfies the defining equation for $\mat \Gamma(\mat q)$. We will now show we can deform $\E[\mat \Gamma_N(\mat q)]$ so that it exactly satisfies the defining equation for $\mat \Gamma(\mat q)$ and then finish by bounding the difference between $\E[\mat \Gamma_N(\mat q)]$ and the deformed version.  Indeed,
\begin{align}
\label{expeq}
\E[\mat \Gamma_N(\mat{q}) ]  &= \E[\mat R_{N;11}(\mat{q})]= - \E[( \mat q  + \mat \Sigma( \E[ \mat  \Gamma_N(\mat{q})] ) + \mat \epsilon_N)^{-1} ] \notag\\
&= - (\mat  q  + \mat \Sigma(\E[\mat  \Gamma_N(\mat{q}) ] ))^{-1} \E[1 +  \mat \epsilon_N     (\mat q  + \mat \Sigma(\E[ \mat  \Gamma_N(\mat{q}) ])+\mat \epsilon_N )^{-1}].  
\end{align}
Lemma \ref{error} gives the following bound on the second term
which then implies
\[\| (\mat  q  + \mat \Sigma(\E[\mat  \Gamma_N(\mat q) ] ))^{-1}\| \leq 2 |\Im(\eta)|^{-1}. \]

Returning to \eqref{expeq}, we obtain
\begin{align*}
\E[\mat \Gamma_N (\mat q)]   &= - (\mat  q  + \mat \Sigma(\E[\mat  \Gamma_N(\mat q)] ))^{-1}  + \epsilon_N',
\end{align*}
where 
\[ \epsilon_N' := - \E[    (\mat q  + \mat \Sigma(\E[ \mat  \Gamma_N(\mat q)] ))^{-1}   \mat \epsilon_N   \mat R_{N;11}(\mat q)]. \]
The previous estimates show 
\[\| \epsilon_N' \|\leq 2 |\Im(\eta)|^{-2} \E[\| \mat  \epsilon_N \|] \leq 2 |\Im(\eta)|^{-3} N^{-1/2} \leq \frac{|\Im(\eta)| }{2}. \]
The last inequality uses our assumption that $\Im(\eta) > 4^{1/4} N^{-1/8}$.

Let $\mat q_N := \mat q +  \mat \Sigma(\mat \epsilon_N')$. First note that $\| \Im(\mat q_N)\| \geq \Im(\eta)/2$, then observe that:
\[\E[ \mat \Gamma_N(\mat q)] - \epsilon_N' = - (\mat q_N + \Sigma( \E[ \mat \Gamma_N(\mat q)]   -\mat \epsilon_N')  )^{-1};\]
so $\E[ \mat \Gamma_N(\mat q)] - \epsilon_N'$ satisfies the same equation as $\mat \Gamma(\mat  q_N)$, then by uniqueness of the solution 
\[\E[ \mat \Gamma_N(\mat q)] - \mat \epsilon_N'=\mat \Gamma(\mat q_N) .\]
%
%

%
Finally, we conclude that
\[ \E[\mat \Gamma_N(\mat q)] - \mat \Gamma(\mat q) =  \mat \Gamma(\mat q_N)) + \epsilon_N'  - \mat \Gamma(\mat q). \]
Since the right-hand side is bounded uniformly in norm by a universal constant times $ \|\epsilon_N'\| (1+|\Im(\eta)|^{-2})= O(|\Im(\eta)|^{-5}N^{-1/2})$, the proof is complete.
\end{proof}

\section{Least singular value bound} \label{sec:singularvalue}

This section is devoted to bounding the least singular value of the matrix $\frac{1}{\sqrt{N}}( \mat{Y}_N + \mat{A}_N) - z \mat{I}$.  In particular, we will prove the following theorem.  

\begin{theorem}[Least singular value bound] \label{thm:least-sing-value}
Under the assumptions of Theorem \ref{thm:main}, there exists $A >0$ such that for almost every $z \in \mathbb{C}$, almost surely
$$ \lim_{N \to \infty} \indicator{\sigma_{mN}( N^{-1/2} (\mat{Y}_N + \mat{A}_N) - z\mat{I}) \leq N^{-A}} = 0. $$
\end{theorem}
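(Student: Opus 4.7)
The plan is to reduce the least singular value of $\mat{M} := \frac{1}{\sqrt{N}}(\mat{Y}_N + \mat{A}_N) - z\mat{I}_{mN}$ to an almost-sure polynomial lower bound on the smallest singular values of individual elliptic matrices perturbed by deterministic matrices with polynomially bounded entries. Writing $\mat{B}_k := \frac{1}{\sqrt{N}}(\mat{Y}_{N,k} + \mat{A}_{N,k})$ and $\mat{B} := \frac{1}{\sqrt{N}}(\mat{Y}_N + \mat{A}_N)$, a direct block computation shows that $\mat{B}^m$ is block diagonal, with $i$-th diagonal block equal to the cyclic product $\mat{P}_i := \mat{B}_i \mat{B}_{i+1} \cdots \mat{B}_{i+m-1}$ (indices modulo $m$). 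The polynomial identity $(\mat{B} - z\mat{I})(\mat{B}^{m-1} + z\mat{B}^{m-2} + \cdots + z^{m-1}\mat{I}) = \mat{B}^m - z^m \mat{I}$ then yields
\[
\sigma_{mN}(\mat{M}) \geq \frac{\min_{1 \leq i \leq m} \sigma_N(\mat{P}_i - z^m \mat{I}_N)}{\|\mat{B}^{m-1} + z \mat{B}^{m-2} + \cdots + z^{m-1}\mat{I}\|}.
\]
The denominator is polynomially bounded in $N$ almost surely, since $\|\mat{B}\| \leq \|\mat{Y}_N\|/\sqrt{N} + \|\mat{A}_N\|_2/\sqrt{N} = O(N^{3/2})$ by standard operator norm bounds together with \eqref{eq:Aassump}. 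It therefore suffices, for fixed $z \neq 0$, to obtain an almost-sure polynomial lower bound on each $\sigma_N(\mat{P}_i - z^m \mat{I}_N)$.

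To handle a single cyclic product I would peel off one factor and reduce to a single perturbed elliptic matrix. Fix $i$ and, after relabeling, suppose $\mat{P}_i = \mat{B}_m \mat{B}_1 \cdots \mat{B}_{m-1}$. If $x$ is a unit vector with $\|(\mat{P}_i - z^m\mat{I})x\| \leq \varepsilon$, set $w := \mat{B}_1 \cdots \mat{B}_{m-1} x$ and use $x = (\mat{B}_1 \cdots \mat{B}_{m-1})^{-1} w$ (valid whenever the inner product is invertible) to rewrite the equation as $\bigl(\mat{B}_m - z^m (\mat{B}_1 \cdots \mat{B}_{m-1})^{-1}\bigr) w = (\mat{P}_i - z^m\mat{I})x$. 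Combined with $\|w\| \geq \|x\|/\|(\mat{B}_1 \cdots \mat{B}_{m-1})^{-1}\|$ this gives
\[
\sigma_N(\mat{P}_i - z^m\mat{I}) \geq \frac{\sigma_N\bigl(\mat{B}_m - z^m (\mat{B}_1\cdots\mat{B}_{m-1})^{-1}\bigr)}{\|(\mat{B}_1\cdots\mat{B}_{m-1})^{-1}\|}.
\]
The denominator is controlled by sub-multiplicativity, $\sigma_N(\mat{B}_1 \cdots \mat{B}_{m-1}) \geq \prod_{k=1}^{m-1} \sigma_N(\mat{B}_k)$, and each factor is bounded below by $N^{-C}$ almost surely by the individual least singular value estimate for the elliptic matrix $\mat{Y}_{N,k} + \mat{A}_{N,k}$. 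For the numerator, I would condition on $\mat{Y}_{N,1}, \ldots, \mat{Y}_{N,m-1}$ and define $\mat{F}_{N,m} := \mat{A}_{N,m} - z^m \sqrt{N}\,(\mat{B}_1\cdots\mat{B}_{m-1})^{-1}$, so that $\mat{B}_m - z^m(\mat{B}_1\cdots\mat{B}_{m-1})^{-1} = \frac{1}{\sqrt{N}}(\mat{Y}_{N,m} + \mat{F}_{N,m})$. On the almost-sure event that $\|(\mat{B}_1\cdots\mat{B}_{m-1})^{-1}\| \leq N^{C'}$, the conditionally deterministic matrix $\mat{F}_{N,m}$ has entries bounded by $N^\alpha$ for some $\alpha > 0$; since $\mat{Y}_{N,m}$ is independent of $\mat{F}_{N,m}$, the individual least singular value bound applies conditionally and then unconditionally by Fubini, yielding $\sigma_N(\mat{Y}_{N,m} + \mat{F}_{N,m}) \geq N^{1/2 - A''}$ almost surely. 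Chaining the estimates and taking a union bound over $i$ produces $\sigma_{mN}(\mat{M}) \geq N^{-A}$ almost surely.

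The main obstacle is supplying the key ingredient: an almost-sure polynomial lower bound on $\sigma_N(\mat{Y}_{N,k} + \mat{F}_{N,k})$ for an arbitrary deterministic perturbation $\mat{F}_{N,k}$ with polynomially bounded entries. In the iid case this is due to Tao--Vu, and for real elliptic matrices it requires a separate analysis that, as emphasized in the paper's remark, genuinely needs $|\rho_k| < 1$: when $\rho_k = 1$ the off-diagonal pairs $(y_{ij}, y_{ji})$ collapse to a single random variable and the small-ball estimates underlying such bounds degenerate. Modulo that single input, the remainder is routine bookkeeping, promoting polynomially small probability bounds to almost-sure eventual bounds via Borel--Cantelli and tracking the exponents $C$, $C'$, $A''$ consistently through the chain of inequalities.
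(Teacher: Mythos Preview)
Your proposal is correct and follows essentially the same strategy as the paper. Both arguments reduce the block matrix to a product of the individual $\mat{B}_k$'s minus a scalar, then peel off one factor, condition on the remaining matrices, and invoke the least singular value bound for a single perturbed elliptic matrix (Theorem \ref{thm:ellipticsv}, from \cite{NgO}), which you correctly identify as the one external input requiring $|\rho_k|<1$.

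The only real difference is the initial reduction. The paper computes the block structure of $(\mat{B}-z\mat{I})^{-1}$ directly via partitioned inverses, finding that each $N\times N$ block has the form $z^\kappa \mat{V}_{N,j_1}\cdots\mat{V}_{N,j_l}(\mat{V}_{N,i_1}\cdots\mat{V}_{N,i_q}-z^r\mat{I})^{-1}$ with distinct $i_1,\ldots,i_q$; it then bounds the front product crudely and attacks the inverse factor exactly as you do. Your route through the polynomial identity $(\mat{B}-z\mat{I})\sum_{j=0}^{m-1}z^j\mat{B}^{m-1-j}=\mat{B}^m-z^m\mat{I}$ is arguably cleaner: it avoids the block-inverse bookkeeping and lands immediately on the cyclic products $\mat{P}_i-z^m\mat{I}$, at the cost of carrying the harmless polynomial factor $\|\sum z^j\mat{B}^{m-1-j}\|$ in the denominator. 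After that point the two arguments are identical, including the conditioning step and the use of Borel--Cantelli to upgrade to almost-sure bounds.
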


In order to prove Theorem \ref{thm:least-sing-value}, we will need the following bound from \cite{NgO}.  

\begin{theorem}[Bound on the least singular value for perturbed random matrices] \label{thm:ellipticsv}
For each $1 \leq k \leq m$, let $\mat{F}_{N,k}$ be a $N \times N$ complex deterministic matrix whose entries are bound in absolute value by $N^{\alpha}$.  Then, under the assumptions of Theorem \ref{thm:main}, for any $B > 0$, there exists $A>0$ (depending on $B, \alpha$, and $\rho_k:= \E[\xi_{k,1} \xi_{k,2}]$ for $k=1,\ldots, m$) such that
$$ \sup_{1 \leq k \leq m} \Prob \left( \sigma_N(\mat{Y}_{N,k} + \mat{F}_{N,k}) \leq N^{-A} \right) = O(N^{-B}). $$
\end{theorem}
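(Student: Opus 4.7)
The plan is to prove Theorem~\ref{thm:ellipticsv} by adapting the Rudelson--Vershynin/Tao--Vu framework for least singular values of random matrices to the elliptic setting, leveraging the assumption $|\rho_k|<1$ to extract components of each row that are essentially independent of the corresponding column. Throughout, fix $k$, drop the subscript, and write $\mat M := \mat Y_N + \mat F_N$. Since $\|\mat F_N\|\le N^{1+\alpha}$ and $\|\mat Y_N\|\le N^{O(1)}$ with overwhelming probability (by a standard moment bound using Assumption~\ref{ass:atom}), we may condition on the event $\|\mat M\|\le N^{K}$ for a large constant $K$, losing only a polynomially small probability. The strategy is to decompose the unit sphere $S^{N-1}$ into compressible and incompressible vectors and bound $\inf\|\mat M v\|$ on each piece separately.

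For compressible vectors, fix small constants $c,\delta>0$ and call $v$ compressible if it lies within $\ell^2$-distance $\delta$ of a vector supported on at most $cN$ coordinates. For a single fixed unit $v$, I would prove a concentration bound of the form $\Prob(\|\mat M v\| \le \kappa\sqrt{N}) \le \exp(-\Omega(N))$ for a small absolute constant $\kappa$, by splitting $\mat M v = \mat Y_N v + \mat F_N v$ and applying a Bernstein/Talagrand-type inequality coordinatewise to $\mat Y_N v$ (using the $(2+\tau)$-moment assumption together with a standard truncation). Next, I would take an $N^{-K'}$-net $\mathcal N$ on the compressible set; by a volume bound $|\mathcal N| \le \binom{N}{cN}(N^{K'+K})^{cN} = \exp(O(cN\log N))$, which for sufficiently small $c$ is dominated by the $\exp(-\Omega(N))$ concentration, so the union bound eliminates all compressible $v$. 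The perturbation $\mat F_N$ affects only the net scale, which enters logarithmically.

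For incompressible vectors I would invoke the Rudelson--Vershynin invertibility-via-distance lemma: for incompressible $v$,
\[
\|\mat M v\| \;\ge\; \frac{c}{\sqrt{N}}\min_{1\le j\le N}\dist\bigl(R_j,\; H_j\bigr),
\]
where $R_j$ is the $j$th row of $\mat M$ and $H_j$ is the span of the remaining rows. It therefore suffices to show $\dist(R_j, H_j)\ge N^{-A}$ with probability $1-O(N^{-B-1})$ for each $j$. Pick a unit normal $n$ to $H_j$ and note $\dist(R_j,H_j)\ge|\langle R_j, n\rangle|$. The key subtlety is that $n$ depends on column $j$ of $\mat Y_N$ because the other rows contain those entries. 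To decouple, I would use the assumption $|\rho|<1$: the random variable $\xi_1-\rho\,\xi_2$ is uncorrelated with $\xi_2$ and has variance $1-\rho^2>0$, so writing the $(j,i)$-entry as $\rho\cdot(\text{entry }(i,j)) + w_{ji}$ with $w_{ji}$ a mean-zero variable independent of column $j$, we obtain a decomposition $R_j = R_j^{\mathrm{dep}} + R_j^{\mathrm{indep}}$ where $R_j^{\mathrm{indep}}$ is independent of $H_j$. Then $\langle R_j^{\mathrm{indep}}, n\rangle$ is a sum of independent random variables weighted by $n$, and a small-ball (Littlewood--Offord) estimate gives a bound of the form $\Prob(|\langle R_j^{\mathrm{indep}}, n\rangle|\le t \mid n) \le C t\sqrt{N} + N^{-B-2}$, provided $n$ is itself incompressible.

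The main obstacle is therefore the bootstrap step: showing that the normal vector $n$ to $H_j$ is incompressible (in a quantitative sense) with probability $1-O(N^{-B-2})$, despite the deterministic perturbation $\mat F_N$ possibly having entries as large as $N^{\alpha}$. This I would address by running the same compressible-vector argument on the $(N-1)\times N$ submatrix obtained by deleting row $j$: if $n$ were compressible, one could find a compressible unit vector $v$ with $\|(\mat M^{(j)})^\ast v\|$ very small, contradicting the compressible bound already established (applied to the transpose, whose entries satisfy the same hypotheses after swapping $\xi_{k,1}\leftrightarrow\xi_{k,2}$). Combining the compressible exclusion, the Littlewood--Offord bound, and a union bound over $j$ yields $\sigma_N(\mat M)\ge N^{-A}$ with probability $1-O(N^{-B})$ for $A=A(B,\alpha,\rho_k)$ large enough, completing the proof.
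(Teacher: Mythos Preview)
The paper does not prove Theorem~\ref{thm:ellipticsv}; it is quoted as a known result from \cite{NgO}. Your overall plan---the Rudelson--Vershynin decomposition into compressible and incompressible vectors, followed by a small-ball estimate for the row-to-hyperplane distance---is indeed the framework used in \cite{NgO}, so at the level of strategy you are aligned with the cited source.

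There is, however, a genuine gap in your decoupling step for the incompressible case. You correctly note that $\xi_1 - \rho\,\xi_2$ is \emph{uncorrelated} with $\xi_2$ and has variance $1-\rho^2>0$, but you then assert that $w_{ji} := y_{ji} - \rho\, y_{ij}$ is \emph{independent} of column $j$. Outside the Gaussian case, uncorrelated does not imply independent, and Assumption~\ref{ass:atom} does not rule this out. For a concrete obstruction: take $\xi_2$ uniform on $\{-\sqrt{3/2},\,0,\,\sqrt{3/2}\}$ and set $\xi_1 := 1/\sqrt{2}$ when $\xi_2\ne 0$ and $\xi_1 := -\sqrt{2}$ when $\xi_2=0$. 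Both marginals have mean zero and unit variance, all moments are finite, and $\rho=0$; yet $\xi_1$ is a deterministic function of $\xi_2$. Here your ``independent component'' is $R_j^{\mathrm{indep}}=R_j$ itself, and roughly half of its entries are completely determined by column $j$, so $R_j^{\mathrm{indep}}$ is not independent of $H_j$ and the conditional Littlewood--Offord bound you invoke is unavailable. The passage from the iid to the elliptic setting therefore requires more than a linear regression of the row on the column; one must work with the joint randomness of the $j$th row and column together and extract anti-concentration without ever isolating an independent piece. This is exactly the place where the hypothesis $|\rho_k|<1$ does real work in \cite{NgO}, and it is the main new difficulty compared with the iid case---your sketch does not yet address it.
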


\begin{remark} \label{rem:lsv}
Theorem \ref{thm:ellipticsv} requires item \eqref{item:rhoass} from Assumption \ref{ass:atom}.  In fact, this is the only place in the note where this assumption is required.  In the case $\rho_k=1$, $\mat{Y}_{N,k}$ is a real symmetric Wigner matrix.  Thus, one would need to control the least singular value of $\mat{Y}_{N,k} + \mat{F}_{N,k}$.  A bound was obtained in \cite{NgLSV} when $\mat{F}_{N,k}$ is also symmetric.  The case when $\mat{F}_{N,k}$ is arbitrary appears more difficult.  See Theorem \ref{thm:least-sing-valueWigner} for more details.  The case $\rho_k = -1$ is more delicate since a skew-symmetric matrix of odd degree is always singular.  
\end{remark}

We now verify Theorem \ref{thm:least-sing-value}.  

\begin{proof}[Proof of Theorem \ref{thm:least-sing-value}]
Let $A$ be a large positive constant to be chosen later.  By the Borel-Cantelli lemma, it suffices to show that, for almost every $z \in \mathbb{C}$, 
$$ \Prob \left( \sigma_{mN} \left( N^{-1/2}(\mat{Y}_N + \mat{A}_N) - z\mat{I} \right) \leq N^{-A} \right) = O(N^{-2}). $$
In other words, it suffices to show that
$$ \Prob \left( \left\| \left( \frac{1}{\sqrt{N}} (\mat{Y}_N + \mat{A}_N) - z\mat{I}\right)^{-1} \right\| \geq N^{A} \right) = O(N^{-2}). $$

Let 
$$ \mat{M}_N := \left( \frac{1}{\sqrt{N}} (\mat{Y}_N + \mat{A}_N) - z\mat{I} \right)^{-1}. $$  
We let $\mat{M}_N^{ab}$ denote the $(a,b)$-th $N \times N$ block of $\mat{M}_N$, for $a,b \in \{1,\ldots,m\}$.  Thus, we have
$$ \Prob \left( \| \mat{M}_N \| \geq N^{A} \right) \leq \Prob \left( \text{there exists } a,b \in \{1, \ldots, m\} \text{ with } \| \mat{M}_N^{ab} \| \geq \frac{1}{m^2} N^{A} \right). $$
Therefore, by the union bound, it suffices to show that
$$ \Prob \left( \| \mat{M}^{ab}_N \| \geq N^{A-1} \right) = O(N^{-2}) $$
for all $a,b \in \{1, \ldots, m\}$.  

Fix $a,b \in \{1, \ldots, m\}$, and define 
$$ \mat{V}_{N,k} := \frac{1}{\sqrt{N}} (\mat{Y}_{N,k} + \mat{A}_{N,k}) $$
for each $1 \leq k \leq m$.  By computing the inverse of a partitioned matrix (see, for instance, \cite[Section 0.7.3]{HJ}), we observe that
$$ \mat{M}_N^{ab} = z^\kappa \mat{V}_{N,j_1} \cdots \mat{V}_{N,j_l} \left( \mat{V}_{N,i_1} \cdots \mat{V}_{N,i_q} - z^r \right)^{-1}, $$
where $\kappa, l, q, r$ are non-negative integers no larger than $m$; here $\kappa, l, q, r j_1, \ldots, j_l, i_1, \ldots, i_q$ depend only on $a,b,m$, and the indicies $i_1, \ldots, i_q$ are distinct.   Thus, we obtain
\begin{align*}
	\Prob ( \| \mat{M}_N^{ab} \| \geq N^{A-1} ) &\leq \Prob( |z|^\kappa \| \mat{V}_{N,j_1} \cdots \mat{V}_{N,j_l} \| \geq N^{(A-1)/2}) \\
		&\qquad+ \Prob( \| ( \mat{V}_{N,i_1} \cdots \mat{V}_{N,i_q} - z^r \mat{I})^{-1} \| \geq N^{(A-1)/2} ).
\end{align*}
We will bound each of the terms on the right-hand side separately.  

For the first term, by taking $A$ sufficiently large and applying Markov's inequality, we have
\begin{align*}
	 \Prob( |z|^\kappa \| \mat{V}_{N,j_1} \cdots \mat{V}_{N,j_l} \| \geq N^{(A-1)/2}) &\leq \sum_{s=1}^l \Prob( \| \mat{V}_{N,j_s} \| \geq N^{(A-1)/4m}) \\
	 	&\leq \sum_{s=1}^l \frac{2}{N^{(A-1)/2m}} \frac{1}{N} \left( \E \| \mat{Y}_{N,j_s} \|^2_2 + \| \mat{A}_{N,k} \|_2^2 \right) \\
		&= O(N^{-2}).
\end{align*}
Here we used \eqref{eq:Aassump} to deduce that
$$ \max_{1 \leq k \leq m}\left( \E \| \mat{Y}_{N,j_s} \|^2_2 + \| \mat{A}_{N,k} \|_2^2 \right) = O(N^2). $$
For the second term, we observe that
$$  (\mat{V}_{N,i_1} \cdots \mat{V}_{N,i_q} - z^r \mat{I})^{-1} = \mat{V}_{N,i_q}^{-1} \cdots \mat{V}_{N,i_2}^{-1} ( \mat{V}_{N,i_1} - z^r \mat{V}_{N,i_q}^{-1} \cdots \mat{V}_{N,i_2}^{-1})^{-1}. $$
Thus, 
\begin{align*}
	\Prob &( \| ( \mat{V}_{N,i_1} \cdots \mat{V}_{N,i_q} - z^r \mat{I})^{-1} \| \geq N^{(A-1)/2} ) \\
	& \leq  \Prob (\| \mat{V}_{N,i_q}^{-1} \cdots \mat{V}_{N,i_2}^{-1} \| \geq N^{(A-1)/4}) \\
	& \qquad + \Prob( \| ( \mat{V}_{N,i_1} - z^r \mat{V}_{N,i_q}^{-1} \cdots \mat{V}_{N,i_2}^{-1})^{-1} \| \geq N^{(A-1)/4} ).
\end{align*}
Let $A'$ be a large positive constant.  By taking $A'$ sufficiently large, Theorem \ref{thm:ellipticsv} implies that
\begin{equation} \label{eq:norminverseindiv}
	\Prob \left(\| \mat{V}_{N,i_q}^{-1} \cdots \mat{V}_{N,i_2}^{-1} \| \geq N^{A'}\right) \leq \sum_{s=2}^q \Prob \left(\| \mat{V}_{N,i_s}^{-1} \| \geq N^{A'/m}\right) = O(N^{-2}). 
\end{equation}
Therefore, it suffices to show that
\begin{equation} \label{eq:inversebnd}
	\Prob( \| ( \mat{V}_{N,i_1} - z^r \mat{V}_{N,i_q}^{-1} \cdots \mat{V}_{N,i_2}^{-1})^{-1} \| \geq N^{(A-1)/4} ) = O(N^{-2}).
\end{equation}

Define the event 
$$ \Omega_N := \left\{ \| z^r \mat{V}_{N,i_q}^{-1} \cdots \mat{V}_{N,i_2}^{-1} \| \leq N^{A'+1} \right\}. $$
Then, by \eqref{eq:norminverseindiv}, 
\begin{align} \label{eq:normcomp}
	\Prob(\Omega_N^C) = O(N^{-2}). 
\end{align}
We now exploit the fact that $\mat{Y}_{N,i_1}, \ldots, \mat{Y}_{N,i_q}$ are independent.  Indeed, by freezing the matrices $\mat{Y}_{N,i_2}, \ldots, \mat{Y}_{N,i_q}$ and conditioning on $\Omega_N$, we apply Theorem \ref{thm:ellipticsv} and obtain 
$$ \Prob(  \| ( \mat{V}_{N,i_1} - z^r \mat{V}_{N,i_q}^{-1} \cdots \mat{V}_{N,i_2}^{-1})^{-1} \| \geq N^{(A-1)/4} \mid \Omega_N) = O(N^{-2}) $$
for $A$ sufficiently large.  Combining the bound above with \eqref{eq:normcomp} yields \eqref{eq:inversebnd}, and the proof is complete.  
\end{proof}

\section{Completing the argument} \label{sec:complete}

In this section, we apply the results of the previous sections to complete the proof of Theorem \ref{thm:circular}.  

\subsection{Truncation} \label{sec:truncation}

In order to apply Lemma \ref{approxthm}, we need to consider elliptic random matrices whose entries are bounded by $N^{\delta}$.  To this end, we present a number of standard truncation results below.    The proofs of these results can be found in Appendix \ref{sec:truncationproof}.

Let $(\xi_1, \xi_2)$ be a random vector in $\mathbb{R}^2$.  Let $\delta > 0$.  Set
$$ \tilde{\xi}_i^{(N)} := \xi_i \indicator{|\xi_i| \leq N^\delta} - \E [\xi_i \indicator{|\xi_i| \leq N^\delta} ] $$
and
$$ \hat{\xi}_i^{(N)} := \frac{ \tilde{\xi}_i^{(N)} }{ \sqrt{ \var( \tilde{\xi}_i^{(N)} ) } } $$
for $i = 1,2$.  We verify the following standard truncation result in Appendix \ref{sec:truncationproof}.  

\begin{lemma}[Truncation] \label{lemma:truncation}
Let $(\xi_1, \xi_2)$ be a random vector in $\mathbb{R}^2$, where $\xi_1, \xi_2$ each have mean zero, unit variance, and satisfy 
\begin{equation} \label{eq:M2tau}
	M_{2+\tau} := \E|\xi_1|^{2+\tau} + \E|\xi_2|^{2+\tau} < \infty 
\end{equation}
for some $\tau > 0$.  Set $\rho := \E[\xi_1 \xi_2]$.  Let $\delta > 0$.  Then there exists $N_0$ (depending only on $\delta, \tau$, and $M_{2+\tau}$) such that the following properties hold for all $N \geq N_0$.
\begin{enumerate}[(i)]
\item \label{item:meanvar} For $i=1,2$, $\hat{\xi}_i^{(N)}$ has mean zero, unit variance, and is a.s. bounded in magnitude by $4N^\delta$.  
\item \label{item:varbnd} For $i=1,2$, 
$$ \left| 1 - \var(\tilde{\xi}_i^{(N)}) \right| \leq 2 \frac{M_{2+\tau}}{N^{\delta \tau}}. $$
\item \label{item:rho} For $\hat{\rho}^{(N)} := \E[ \hat{\xi}_1^{(N)} \hat{\xi}_2^{(N)}]$, we have
$$ |\hat{\rho}^{(N)} - \rho| \leq 13 \frac{M_{2+\tau}}{N^{\delta \tau/2}}. $$
\end{enumerate}
\end{lemma}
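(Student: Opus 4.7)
The plan is to prove the three parts by straightforward truncation computations, using Markov's inequality applied to $\E|\xi_i|^{2+\tau}$ to control every tail term and Cauchy--Schwarz to handle the cross-moment $\E[\xi_1\xi_2\,\cdot]$. Since $\xi_1,\xi_2$ each have unit variance, Jensen gives $M_{2+\tau}\geq 2$, so $\sqrt{M_{2+\tau}}\leq M_{2+\tau}$; this will let us state all bounds in terms of $M_{2+\tau}$ with a universal constant.

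I would first establish (ii). Write
\[
\var(\tilde\xi_i^{(N)})=\E[\xi_i^2\indicator{|\xi_i|\leq N^\delta}]-\bigl(\E[\xi_i\indicator{|\xi_i|\leq N^\delta}]\bigr)^2.
\]
The first term differs from $\E\xi_i^2=1$ by $\E[\xi_i^2\indicator{|\xi_i|>N^\delta}]$, which is at most $\E|\xi_i|^{2+\tau}/N^{\delta\tau}\leq M_{2+\tau}/N^{\delta\tau}$ by Markov. Since $\E\xi_i=0$, $\E[\xi_i\indicator{|\xi_i|\leq N^\delta}]=-\E[\xi_i\indicator{|\xi_i|>N^\delta}]$, and $|\E[\xi_i\indicator{|\xi_i|>N^\delta}]|\leq M_{2+\tau}/N^{\delta(1+\tau)}$, so its square is negligible once $N$ is large. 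Combining gives (ii) for $N\geq N_0(\delta,\tau,M_{2+\tau})$. Part (i) is then immediate: $\hat\xi_i^{(N)}$ is centered and unit-variance by construction, while $|\tilde\xi_i^{(N)}|\leq N^\delta+\E|\xi_i|\leq N^\delta+1\leq 2N^\delta$ for large $N$, and (ii) gives $\var(\tilde\xi_i^{(N)})\geq 1/2$, so $|\hat\xi_i^{(N)}|\leq 2\sqrt{2}\,N^\delta<4N^\delta$.

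For (iii), expand
\[
\E[\tilde\xi_1^{(N)}\tilde\xi_2^{(N)}]=\E[\xi_1\xi_2\indicator{|\xi_1|\leq N^\delta,\,|\xi_2|\leq N^\delta}]-\E[\xi_1\indicator{|\xi_1|\leq N^\delta}]\,\E[\xi_2\indicator{|\xi_2|\leq N^\delta}].
\]
The product term is $O(N^{-2\delta(1+\tau)})$ by the bound used above. For the first term, write
\[
\rho-\E[\xi_1\xi_2\indicator{|\xi_1|\leq N^\delta,|\xi_2|\leq N^\delta}]=\E[\xi_1\xi_2\indicator{|\xi_1|>N^\delta}]+\E[\xi_1\xi_2\indicator{|\xi_1|\leq N^\delta,|\xi_2|>N^\delta}],
\]
and apply Cauchy--Schwarz: e.g.\ $|\E[\xi_1\xi_2\indicator{|\xi_1|>N^\delta}]|\leq \sqrt{\E[\xi_1^2\indicator{|\xi_1|>N^\delta}]}\sqrt{\E\xi_2^2}\leq \sqrt{M_{2+\tau}}/N^{\delta\tau/2}$, and analogously for the second. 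Hence $|\E[\tilde\xi_1^{(N)}\tilde\xi_2^{(N)}]-\rho|\leq 2\sqrt{M_{2+\tau}}/N^{\delta\tau/2}+o(1)\leq 3\,M_{2+\tau}/N^{\delta\tau/2}$ for $N$ large. Dividing by $\sqrt{\var(\tilde\xi_1^{(N)})\var(\tilde\xi_2^{(N)})}$, which by (ii) lies in $[1/2,2]$ and differs from $1$ by $O(M_{2+\tau}/N^{\delta\tau})$, and using $|\rho|\leq 1$, a short bookkeeping step produces the stated bound $|\hat\rho^{(N)}-\rho|\leq 13\,M_{2+\tau}/N^{\delta\tau/2}$, with the constant $13$ comfortably absorbing the $\sqrt{2}$ factors and the correction from normalizing the variance.

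There is no substantive obstacle: the lemma is a routine application of moment truncation. The only care needed is to keep the rate $N^{-\delta\tau/2}$ in (iii) (rather than $N^{-\delta\tau}$), which is dictated by the Cauchy--Schwarz step on the cross-moment, and to verify that $N_0$ can be chosen depending only on $\delta,\tau,M_{2+\tau}$—this is clear since every estimate above is in terms of those quantities alone.
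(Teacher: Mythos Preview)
Your proposal is correct and follows essentially the same approach as the paper: both establish (ii) by bounding the truncated second moment and the squared mean via $\E|\xi_i|^{2+\tau}$, deduce (i) from (ii) and the construction, and handle (iii) by first controlling $|\tilde\rho^{(N)}-\rho|$ with Cauchy--Schwarz on the tail and then passing to $\hat\rho^{(N)}$ using the variance bound from (ii). The only cosmetic differences are in how the cross-moment is decomposed (you split on $\{|\xi_1|>N^\delta\}\cup\{|\xi_1|\le N^\delta,|\xi_2|>N^\delta\}$ and pair with $\sqrt{\E\xi_j^2}=1$, whereas the paper writes a slightly different sum of tail terms) and in the intermediate constants; the final bookkeeping to reach the constant $13$ is routine in either case.
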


We now define the truncated matrices $\tilde{\mat{Y}}_N$ and $\hat{\mat{Y}}_N$.  For each $k=1,\ldots,m$, define the $N \times N$ matrices $\tilde{\mat{Y}}_{N,k}$ and $\hat{\mat{Y}}_{N,k}$ with entries 
$$ (\tilde{\mat{Y}}_{N,k})_{ij} := \left\{
	\begin{array}{lr}
       	 (\mat{Y}_{N,k})_{ij} \indicator{|(\mat{Y}_{N,k})_{ij}| \leq N^{\delta}} - \E [ (\mat{Y}_{N,k})_{ij} \indicator{|(\mat{Y}_{N,k})_{ij}| \leq N^{\delta}} ] , & \text{for } i \neq j \\
       	0, & \text{for } i = j
     	\end{array}
   	\right. $$
and
$$ (\hat{\mat{Y}}_{N,k})_{ij} := \left\{
	\begin{array}{lr}
       	 \frac{ (\tilde{\mat{Y}}_{N,k})_{ij} }{ \sqrt{ \var( (\tilde{\mat{Y}}_{N,k})_{ij} ) } }, & \text{for } i \neq j \\
       	0, & \text{for } i = j.
     	\end{array}
   	\right. $$
Define the $mN \times mN$ block matrices
$$ \tilde{\mat{Y}}_N := \begin{bmatrix} 
				0 &  \tilde{\mat{Y}}_{N,1} &     &            & 0       \\
                         		0 & 0    & \tilde{\mat{Y}}_{N,2} &            & 0        \\      
                           		&      & \ddots & \ddots     &         \\
                         		0 &      &     &          0 & \tilde{\mat{Y}}_{N,m-1} \\
                       		\tilde{\mat{Y}}_{N,m} &      &     &            &  0     
				\end{bmatrix} $$
and 
$$ \hat{\mat{Y}}_N := \begin{bmatrix} 
				0 &  \hat{\mat{Y}}_{N,1} &     &            & 0       \\
                         		0 & 0    & \hat{\mat{Y}}_{N,2} &            & 0        \\      
                           		&      & \ddots & \ddots     &         \\
                         		0 &      &     &          0 & \hat{\mat{Y}}_{N,m-1} \\
                       		\hat{\mat{Y}}_{N,m} &      &     &            &  0      
				\end{bmatrix}. $$
				
We will also need the following lemma, whose proof is presented in Appendix \ref{sec:truncationproof}. 

\begin{lemma}[Law of large numbers] \label{lemma:lln}
Under the assumptions of Theorem \ref{thm:main}, for $\delta \tau < 1$, the following properties hold almost surely:
\begin{align}
	\limsup_{N \to \infty} \frac{1}{N^2} \| \mat{Y}_N \|_2^2 < \infty, \label{eq:limsupYN} \\
	\limsup_{N \to \infty} \frac{1}{N^2} \| \hat{\mat{Y}}_N \|_2^2 < \infty, \label{eq:limsuphat} \\
	\lim_{N \to \infty} \frac{N^{\delta \tau}}{N^2} \| \mat{Y}_N - \hat{\mat{Y}}_N \|_2^2 = 0. \label{eq:limdiff}
\end{align}
\end{lemma}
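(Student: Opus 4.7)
The plan is to prove~\eqref{eq:limsupYN} directly from Kolmogorov's strong law of large numbers (SLLN), then attack the sharper bound~\eqref{eq:limdiff} via a decomposition and case analysis (the main obstacle), and finally deduce~\eqref{eq:limsuphat} from the previous two by the triangle inequality. For \eqref{eq:limsupYN}, I will exploit the block decomposition $\|\mat{Y}_N\|_2^2 = \sum_{k=1}^m \|\mat{Y}_{N,k}\|_2^2$: the diagonal sum $\sum_i (y_{ii}^{(k)})^2$ is a sum of $N$ iid copies of $\zeta_k^2$ and contributes $O(N)$ a.s.\ by the SLLN, while the off-diagonal sum $\sum_{i<j}\bigl((y_{ij}^{(k)})^2+(y_{ji}^{(k)})^2\bigr)$ is a sum of $\binom{N}{2}$ iid copies of the nonnegative integrable variable $\xi_{k,1}^2+\xi_{k,2}^2$ (mean $2$), so the SLLN gives $\frac{1}{N^2}\|\mat{Y}_{N,k}\|_2^2 \to 1$ a.s.

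For \eqref{eq:limdiff}, I will decompose block-by-block and entry-by-entry. The diagonal of $\mat{Y}_{N,k}-\hat{\mat{Y}}_{N,k}$ retains only the $\zeta_k(i)$'s (since $\hat y_{ii}=0$) and contributes $O(N)$ a.s., giving $O(N^{\delta\tau-1})\to 0$ after scaling because $\delta\tau<1$. For the off-diagonal entries I will split according to whether $|y_{ij}|\leq N^\delta$. On this event, Lemma~\ref{lemma:truncation}(\ref{item:varbnd}) together with the centering bound $|\E[\xi\,\indicator{|\xi|\leq N^\delta}]|=O(N^{-\delta(1+\tau)})$ yields $|y_{ij}-\hat y_{ij}|^2 \leq C(N^{-2\delta\tau}|y_{ij}|^2 + N^{-2\delta(1+\tau)})$; summing and invoking~\eqref{eq:limsupYN} bounds this contribution by $O(N^{2-2\delta\tau})$ a.s., which is $o(N^{2-\delta\tau})$ as required.

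The hard part will be the tail event $\{|y_{ij}|>N^\delta\}$, where $|\hat y_{ij}|=O(N^{-\delta(1+\tau)})$ and the problem reduces (via the pointwise bound $|y|^2\leq N^{-\delta\tau}|y|^{2+\tau}$ on this set) to showing $\frac{1}{N^2}\sum_{i,j} |y_{ij}|^{2+\tau}\,\indicator{|y_{ij}|>N^\delta} \to 0$ a.s. The expectation of this average equals $\E|\xi|^{2+\tau}\,\indicator{|\xi|>N^\delta}$, which vanishes as $N\to\infty$ by dominated convergence, so what is needed is a triangular-array SLLN upgrade to the a.s.\ statement. I plan to split on the magnitude of $\delta(2+\tau)$: when $\delta(2+\tau)>2$ the union bound gives $\Prob(\exists\, i,j\leq N: |y_{ij}|>N^\delta) \leq M_{2+\tau}N^{2-\delta(2+\tau)}$, which is summable along the dyadic subsequence $N=2^j$ and so Borel--Cantelli forces the sum to vanish identically for large $N$; when $\delta(2+\tau)<2$ the truncated summands $|y_{ij}|^{2+\tau}\,\indicator{|y_{ij}|\leq N^\delta}$ are a.s.\ bounded by $N^{\delta(2+\tau)}$, so Chebyshev yields $\var\bigl(\frac{1}{N^2}\sum \cdot\bigr) = O(N^{\delta(2+\tau)-2})$ summable along $N=2^j$, and combining with the SLLN $\frac{1}{N^2}\sum|y_{ij}|^{2+\tau}\to\E|\xi|^{2+\tau}$ gives the desired a.s.\ convergence by subtraction. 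A standard monotone interpolation covers the gaps between dyadic values, and the borderline case $\delta(2+\tau)=2$ is handled by perturbing $\delta$ slightly.

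Finally, \eqref{eq:limsuphat} is an immediate corollary: the triangle inequality $\|\hat{\mat{Y}}_N\|_2^2 \leq 2\|\mat{Y}_N\|_2^2 + 2\|\mat{Y}_N - \hat{\mat{Y}}_N\|_2^2$ combined with~\eqref{eq:limsupYN} and~\eqref{eq:limdiff} yields a finite a.s.\ $\limsup$ of $\frac{1}{N^2}\|\hat{\mat{Y}}_N\|_2^2$.
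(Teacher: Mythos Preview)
Your proof is correct and rests on the same core ingredients as the paper's: block decomposition, SLLN for the iid entry sums, the pointwise tail bound $|y|^2\indicator{|y|>N^\delta}\leq N^{-\delta\tau}|y|^{2+\tau}$, and Lemma~\ref{lemma:truncation} for the rescaling error. The organization differs in two places. First, the paper proves \eqref{eq:limsuphat} \emph{before} \eqref{eq:limdiff}, via the direct bound $\|\hat{\mat{Y}}_N\|_2^2\leq 4(\|\mat{Y}_N\|_2^2+\E\|\mat{Y}_N\|_2^2)$, and then splits \eqref{eq:limdiff} through the intermediate matrix $\tilde{\mat{Y}}_N$, cleanly separating the tail part $\|\mat{Y}_{N,k}-\tilde{\mat{Y}}_{N,k}\|_2^2$ from the rescaling part $\|\tilde{\mat{Y}}_N-\hat{\mat{Y}}_N\|_2^2$ (the latter bounded by $N^{-2\delta\tau}\|\hat{\mat{Y}}_N\|_2^2$ using \eqref{eq:limsuphat}); you instead split entry-wise on the event $\{|y_{ij}|\leq N^\delta\}$ and recover \eqref{eq:limsuphat} last. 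Second, for the key triangular-array statement $\frac{1}{N^2}\sum_{i,j}|y_{ij}|^{2+\tau}\indicator{|y_{ij}|>N^\delta}\to 0$ a.s., the paper simply invokes the law of large numbers---the implicit argument being the monotone comparison $\indicator{|y|>N^\delta}\leq\indicator{|y|>M}$ for any fixed $M$, so that the $\limsup$ is at most $\E|\xi|^{2+\tau}\indicator{|\xi|>M}$ by the ordinary SLLN, and then $M\to\infty$. Your dyadic Borel--Cantelli/Chebyshev argument reaches the same conclusion but is considerably more elaborate than necessary; the fixed-threshold sandwich avoids the case split on $\delta(2+\tau)$ and the interpolation between dyadic points entirely.
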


Recall that $\mat{H}_N$ is the Hermitization of $\frac{1}{\sqrt{N}} \mat{Y}_N$.  Let $\hat{\mat{H}}_N$ be the Hermitization of the matrix $\frac{1}{\sqrt{N}} \hat{\mat{Y}}_N$.  Let $F^{(N)}_z$ be the ESD of 
$$ \mat{H}_N - \begin{bmatrix} \mat{0} & z \mat{I}_m \\ \bar{z} \mat{I}_m & \mat{0} \end{bmatrix}  \otimes \mat{I}_N $$
for $z \in \mathbb{C}$.  Similarly, let $\hat{F}^{(N)}_z$ be the ESD of 
$$ \hat{\mat{H}}_N - \begin{bmatrix} \mat{0} & z \mat{I}_m \\ \bar{z} \mat{I}_m & \mat{0} \end{bmatrix}  \otimes \mat{I}_N . $$

For two cumulative distribution functions $F$ and $G$, we define the \emph{Levy distance} 
\begin{equation} \label{eq:def:levy}
	L(F, G) := \inf \{ \eps > 0 : F(x - \eps) - \eps \leq G(x) \leq F(x + \eps) + \eps \text{ for all } x \in \mathbb{R} \}.
\end{equation}

It follows that convergence in the metric $L$ implies convergence in distribution.  In fact, in the following lemma, we show that $\hat{F}^{(N)}_z$ approximates $F^{(N)}_z$ in Levy distance.  

\begin{lemma} \label{lemma:truncn}
Under the assumptions of Theorem \ref{thm:main}, for $\delta \tau < 1$, we have a.s. 
$$ \sup_{z \in \mathbb{C}} L(F^{(N)}_z, \hat{F}^{(N)}_z) = o(N^{\delta \tau/3}). $$
\end{lemma}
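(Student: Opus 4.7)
The plan is to reduce the Levy-distance bound to a Hilbert--Schmidt norm estimate on $\mat Y_N - \hat{\mat Y}_N$ by invoking the standard Hoffman--Wielandt type inequality, and then to invoke \eqref{eq:limdiff} from Lemma \ref{lemma:lln}. (Note that, as stated, the target rate should be $o(N^{-\delta\tau/3})$ for the conclusion to be meaningful; I will prove that sharper form.)

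First, I would use Corollary A.41 of \cite{BSbook} (or, equivalently, the rank-plus-Hoffman--Wielandt bound of Bai), which asserts that for any two $n\times n$ Hermitian matrices $\mat A$ and $\mat B$, the Levy distance between their empirical spectral distributions satisfies
\[
L^3\bigl(F^{\mat A},F^{\mat B}\bigr) \;\leq\; \frac{1}{n}\,\|\mat A-\mat B\|_2^2.
\]
Applying this to the $2mN\times 2mN$ Hermitian matrices
\[
\mat A \;=\; \mat H_N - \begin{bmatrix} \mat 0 & z\mat I_m \\ \bar z\mat I_m & \mat 0\end{bmatrix}\otimes \mat I_N, \qquad
\mat B \;=\; \hat{\mat H}_N - \begin{bmatrix} \mat 0 & z\mat I_m \\ \bar z\mat I_m & \mat 0\end{bmatrix}\otimes \mat I_N,
\]
the shift involving $z$ cancels in the difference $\mat A-\mat B$, so the resulting estimate is uniform in $z\in\mathbb C$.

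Second, I would compute
\[
\|\mat A - \mat B\|_2^2 \;=\; \|\mat H_N - \hat{\mat H}_N\|_2^2 \;=\; 2\,\Bigl\|\tfrac{1}{\sqrt N}(\mat Y_N-\hat{\mat Y}_N)\Bigr\|_2^2 \;=\; \frac{2}{N}\,\|\mat Y_N-\hat{\mat Y}_N\|_2^2,
\]
using that Hermitization doubles the Hilbert--Schmidt norm. Substituting into the Hoffman--Wielandt bound (with $n=2mN$) yields
\[
\sup_{z\in\mathbb C} L^3\bigl(F^{(N)}_z,\hat F^{(N)}_z\bigr) \;\leq\; \frac{1}{m N^{2}}\,\|\mat Y_N-\hat{\mat Y}_N\|_2^2.
\]

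Finally, by the almost sure convergence \eqref{eq:limdiff} in Lemma \ref{lemma:lln}, we have $\frac{1}{N^2}\|\mat Y_N-\hat{\mat Y}_N\|_2^2 = o(N^{-\delta\tau})$ a.s., whence
\[
\sup_{z\in\mathbb C} L\bigl(F^{(N)}_z,\hat F^{(N)}_z\bigr) \;=\; o\bigl(N^{-\delta\tau/3}\bigr) \quad \text{a.s.},
\]
which is the claimed rate. There is no real obstacle here: the substantive work has already been done in establishing \eqref{eq:limdiff}, and the only non-trivial point is the observation that the $z$-shift cancels, making the estimate uniform over $z\in\mathbb C$ at no additional cost.
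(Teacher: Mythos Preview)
Your proof is correct and follows essentially the same route as the paper: both invoke \cite[Corollary~A.41]{BSbook} to bound $L^3$ by $\frac{1}{2mN}\|\mat H_N-\hat{\mat H}_N\|_2^2$, reduce this to $\frac{1}{mN^2}\|\mat Y_N-\hat{\mat Y}_N\|_2^2$, and then apply \eqref{eq:limdiff} from Lemma~\ref{lemma:lln}. Your parenthetical observation is also correct: the stated rate $o(N^{\delta\tau/3})$ is a typo for $o(N^{-\delta\tau/3})$, which is what the paper's own proof actually establishes.
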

\begin{proof}
By \cite[Corollary A.41]{BSbook} and Lemma \ref{lemma:lln}, we have a.s.
\begin{align*}
	\limsup_{N \to \infty} \sup_{z \in \mathbb{C}} N^{\delta \tau} L^3(F^{(N)}_z, \hat{F}^{(N)}_z) & \leq \limsup_{N \to \infty} \frac{N^{\delta \tau}}{2mN} \| \mat{H}_N - \hat{\mat{H}}_N \|_2^2 \\
		&\leq \limsup_{N \to \infty} \frac{N^{\delta \tau}}{mN^2} \| \mat{Y}_N - \hat{\mat{Y}}_N \|_2^2 \\
		&= 0.
\end{align*}
Thus, we conclude that a.s. $\sup_{z \in \mathbb{C}} L(F^{(N)}_z, \hat{F}^{(N)}_z) = o(N^{\delta \tau/3})$.
\end{proof}

\subsection{Proof of Theorem \ref{thm:circular}}

This section is devoted to Theorem \ref{thm:circular}. 
We begin with a lemma, alluded to in the introduction, that allows us to connect the distribution of a non-Hermitian matrix to that of a family of Hermitian matrices.
Lemma \ref{lemma:girko} follows from \cite[Lemma 11.2]{BSbook} and is based on Girko's original observation \cite{G1,G2}.  The lemma has appeared in a number of different forms; for example, see \cite[Lemma 4.3]{BC} and \cite{GK}.  

\begin{lemma}[Lemma 11.2 from \cite{BSbook}] \label{lemma:girko}
Let $\mat{M}$ be a $N \times N$ matrix.  For any $uv \neq 0$, we have
\begin{align*}
	&\iint e^{\sqrt{-1} ux + \sqrt{-1} vy} F^{\mat{M}}(dx, dy) \\
	&\qquad \qquad= \frac{u^2 + v^2}{4 \sqrt{-1} u \pi} \iint \frac{ \partial }{\partial s} \left[ \int_{0}^\infty \ln |x|^2 \nu_{\mat{M} - z\mat{I}}(dx) \right] e^{\sqrt{-1} us + \sqrt{-1} vt} dt ds, 
\end{align*}
where $z = s + \sqrt{-1} t$.  
\end{lemma}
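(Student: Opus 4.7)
\medskip
\noindent\textbf{Proof proposal for Lemma \ref{lemma:girko}.}

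The plan is to use the distributional identity $\Delta \log|z| = 2\pi\delta_{0}$ (with $\Delta = \partial_{s}^{2} + \partial_{t}^{2}$ and $z = s + \sqrt{-1}\,t$) to express $\mu_{\mat M}$ as a Laplacian of a logarithmic potential, rewrite that potential in terms of $\nu_{\mat M - z\mat I}$, and finally take a Fourier transform. Set
\[
  U(s,t) \;:=\; \int_{0}^{\infty} \log|x|^{2}\,\nu_{\mat M - z\mat I}(dx).
\]
Since $|\det(\mat M - z\mat I)|^{2} = \det\bigl((\mat M - z\mat I)(\mat M - z\mat I)^{\ast}\bigr) = \prod_{i}\sigma_{i}(\mat M - z\mat I)^{2}$, the integral $U(s,t)$ is (up to a normalization constant that can be tracked explicitly from the definition of $\nu$) equal to $\frac{c}{N}\sum_{i}\log|\lambda_{i}(\mat M)-z|$ for some absolute $c>0$. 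Hence, applying $\Delta$ term by term and using $\Delta\log|\lambda_{i}-z| = 2\pi\delta_{\lambda_{i}}$, one obtains in the sense of distributions an identity of the form $\Delta U(s,t) = 2\pi\, c \cdot \mu_{\mat M}$.

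Next, I would take the Fourier transform of this distributional identity. Pairing $\mu_{\mat M}$ against $e^{\sqrt{-1}(us+vt)}$ and integrating by parts twice (which transfers the Laplacian onto the exponential and produces the factor $-(u^{2}+v^{2})$) gives
\[
  \iint e^{\sqrt{-1}(ux+vy)}\,F^{\mat M}(dx,dy) \;=\; -\,\frac{u^{2}+v^{2}}{2\pi c}\iint U(s,t)\, e^{\sqrt{-1}(us+vt)}\,dt\,ds.
\]
To convert this into the form appearing in the lemma, one peels off a single $\partial/\partial s$: one more integration by parts yields $\iint \partial_{s}U(s,t)\,e^{\sqrt{-1}(us+vt)}\,dt\,ds = -\sqrt{-1}\,u \iint U(s,t)\,e^{\sqrt{-1}(us+vt)}\,dt\,ds$, so one can rewrite the factor $-(u^{2}+v^{2})/(2\pi c)$ as $(u^{2}+v^{2})/(4\sqrt{-1}\,u\pi)$ times $(-\sqrt{-1}\,u)$ absorbed into $\partial_{s}U$ (the condition $u\neq 0$ is needed precisely here to make the division $1/u$ meaningful). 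The resulting identity is the claim.

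The main technical obstacle is justifying the integrations by parts. The potential $U(s,t)$ has logarithmic singularities at the eigenvalues $\lambda_{i}(\mat M)$ and grows like $\log|z|$ at infinity, so it is neither Schwartz nor in $L^{1}(\mathbb{R}^{2})$, and $\mu_{\mat M}$ is a finitely supported atomic measure rather than a smooth density. The standard remedy is a two-sided regularization: replace $\log|\lambda_{i}-z|$ by $\tfrac{1}{2}\log(|\lambda_{i}-z|^{2}+\eps)$, which is smooth and grows only logarithmically, and at the same time localize $s,t$ to a large compact window, applying integration by parts on these smooth compactly supported approximations. The weights $u^{2}+v^{2}$ and $1/u$ on the right-hand side, together with the hypothesis $uv\neq 0$, ensure that the low-frequency modes of $U$ (which carry the divergences at infinity) drop out when one sends $\eps\to 0^{+}$ and the cutoff to infinity, while the locally integrable nature of $\log|\cdot|$ controls the singular contributions at the eigenvalues.
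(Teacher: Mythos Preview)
The paper does not give a proof of this lemma; it is quoted verbatim from \cite[Lemma~11.2]{BSbook} and used as a black box, so there is no in-paper argument to compare against. Your outline is precisely Girko's original derivation: write the logarithmic potential $U(s,t)=\frac{1}{N}\sum_i\log|\lambda_i(\mat M)-z|$ via the singular values, use $\Delta\log|\cdot|=2\pi\delta_0$ to recover $\mu_{\mat M}$ from $\Delta U$, and Fourier transform. That is the standard proof, and it is what one finds in Bai--Silverstein.

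Two points of caution. First, your constant $c$ should be tracked explicitly rather than left implicit: with the paper's symmetrized $\nu$, one has $\int_0^\infty\log|x|^2\,\nu_{\mat M-z\mat I}(dx)=\frac{1}{N}\sum_i\log\sigma_i(\mat M-z\mat I)=\frac{1}{N}\log|\det(\mat M-z\mat I)|$, and carrying this through gives $\frac{u^2+v^2}{2\sqrt{-1}\,u\pi}$ rather than the $\frac{u^2+v^2}{4\sqrt{-1}\,u\pi}$ printed in the statement. The discrepancy is because Bai--Silverstein's $\nu_n$ in their Chapter~11 is the ESD of $(\mat M-z\mat I)^\ast(\mat M-z\mat I)$ (atoms at $\sigma_i^2$), not the symmetrized singular-value measure used here; the paper has transcribed the constant without adjusting for the change of convention. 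This is harmless for the application, but you should be aware of it.

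Second, your regularization sketch is on the right track but underspecified. The smoothing $\tfrac12\log(|\lambda_i-z|^2+\eps)$ handles the local singularities, but the claim that ``the low-frequency modes drop out'' because of the factor $(u^2+v^2)/u$ is not what actually controls the boundary terms at infinity: one still has to check that the boundary contributions from the large cutoff vanish, and this uses that $\partial_s U$ decays like $1/|z|$ (it equals $-\Re\, s_N(z)$), not any cancellation from the Fourier weight. The cleanest route is to first establish the identity for smooth compactly supported test functions in place of $e^{\sqrt{-1}(us+vt)}$ (where integration by parts is immediate), and then pass to the exponential by a density or cutoff argument, using the $L^1_{\mathrm{loc}}$ integrability of $U$ and the decay of $\partial_s U$.
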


For a square matrix $\mat{M}$, we define the function
\begin{equation} \label{eq:def:gMn}
	g_{\mat{M}}(s,t) := \frac{\partial}{\partial s} \int_{0}^\infty \log |x|^2 \nu_{\mat{M}-z\mat{I}}(dx),
\end{equation}
where $z = s + \sqrt{-1} t$.  We also define
\begin{equation} \label{eq:def:g}
	g(s,t) := \left\{
		\begin{array}{ll}
		\frac{2s}{s^2 + t^2}, & \text{if } s^2 + t^2 >1\\
		2s, & \text{otherwise}
	\end{array}
   	\right. .
\end{equation}

We will make use of the following result from \cite{BSbook}.
\begin{lemma}[Lemma 11.5 from \cite{BSbook}] \label{lemma:BS:115}
For all $uv \neq 0$, we have
$$ \frac{1}{\pi} \iint_{x^2 + y^2 \leq 1} e^{\sqrt{-1} ux + \sqrt{-1} vy} dx dy = \frac{u^2 + v^2}{4 \sqrt{-1} u \pi} \int \left[ \int g(s,t) e^{\sqrt{-1}us + \sqrt{-1} vt } dt \right] ds. $$
\end{lemma}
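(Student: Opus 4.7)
The key observation would be that $g(s,t)$ is essentially the $s$-derivative of the logarithmic potential of the uniform measure on the unit disk. Setting $U(s,t) := \frac{1}{\pi}\iint_{|w|\leq 1}\log|w-z|\,d^2w$ with $z = s+\sqrt{-1}\,t$, a direct computation (using the mean-value property of the harmonic function $\log|w-z|$ outside the disk, and solving Poisson's equation inside) gives $U(s,t) = \frac{1}{2}\log(s^2+t^2)$ when $s^2+t^2\geq 1$ and $U(s,t) = (s^2+t^2-1)/2$ when $s^2+t^2 < 1$. Differentiating in $s$ yields $g(s,t) = 2\,\partial_s U(s,t)$, and Poisson's equation (with density $1/\pi$) gives the distributional identity $\frac{1}{2}\Delta U = \mathbf{1}_{|z|\leq 1}$.

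With these identifications in hand, the plan is to recognize both sides of the claimed identity as two ways of writing the same Fourier integral. On the LHS, substituting $\mathbf{1}_{|z|\leq 1} = \frac{1}{2}\Delta U$ and formally integrating by parts twice gives
$$\frac{1}{\pi}\iint_{|z|\leq 1}e^{\sqrt{-1}us+\sqrt{-1}vt}\,ds\,dt = -\frac{u^2+v^2}{2\pi}\,\widehat{U}(u,v),$$
while on the RHS, inserting $g=2\partial_s U$ and integrating by parts once in $s$ yields
$$\frac{u^2+v^2}{4\sqrt{-1}\,u\pi}\iint g(s,t)\,e^{\sqrt{-1}us+\sqrt{-1}vt}\,ds\,dt = \frac{u^2+v^2}{4\sqrt{-1}\,u\pi}\cdot(-2\sqrt{-1}\,u)\,\widehat{U}(u,v) = -\frac{u^2+v^2}{2\pi}\,\widehat{U}(u,v),$$
so the two sides agree.

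The main obstacle is that $U(s,t)\sim\frac{1}{2}\log(s^2+t^2)$ as $|z|\to\infty$, so $U$ itself has no classical Fourier transform and the integrations by parts above are only formal as written. I would make everything rigorous by introducing a smooth cutoff $\chi_R(z) := \chi(z/R)$ with $\chi\equiv 1$ on $|z|\leq 1$ and compactly supported, carrying out the integrations by parts against $\chi_R U$, and then sending $R\to\infty$. The payoff is that after a single derivative $\partial_s U$ and $\partial_t U$ decay like $|z|^{-1}$; combined with the oscillation of $e^{\sqrt{-1}us+\sqrt{-1}vt}$ when $uv\neq 0$ (so that one can integrate by parts freely in either variable), this makes the relevant integrals conditionally convergent and lets the cutoff-dependent boundary terms vanish in the limit. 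This is precisely where the hypothesis $uv\neq 0$ enters. An alternative, more computational route would be to evaluate both sides directly in polar coordinates with the help of Bessel-function identities (the LHS works out to $2J_1(\sqrt{u^2+v^2})/\sqrt{u^2+v^2}$), but the potential-theoretic approach is cleaner and makes transparent why the identity holds.
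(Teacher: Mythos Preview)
The paper does not supply its own proof of this statement: it is quoted verbatim as Lemma~11.5 from Bai--Silverstein's monograph and used as a black box, so there is nothing in the paper to compare your argument against.

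That said, your potential-theoretic approach is correct and essentially the standard derivation. Your identifications $g=2\partial_s U$ and $\tfrac12\Delta U=\mathbf{1}_{|z|\le 1}$ are accurate (with the logarithmic potential normalized as you wrote), and the formal Fourier manipulation reducing both sides to $-\tfrac{u^2+v^2}{2\pi}\widehat U(u,v)$ is right. For the rigor, note that the statement is written as an \emph{iterated} integral $\int[\int g(s,t)e^{\sqrt{-1}us+\sqrt{-1}vt}\,dt]\,ds$: for fixed $s$ the inner $t$-integral converges absolutely (since $g(s,\cdot)$ is $O(t^{-2})$), and one can compute it explicitly as $2\pi\,\mathrm{sgn}(s)e^{-|vs|}$ for $|s|\ge 1$, which then decays exponentially in $s$ provided $v\neq 0$. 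So the iterated integral is already perfectly well defined without a cutoff, and you can carry out the integration by parts directly on the iterated integral rather than on a (nonexistent) Fourier transform of $U$; this avoids the $\widehat U$ detour entirely. The hypothesis $u\neq 0$ is only needed to make sense of the prefactor $1/u$. Your Bessel-function alternative is also fine and is in fact close to how the computation is presented in Bai--Silverstein.
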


We will also need the following lemma, which summarizes some of the results from \cite[Chapter 11]{BSbook}, \cite{BC}, and \cite[Section 3]{GTcirc} (in particular, see \cite[Remark 3.1]{GTcirc}).
\begin{lemma} \label{lemma:nuz}
For each $z \in \mathbb{C}$, there exists a probability measure $\nu_z$ on the real line such that the following properties hold.
\begin{enumerate}[(i)]
\item $g(s,t) = \frac{\partial}{\partial s} \int_{0}^\infty \log|x|^2 d\nu_z(dx)$, where $z := s + \sqrt{-1}t$.
\item For each $z \in \mathbb{C}$, $\nu_z$ has density $\rho_z$ with 
$$ \sup_{z \in \mathbb{C}} \sup_{x \in \mathbb{R}} |\rho_z(x)| \leq 1. $$
\item For any $M > 0$, there exists $\beta > 0$ such that $\nu_z$ is supported inside $[-\beta, \beta]$ for all $z \in \mathbb{C}$ with $|z| \leq M$.  
\item $a(\mat{q})$ (defined in \eqref{defa}) is the Stieltjes transform of $\nu_z$.  That is,
$$ a(\mat{q}) = \int \frac{1}{x - \eta} d \nu_z(dx) = \int \frac{ \rho_z(x)}{x - \eta} dx. $$
Recall that the matrix $\mat{q}$ is a function of both $\eta$ and $z$.
\end{enumerate}
\end{lemma}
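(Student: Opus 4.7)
The plan is to take $a(\mat q)$ from \eqref{defa} as the starting point and to define $\nu_z$ as the probability measure on $\mathbb{R}$ whose Stieltjes transform equals $a(\mat q)$. Properties (iv) and (iii) will follow from analyzing the cubic associated with \eqref{defa}, (ii) from Stieltjes inversion, and (i) by direct computation tracing the $|z|\leq 1$ and $|z|>1$ cases separately.

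Substituting $w := a(\mat q) + \eta$ into \eqref{defa} yields the cubic
\[
w^3 - \eta w^2 + (1 - |z|^2) w + \eta |z|^2 = 0.
\]
Among its three algebraic branches, exactly one satisfies $a(\mat q) = -\eta^{-1} + O(\eta^{-3})$ as $|\eta| \to \infty$ with $\Im \eta > 0$; this branch is holomorphic on $\mathbb{C}^+$ with positive imaginary part there, as verified in \cite[Section 3]{GTcirc}. Nevanlinna's representation theorem then produces a unique probability measure $\nu_z$ for which $a(\mat q)$ is the Stieltjes transform, establishing (iv). For (iii), the coefficients of the cubic stay bounded as long as $|z| \leq M$, so its roots lie in a disk of radius $\beta = \beta(M)$; for real $\eta$ with $|\eta| > \beta$ the relevant branch is real analytic, and Stieltjes inversion forces $\nu_z$ to vanish off $[-\beta,\beta]$.

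For (ii), apply the Stieltjes inversion formula $\rho_z(x) = \tfrac{1}{\pi}\lim_{\varepsilon \to 0^+} \Im a(x + \sqrt{-1}\varepsilon, z)$; computing $\Im w$ directly from the cubic yields the uniform bound $\rho_z(x) \leq 1$, as detailed in \cite[Chapter 11]{BSbook}. Note also that the involution $(\eta, a) \mapsto (-\bar\eta, -\overline{a})$ preserves \eqref{defa}, so by uniqueness of the Stieltjes-transform branch we have $a(-\bar\eta,z) = -\overline{a(\eta,z)}$, which forces $\nu_z$ to be symmetric about the origin.

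The main task is (i). Using the symmetry of $\nu_z$ and representing the logarithm as a regularized Cauchy integral along the imaginary axis, one expresses $\int_0^\infty \log x^2 \, \nu_z(dx)$ in terms of the values of $a(\sqrt{-1} y, z)$ for $y \in (0,\infty)$, thereby reducing the identity to an analysis of $\partial_s a$ near $\eta = 0$. Setting $\eta = 0$ in the cubic gives $w(w^2 + 1 - |z|^2) = 0$, whose relevant root is $w = 0$ when $|z| > 1$ (matching the fact that $\nu_z$ then has a spectral gap at the origin) and $w = \sqrt{-1}\sqrt{1 - |z|^2}$ when $|z| \leq 1$; differentiating in $s$ through the cubic and substituting these boundary values at $\eta = 0^+$ produces exactly the piecewise formula \eqref{eq:def:g}. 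The main obstacle is justifying the interchange of $\partial_s$ with the $y$-integral at the transition $|z| = 1$, where the density of $\nu_z$ degenerates at the origin; this is handled by uniform regularity estimates on $\rho_z$ carried out in \cite[Section 3]{GTcirc} and \cite[Chapter 11]{BSbook}, whose arguments I would follow.
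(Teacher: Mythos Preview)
The paper does not actually prove this lemma: it is stated as a summary of results drawn from \cite[Chapter~11]{BSbook}, \cite{BC}, and \cite[Section~3]{GTcirc} (see in particular \cite[Remark~3.1]{GTcirc}), with no argument supplied. Your proposal, by contrast, sketches how one would extract each item from those references---via the cubic in $w=a+\eta$, Nevanlinna's representation, Stieltjes inversion, and the explicit potential computation---and you end up citing the very same sources. So your approach is not different from the paper's; it is simply more explicit, filling in what the paper delegates to the literature.

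One remark on your treatment of (i): the sentence ``differentiating in $s$ through the cubic and substituting these boundary values at $\eta=0^+$ produces exactly the piecewise formula \eqref{eq:def:g}'' skips a step. The quantity $g(s,t)$ is $\partial_s$ of the logarithmic integral $\int_0^\infty \log x^2\,\nu_z(dx)$, not $\partial_s a$ evaluated at $\eta=0$. What your imaginary-axis representation actually gives is $g(s,t)=-2\int_0^\infty \partial_s\,\Im a(\sqrt{-1}y,z)\,dy$, and one still has to evaluate that integral. The cleanest route in the references you cite is to compute the potential $U(z)=\int \log|x|\,\nu_z(dx)$ explicitly---it equals $\tfrac{1}{2}(|z|^2-1)$ for $|z|\leq 1$ and $\log|z|$ for $|z|>1$---and then differentiate in $s$ directly; this yields \eqref{eq:def:g} immediately and avoids the interchange-of-limits issue you flag at $|z|=1$.
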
 

Let $\mat{Y}_N$ and $\mat{A}_N$ satisfy the assumptions of Theorem \ref{thm:main}.  By Lemma \ref{lemma:girko} and Lemma \ref{lemma:BS:115}, in order to prove Theorem \ref{thm:circular}, it suffices to show that a.s.
$$ \iint \left[g_{\frac{1}{\sqrt{N}}(\mat{Y}_N + \mat{A}_N)}(s,t) - g(s,t)\right] e^{\sqrt{-1} us + \sqrt{-1} vt} dt ds \longrightarrow 0 $$
as $N \rightarrow \infty$.  

We now claim that a.s.
\begin{equation} \label{eq:hsbnd}
	\frac{1}{N} \left \| \frac{1}{\sqrt{N}} (\mat{Y}_N + \mat{A}_N) \right\|_2^2 = O(1). 
\end{equation}
Indeed, by the triangle inequality
$$ \frac{1}{N} \left \| \frac{1}{\sqrt{N}} (\mat{Y}_N + \mat{A}_N) \right\|_2^2 \ll \frac{1}{N^2} \|\mat{Y}_N\|_2^2 + \frac{1}{N^2} \|\mat{A}_N\|_2^2. $$
By \eqref{eq:Aassump} and Lemma \ref{lemma:lln}, we have a.s.  
$$ \frac{1}{N^2} \|\mat{A}_N\|_2^2 = O(1) \quad \text{and} \quad \frac{1}{N^2} \|\mat{Y}_N \|_2^2 = O(1), $$
and \eqref{eq:hsbnd} follows.  

Let $B > 0$.  Define
$$ T:= \left\{(s,t) : |s| \leq B, |t| \leq B^3 \right\}. $$
By \cite[Lemma 11.7]{BSbook} and \eqref{eq:hsbnd}, in order to prove Theorem \ref{thm:main} it suffices to show that for each fixed $B>0$ a.s.
$$ \iint_T \left[g_{\frac{1}{\sqrt{N}}(\mat{Y}_N + \mat{A}_N)}(s,t) - g(s,t)\right] e^{\sqrt{-1} us + \sqrt{-1} vt} dt ds \longrightarrow 0 $$
as $N \rightarrow \infty$.  

Let $A > 0$ be the constant from Theorem \ref{thm:least-sing-value}, and set $\eps_N := N^{-A}$.  Recall that $z := s + \sqrt{-1} t$.  Following the integration by parts argument from \cite[Section 11.7]{BSbook}, it suffices to show that a.s.
\begin{equation} \label{eq:showintdiff}
	 \limsup_{N \rightarrow \infty} \iint_{T} \left| \int_{\eps_N}^{\infty} \log |x|^2 \left(\nu_{\frac{1}{\sqrt{N}} (\mat{Y}_N + \mat{A}_N) - z\mat{I}}(dx) - \nu_z(dx) \right) \right| dt ds = 0 
\end{equation}
and
\begin{equation} \label{eq:showintsingle}
	\limsup_{N \rightarrow \infty} \iint_{T} \left| \int_{0}^{\eps_N} \log |x|^2 \nu_{\frac{1}{\sqrt{N}} (\mat{Y}_N + \mat{A}_N) - z\mat{I}}(dx) \right| dt ds = 0,
\end{equation}
and similarly with the two-dimensional integral on $T$ replaced by one-dimensional integrals on the boundary of $T$.  We shall only estimate the two-dimensional integrals, as the treatment of the one-dimensional integrals are similar.  

We prove \eqref{eq:showintdiff} first.  By \eqref{eq:hsbnd}, it follows that $\nu_{\frac{1}{\sqrt{N}} (\mat{Y}_N + \mat{A}_N) - z \mat{I}}$ is supported on $[-N^{50}, N^{50}]$ a.s.  Thus, it suffices to show that a.s.
$$ \limsup_{N \rightarrow \infty} \iint_{T} \left| \int_{\eps_N}^{N^{50}} \log |x|^2 \left(\nu_{\frac{1}{\sqrt{N}} (\mat{Y}_N + \mat{A}_N) - z\mat{I}}(dx) - \nu_z(dx) \right) \right| dt ds = 0. $$
By definition of $\eps_N$, it suffices to show that a.s.
\begin{equation} \label{eq:nalphanorm}
	\limsup_{N \rightarrow \infty} (\log N) \sup_{|z| \leq M} \sup_{x \in \mathbb{R}} \left| \nu_{\frac{1}{\sqrt{N}} (\mat{Y}_N + \mat{A}_N ) - z\mat{I}}( (-\infty, x)) - \nu_z( (-\infty, x)) \right| = 0,
\end{equation}
where $M := B \sqrt{1+B^4}$.  \eqref{eq:nalphanorm} will follow from Lemma \ref{lemma:rate} below.

We now prove \eqref{eq:showintsingle}.  By Theorem \ref{thm:least-sing-value}, we have
\begin{equation} \label{eq:svzero}
	\text{for a.e. } z \in T, \text{ a.s. }\quad \lim_{N \rightarrow \infty} \int_{0}^{\eps_N} \log |x|^2 \nu_{\frac{1}{\sqrt{N}} (\mat{Y}_N + \mat{A}_N) - z\mat{I}}(dx) = 0.
\end{equation}

We note that it is possible to switch the quantifiers ``a.e.'' on $z$ and ``a.s.'' on $\omega$ in \eqref{eq:svzero} using Fubini's theorem and the arguments from \cite[Section 4]{BC}, where $\omega$ denotes an element of the sample space.  Thus, we have
\begin{equation} \label{eq:svzeroreverse}
	\text{a.s., } \text{for a.e. } z \in T, \quad \lim_{N \rightarrow \infty} \int_{0}^{\eps_N} \log |x|^2 \nu_{\frac{1}{\sqrt{N}} (\mat{Y}_N + \mat{A}_N) - z\mat{I}}(dx) = 0.
\end{equation}

Using the $L^2$-norm argument in \cite[Section 12]{TVcirc} (more specifically, see the argument following \cite[equation (49)]{TVcirc}), it follows that a.s.
\begin{equation} \label{eq:unifbnd}
	\left( \iint_{T} \left| \int_{0}^{\eps_N} \log |x|^2 \nu_{\frac{1}{\sqrt{N}} (\mat{Y}_N + \mat{A}_N) - z \mat{I}}(dx) \right|^2 dt ds \right)^{1/2} 
\end{equation}
is bounded uniformly in $N$, and hence the sequence of functions 
$$ \int_{0}^{\eps_N} \log |x|^2 \nu_{\frac{1}{\sqrt{N}} (\mat{Y}_N + \mat{A}_N) - z\mat{I}}(dx) $$ 
is a.s. uniformly integrable on $T$.  Let $L \gg 1$ be a large parameter and define $T_{L,N}$ to be the set of all $z \in T$ such that $\left| \int_{0} ^{\eps_N} \log |x|^2 \nu_{\frac{1}{\sqrt{N}} (\mat{Y}_N + \mat{A}_N) - z \mat{I}}(dx)\right| \leq L$.  By \eqref{eq:svzeroreverse} and the dominated convergence theorem, we have a.s.
$$ \lim_{N \rightarrow \infty} \iint_{T_{L,N}} \left| \int_{0}^{\eps_N} \log |x|^2 \nu_{\frac{1}{\sqrt{N}}(\mat{Y}_N + \mat{A}_N) - z\mat{I}}(dx) \right| dt ds = 0. $$
On the other hand, from the uniform boundedness of \eqref{eq:unifbnd}, we obtain a.s.
$$ \limsup_{N \rightarrow \infty} \iint_{T \setminus T_{L,N}} \left| \int_{0}^{\eps_N} \log |x|^2 \nu_{\frac{1}{\sqrt{N}}(\mat{Y}_N + \mat{A}_N) - z\mat{I}}(dx) \right| dt ds \ll \frac{1}{L}.  $$
Combining the bounds above and taking $L \rightarrow \infty$ yields \eqref{eq:showintsingle}.  

It remains to establish the following lemma.  

\begin{lemma} \label{lemma:rate}
Under the assumptions of Theorem \ref{thm:main}, there exists $\alpha > 0$ such that a.s.
$$ \sup_{|z| \leq M} \left\| \nu_{\frac{1}{\sqrt{N}}(\mat{Y}_N + \mat{A}_N) - z\mat{I}} - \nu_z \right\| = O_{M,M_{2+\tau}}(N^{-\alpha}), $$
where $\| \nu - \mu \| := \sup_{x \in \mathbb{R}} | \nu((-\infty, x)) - \mu((-\infty, x))|$ for any two probability measures $\nu, \mu$ on the real line and
$$ M_{2+\tau} := \sum_{k=1}^m \left( \E|\xi_{k,1}|^{2+\tau} + \E|\xi_{k,2}|^{2+\tau} \right). $$ 
\end{lemma}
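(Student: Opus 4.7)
The strategy is to transfer control of the matrix-valued Stieltjes transform $\mat{\Gamma}_N(\mat{q})$ obtained in Sections \ref{concentrationsection} and \ref{sec:resolvent} into uniform control of the Kolmogorov distance between $\nu_{\frac{1}{\sqrt{N}}(\mat{Y}_N + \mat{A}_N) - z\mat{I}}$ and $\nu_z$. The key observation is that the scalar
\[
a_N(\mat{q}) \;=\; \tfrac{1}{2m}\tr\mat{\Gamma}_N(\mat{q}) \;=\; \tfrac{1}{2mN}\tr\mat{R}_N(\mat{q})
\]
is precisely the Stieltjes transform of $\nu_{\frac{1}{\sqrt{N}}\mat{Y}_N - z\mat{I}_{mN}}$, while $a(\mat{q})$ is the Stieltjes transform of $\nu_z$ by Lemma \ref{lemma:nuz}(iv). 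Since $\nu_z$ has bounded density and compact support (Lemma \ref{lemma:nuz}(ii)--(iii)), a Bai-type smoothing inequality (for instance, \cite[Theorem B.14]{BSbook} or the version in \cite[Lemma 2.2]{GTcirc}) converts uniform closeness of Stieltjes transforms on a suitable horizontal line into a polynomial-rate Kolmogorov bound.

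I would first remove the perturbation $\mat{A}_N$ and truncate. The block structure of $\mat{A}_N$ yields $\rank(\mat{A}_N) \leq \sum_k \rank(\mat{A}_{N,k}) = O(N^{1-\eps})$, so the Hermitization of $\frac{1}{\sqrt{N}}\mat{A}_N$ has rank $O(N^{1-\eps})$. The standard rank-interlacing inequality for Hermitian matrices (e.g.\ \cite[Theorem A.43]{BSbook}) then gives
\[
\sup_{z\in\mathbb{C}} \bigl\| \nu_{\frac{1}{\sqrt{N}}(\mat{Y}_N+\mat{A}_N)-z\mat{I}} - \nu_{\frac{1}{\sqrt{N}}\mat{Y}_N -z\mat{I}}\bigr\| = O(N^{-\eps}).
\]
Next, applying Lemma \ref{lemma:truncn} together with the fact that Kolmogorov distance is bounded by Levy distance times the density bound of the limit (Lemma \ref{lemma:nuz}(ii)), I can replace $\mat{Y}_N$ by its truncated version $\hat{\mat{Y}}_N$ at a loss of $o(N^{-\delta\tau/3})$ uniformly in $z$, provided $\delta\tau$ is chosen small enough that all requirements of Lemma \ref{approxthm} and Lemma \ref{lemma:concentrate} remain satisfied.

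With these reductions, the analysis reduces to bounding $\|\nu_{\frac{1}{\sqrt{N}}\hat{\mat{Y}}_N-z\mat{I}} - \nu_z\|$ uniformly for $|z|\leq M$. Combining Lemma \ref{lemma:concentrate} and Lemma \ref{approxthm} (applied to the truncated $\hat{\mat{Y}}_N$, for which $\zeta_k=0$ and entries are bounded by $4N^{\delta}$, with $\hat\rho_k^{(N)}\to \rho_k$ by Lemma \ref{lemma:truncation}(iii)), I obtain almost surely
\[
\sup_{|z|\leq M}\ \sup_{|\eta|\leq M,\ \Im\eta\geq N^{-1/8}} \bigl|a_N(\mat{q}) - a(\mat{q})\bigr| \;=\; O\!\bigl(N^{-1/8}|\Im\eta|^{-5}\bigr).
\]
Fixing a horizontal line $\Im\eta = N^{-\beta}$ for a small $\beta>0$ and invoking the Bai-type inequality, for each fixed $z$ I get a polynomial rate $\|\nu_{\frac{1}{\sqrt{N}}\hat{\mat Y}_N - z\mat I} - \nu_z\| = O(N^{-\alpha_1})$ for some $\alpha_1>0$. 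Uniformity over $|z|\leq M$ follows by picking an $N^{-C}$-net on the disk $\{|z|\leq M\}$ (polynomially many points) and noting that both $\nu_{\frac{1}{\sqrt{N}}\hat{\mat Y}_N - z\mat I}$ and $\nu_z$ are Lipschitz continuous in $z$ in Kolmogorov distance with a polynomial Lipschitz constant—the former via the resolvent/rank inequality, the latter via the bounded density.

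The main technical obstacle I expect is the bookkeeping of the polynomial rate: balancing the choice of $\beta$ (how close to the real axis we take $\eta$) against the blowup factor $|\Im\eta|^{-5}$ in Lemma \ref{approxthm} and the $|\Im\eta|^{-1}$ factor arising in the Bai-type inversion inequality, while also trading against the $N^{-\delta\tau/3}$ truncation loss and the $N^{-\eps}$ rank loss. Once these exponents are reconciled, taking $\alpha>0$ sufficiently small relative to $\min(\eps,\delta\tau,\alpha_1)$ yields the claim.
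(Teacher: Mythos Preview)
Your proposal is correct and follows essentially the same route as the paper: remove $\mat{A}_N$ via the rank inequality (Theorem~\ref{thm:BS:A43}), pass between Kolmogorov and Levy distances using the bounded density of $\nu_z$ (Lemma~\ref{lemma:BS:B18}), truncate via Lemma~\ref{lemma:truncn}, and then convert the Stieltjes-transform estimate from Lemmas~\ref{lemma:concentrate} and~\ref{approxthm} into a Kolmogorov bound by a Bai-type smoothing inequality (the paper offloads this last step to \cite[Lemma~7.14]{NgOCirc}). One small bookkeeping slip: the combined bound should read $O(N^{-1/8}) + O(N^{-1/2}|\Im\eta|^{-5})$ rather than $O(N^{-1/8}|\Im\eta|^{-5})$, and accordingly the paper takes $\Im\eta$ down only to $v_N = N^{-\delta\tau/1000}$ (not $N^{-1/8}$) so that the second term stays polynomially small.
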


In order to prove Lemma \ref{lemma:rate}, we will need the following results from \cite{BSbook}.  

\begin{theorem}[Theorem A.43 from \cite{BSbook}] \label{thm:BS:A43}
Let $\mat{A}$ and $\mat{B}$ be two $n \times n$ Hermitian matrices.  Then
$$ \| F^\mat{A} - F^\mat{B} \| \leq \frac{1}{n} \rank(\mat{A} - \mat{B}), $$
where $\|f\| = \sup_{x} |f(x)|$.  
\end{theorem}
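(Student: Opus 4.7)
The plan is to reduce the Kolmogorov distance bound to an eigenvalue interlacing statement of the following form: if $r = \rank(\mat{A}-\mat{B})$, then for every $i \in \{1,\ldots,n\}$,
\[
\lambda_{i-r}(\mat{A}) \leq \lambda_i(\mat{B}) \leq \lambda_{i+r}(\mat{A}),
\]
with the convention that eigenvalues are ordered non-decreasingly and $\lambda_j=-\infty$ for $j\le 0$, $\lambda_j=+\infty$ for $j>n$. Once this is in hand, fixing $x\in\mathbb{R}$ and letting $k:=nF^{\mat{A}}(x)$ (so that $\lambda_k(\mat{A})\le x<\lambda_{k+1}(\mat{A})$), the two interlacing bounds give $\lambda_{k-r}(\mat{B})\le x$ and $\lambda_{k+r+1}(\mat{B})>x$, whence $k-r\le nF^{\mat{B}}(x)\le k+r$. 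Dividing by $n$ and taking a supremum in $x$ yields the desired $\|F^{\mat{A}}-F^{\mat{B}}\|\le r/n$.

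To prove the interlacing, I would use the Courant--Fischer min-max characterization in the form
\[
\lambda_i(\mat{B}) \;=\; \min_{\substack{S\subseteq\mathbb{C}^n\\ \dim S = i}}\; \max_{v\in S,\ \|v\|=1}\; \langle \mat{B}v,v\rangle.
\]
Set $W:=\ker(\mat{A}-\mat{B})$, which has $\dim W\ge n-r$ because $\rank(\mat{A}-\mat{B})=r$. Let $E_{i+r}$ be the span of eigenvectors of $\mat{A}$ corresponding to the $i+r$ smallest eigenvalues, so $\dim E_{i+r}=i+r$. By the standard dimension-count $\dim(E_{i+r}\cap W)\ge (i+r)+(n-r)-n = i$, hence we may choose a subspace $S\subseteq E_{i+r}\cap W$ with $\dim S=i$. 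For any unit $v\in S$, the inclusion $v\in W$ forces $\mat{B}v=\mat{A}v$, while the inclusion $v\in E_{i+r}$ forces $\langle \mat{A}v,v\rangle\le\lambda_{i+r}(\mat{A})$. Plugging $S$ into the min-max formula yields $\lambda_i(\mat{B})\le\lambda_{i+r}(\mat{A})$, and the symmetric bound $\lambda_{i-r}(\mat{A})\le\lambda_i(\mat{B})$ follows by swapping the roles of $\mat{A}$ and $\mat{B}$ (since $\rank(\mat{B}-\mat{A})=r$ also).

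There is no substantive obstacle: the entire argument is a one-page linear-algebra exercise, and the only point that requires care is the bookkeeping for the edge cases $i\pm r\notin\{1,\ldots,n\}$, which is handled by the $\pm\infty$ convention above so that the counting step for $F^{\mat{A}}(x)$ vs.\ $F^{\mat{B}}(x)$ remains valid uniformly for all $x\in\mathbb{R}$.
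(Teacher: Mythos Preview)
Your argument is correct. The paper does not give its own proof of this statement; it is quoted as Theorem~A.43 from \cite{BSbook} and used as a black box. Your approach---prove the rank-$r$ interlacing inequalities $\lambda_{i-r}(\mat{A})\le\lambda_i(\mat{B})\le\lambda_{i+r}(\mat{A})$ via Courant--Fischer and an intersection with the kernel $W=\ker(\mat{A}-\mat{B})$, then convert interlacing into the counting bound $|nF^{\mat{A}}(x)-nF^{\mat{B}}(x)|\le r$---is exactly the standard proof found in Bai--Silverstein, and all steps are sound (the dimension count $\dim(E_{i+r}\cap W)\ge i$ and the edge-case convention are handled properly).
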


\begin{lemma}[Lemma B.18 from \cite{BSbook}] \label{lemma:BS:B18}
If $G$ satisfies $\sup_x |G(x+y) - G(x)| \leq D |y|^\alpha$ for all $y$, then
$$ L(F,G) \leq \|F - G\| \leq (D+1) L^{\alpha}(F,G) $$
for all $F$.  Here $L(F,G)$ denotes the Levy distance, defined in \eqref{eq:def:levy}, between the distribution functions $F$ and $G$. 
\end{lemma}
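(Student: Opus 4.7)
The statement decomposes into two independent inequalities, and my plan is to prove each directly from the definitions of Levy and Kolmogorov distance; the Holder hypothesis on $G$ enters only in the right-hand bound.

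For the left inequality $L(F,G) \le \|F-G\|$, I would set $\delta = \|F-G\|$, so that $|F(x)-G(x)| \le \delta$ for every $x$. Monotonicity of $F$ then gives $F(x-\delta) \le F(x) \le G(x) + \delta$ and $F(x+\delta) \ge F(x) \ge G(x) - \delta$, which rearrange exactly to the Levy condition $F(x-\delta) - \delta \le G(x) \le F(x+\delta) + \delta$ for the candidate $\eps = \delta$. Hence $\delta$ is admissible in the infimum defining $L(F,G)$, so $L(F,G) \le \delta$. No assumption on $G$ beyond being a distribution function is needed for this half.

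For the right inequality $\|F-G\| \le (D+1) L^\alpha(F,G)$, let $\eps = L(F,G)$ and fix an arbitrary $\eps' > \eps$, so that $F(x-\eps') - \eps' \le G(x) \le F(x+\eps') + \eps'$ for every $x$. The key idea is to shift the argument by $\pm\eps'$ and then use the Holder estimate to convert a shift in $F$ into a controlled perturbation of $G$. Concretely, substituting $x \mapsto x+\eps'$ in the lower Levy bound and applying $G(x+\eps') \le G(x) + D(\eps')^\alpha$ yields $F(x) - G(x) \le \eps' + D(\eps')^\alpha$, and the symmetric substitution $x \mapsto x-\eps'$ in the upper bound together with $G(x-\eps') \ge G(x) - D(\eps')^\alpha$ gives $G(x) - F(x) \le \eps' + D(\eps')^\alpha$. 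Letting $\eps' \downarrow \eps$ and using $\eps \le \eps^\alpha$ (for $\eps \le 1$ and $\alpha \in (0,1]$) delivers $\|F-G\| \le (D+1)\eps^\alpha$. The regime $\eps \ge 1$ is automatic, since distribution functions take values in $[0,1]$ so $\|F-G\| \le 1 \le (D+1)\eps^\alpha$.

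The only real subtlety is that the infimum in the definition of $L$ need not be attained, so I would keep $\eps' > \eps$ strict throughout the computation and only pass to the limit $\eps'\downarrow \eps$ at the end, using right-continuity of $F$ to preserve the inequality in the limit. Aside from this bookkeeping, the argument is elementary, and the substantive content is the single invocation of the Holder regularity of $G$ in the second half.
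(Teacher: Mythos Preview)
Your proof is correct. The paper does not give its own proof of this lemma; it simply cites it as Lemma~B.18 from \cite{BSbook} and uses it as a black box (with $\alpha=1$, since $\nu_z$ has bounded density). Your argument is the standard elementary derivation: for the left inequality you only need monotonicity of $F$, and for the right inequality you shift the argument by $\pm\eps'$ in the Levy sandwich and invoke the H\"older bound on $G$ to absorb the shift, then let $\eps'\downarrow\eps$ and use $\eps\le\eps^\alpha$ for $\eps\le 1$, $\alpha\in(0,1]$. One minor remark: you need not worry about the case $\eps\ge 1$ at all, since the Levy distance between two distribution functions is always at most~$1$ (take $\eps=1$ in the definition). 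Also, your implicit restriction to $\alpha\in(0,1]$ is harmless---the case $\alpha>1$ forces $G$ to be constant, and in any event the paper only applies the lemma with $\alpha=1$.
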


\begin{proof}[Proof of Lemma \ref{lemma:rate}]
The proof of the lemma is based on the arguments from \cite[Lemma 64]{TVuniv}; similar arguments were also used in the proof of \cite[Lemma 7.14]{NgOCirc}. 

By Theorem \ref{thm:BS:A43} and \eqref{eq:Aassump},
$$ \sup_{z \in \mathbb{C}} \left\| \nu_{\frac{1}{\sqrt{N}} (\mat{Y}_N + \mat{A}_N) - z\mat{I}} - \nu_{\frac{1}{\sqrt{N}} \mat{Y}_N - z\mat{I}} \right\| = O(N^{-\eps}). $$
Thus, by the triangle inequality, it suffices to show that a.s.
$$ \sup_{|z| \leq M} \left\| \nu_{\frac{1}{\sqrt{N}} \mat{Y}_N - z\mat{I}} - \nu_z \right\| = O_{M,M_{2+\tau}}(N^{-\alpha}) $$
for some $\alpha > 0$.  

In view of Lemma \ref{lemma:nuz}, it follows that for each $z \in \mathbb{C}$, $\nu_z$ has density $\rho_z$ with 
$$ \sup_{z \in \mathbb{C}} \sup_{x \in \mathbb{R}} |\rho_z(x)| \leq 1. $$
Thus, by Lemma \ref{lemma:BS:B18}, it suffices to show that a.s.
$$ \sup_{|z| \leq M} L\left(F^{(N)}_z, F_z \right) = O_{M,M_{2+\tau}}(N^{-\alpha}), $$
where $F_z$ is the cumulative distribution function of $\nu_z$.  
We remind the reader that $L(F,G)$ denotes the Levy distance, defined in \eqref{eq:def:levy}, between the distribution functions $F$ and $G$.  

By Lemma \ref{lemma:truncn}, it suffices to show that a.s.
\begin{equation} \label{eq:showsupzmnorm}
	\sup_{|z| \leq M} \left\| \nu_{\frac{1}{\sqrt{N}} \hat{\mat{Y}}_N - z \mat{I}} - \nu_z \right\| = O_{M,M_{2+\tau}}(N^{-\alpha}),
\end{equation}
where $\hat{\mat{Y}}_N$ is the truncated matrix from Lemma \ref{lemma:truncn} for some $0 < \delta < \min\{1/100, 1/\tau\}$.  Recall that the matrix $\mat{q}$ is a function of $\eta$ and $z$.  From Lemma \ref{lemma:nuz}, we find that $a(\mat{q})$ is the Stieltjes transform of $\nu_z$.  That is,
$$ a(\mat{q}) = \int \frac{1}{x - \eta} d \nu_z(dx) = \int \frac{ \rho_z(x)}{x - \eta} dx. $$

By Lemma \ref{lemma:nuz}, we choose $\beta > 100$ sufficiently large (depending only on $M$) such that $\rho_z$ is supported inside the interval $[-\beta/2, \beta/2]$ for all $|z| \leq M$.  By Lemma \ref{lemma:concentrate} and Lemma \ref{approxthm}, it follows that a.s.
\begin{equation} \label{eq:diststtransf}
	\sup_{|z| \leq M } \sup_{|\eta| \leq 4\beta, \Im(\eta) \geq v_N} | \hat{a}_N(\mat{q}) - a(\mat{q})| = O_{M,M_{2+\tau}}(v_N^4), 
\end{equation}
where $v_N:= N^{-\delta \tau/1000}$, $\hat{a}_N(\mat{q}) := \frac{1}{2m} \tr \hat{\mat{\Gamma}}_N$, and $\hat{\mat{\Gamma}}_N$ is defined identically to $\mat{\Gamma}_N$ except with the matrix $\hat{\mat{H}}_N$ instead of $\mat{H}_N$.  

At this point, the proof of the lemma follows nearly verbatim the proof given in \cite[Lemma 7.14]{NgOCirc}.  The only changes required are notational; we omit the details.  
\end{proof}

\section{Proof of Theorem \ref{thm:prodWigner}} \label{sec:Wigner}

This section is devoted to the proof of Theorem \ref{thm:prodWigner}.  As noted above, the proof of Theorem \ref{thm:prodWigner} is very similar to the proof of Theorem \ref{thm:main}.  We define the linearized matrix $\mat{Z}_N := \frac{1}{\sqrt{N}} \mat{Y}_N$, where
$$ \mat{Y}_N := \begin{bmatrix} 0 & \mat{Y}_{N,1} \\ \mat{Y}_{N,2} & 0 \end{bmatrix}. $$

As before, the proof of Theorem \ref{thm:prodWigner} reduces to showing that the ESD of $\mat{Z}_N$ converges to the circular law as $N$ tends to infinity.  

\begin{theorem} \label{thm:circularWigner}
Under the assumptions of Theorem \ref{thm:prodWigner}, the ESD of $\mat{Z}_N := \frac{1}{\sqrt{N}} \mat{Y}_N$ converges almost surely to the circular law $F_{\mathrm{circ}}$ as $N \to \infty$.
\end{theorem}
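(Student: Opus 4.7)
The plan is to follow the proof of Theorem~\ref{thm:circular} almost verbatim, adapting only those steps that genuinely relied on $|\rho_k| < 1$. Via Girko's Hermitization (Lemma~\ref{lemma:girko}), the convergence of the ESD of $\mat Z_N$ to the circular law reduces, for almost every $z \in \mathbb{C}$, to controlling the singular value distribution of $\mat Z_N - z\mat I$, which through the partial trace of the resolvent of the Hermitization is governed by the same self-consistent equation \eqref{gammadef} as before. Since $\mat Z_N^2$ is block diagonal with blocks $N^{-1}\mat Y_{N,1}\mat Y_{N,2}$ and $N^{-1}\mat Y_{N,2}\mat Y_{N,1}$ having the same eigenvalues as $\mat P_N$, once circularity of $\mat Z_N$ is established, the conclusion of Theorem~\ref{thm:prodWigner} follows by the same pushforward argument used in Section~\ref{sec:linear}.

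The matrix-valued Stieltjes transform analysis of Section~\ref{sec:resolvent} transfers without modification: the Burkholder martingale estimate of Lemma~\ref{lemma:concentrate} uses only the finite-rank resolvent identity, and the self-consistent equation derivation of Lemma~\ref{approxthm} is insensitive to the actual values of $\rho_a$, as observed in the discussion following \eqref{defa}. The truncation scheme of Section~\ref{sec:truncation}, which also never uses $|\rho_k| < 1$, carries over unchanged. Thus for independent real symmetric Wigner matrices $\mat Y_{N,1}, \mat Y_{N,2}$, one still obtains the quantitative convergence of $\mat \Gamma_N(\mat q)$ to $\mat \Gamma(\mat q)$ and hence, via Lemma~\ref{lemma:rate}, of $\nu_{\mat Z_N - z\mat I}$ to $\nu_z$.

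The main obstacle is the least singular value bound. Theorem~\ref{thm:least-sing-value} depends on Theorem~\ref{thm:ellipticsv}, whose proof breaks down when $\rho_k = 1$, so it must be replaced by a Wigner analog (stated later in the paper as Theorem~\ref{thm:least-sing-valueWigner}) that handles deterministic \emph{symmetric} perturbations. For $m=2$, bounding $\sigma_{2N}(\mat Z_N - z\mat I)$ reduces via Schur's complement, as in the proof of Theorem~\ref{thm:least-sing-value}, to bounding $\sigma_N(\mat V_{N,1}\mat V_{N,2} - z^2 \mat I)$, where $\mat V_{N,k} := \frac{1}{\sqrt N}\mat Y_{N,k}$. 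The strategy would be to factor $\mat V_{N,1}\mat V_{N,2} - z^2 \mat I = \mat V_{N,1}(\mat V_{N,2} - z^2 \mat V_{N,1}^{-1})$ and use $\sigma_N(AB) \geq \sigma_N(A)\sigma_N(B)$. The factor $\sigma_N(\mat V_{N,1})$ is handled by the standard Rudelson--Vershynin / Nguyen least singular value bound for Wigner matrices. For the factor $\sigma_N(\mat V_{N,2} - z^2 \mat V_{N,1}^{-1})$, condition on $\mat Y_{N,1}$: on the high-probability event that $\|\mat V_{N,1}^{-1}\|$ is at most polynomial in $N$, the perturbation $\mat F := -z^2 \mat V_{N,1}^{-1}$ is a complex symmetric (complex scalar times real symmetric) deterministic matrix with polynomially bounded entries, so by the independence of $\mat Y_{N,1}$ and $\mat Y_{N,2}$ one applies the symmetric-perturbation Wigner bound to $\mat V_{N,2} + \mat F$ to obtain $\sigma_N \geq N^{-A}$ with probability $1 - o(N^{-2})$.

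With this singular value bound in hand, the remaining steps of Section~\ref{sec:complete}---integration by parts, splitting the $\log |x|^2$ integral into its regular and singular parts, $L^2$ uniform integrability, and Fubini to exchange ``a.e.\ $z$'' with ``a.s.''---carry over verbatim. The only genuinely new ingredient, and the principal difficulty, is thus the least singular value bound for a real symmetric Wigner matrix perturbed by a complex symmetric (not Hermitian) deterministic matrix; extending the existing Nguyen-type bound in \cite{NgLSV} from real symmetric to complex symmetric perturbations is where I expect the bulk of the technical work to lie.
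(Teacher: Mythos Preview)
Your proposal is correct and follows essentially the same route as the paper: the Stieltjes transform analysis, truncation, and the arguments of Section~\ref{sec:complete} carry over verbatim since none of them use $|\rho_k|<1$, and the only new ingredient is the least singular value bound, which you reduce (via Schur complement and the factorization trick) to Nguyen's bound for a Wigner matrix with a complex symmetric deterministic shift, exactly as the paper does in Theorem~\ref{thm:least-sing-valueWigner}. One remark: the paper does not redo the complex-symmetric extension of \cite{NgLSV} but simply observes (citing a personal communication, Theorem~\ref{thm:NgLSV}) that Nguyen's proof already only uses $(\mat F_{N,k})_{ij}=(\mat F_{N,k})_{ji}$ rather than Hermiticity, so the ``bulk of the technical work'' you anticipate is in fact already contained in \cite{NgLSV}.
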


The proof that Theorem \ref{thm:prodWigner} follows from Theorem \ref{thm:circularWigner} is identical to the proof given in Section \ref{sec:linear}.   

The proof of Theorem \ref{thm:circularWigner} follows the same arguments outlined in Section \ref{sec:complete} (taking $\mat{A}_N = 0$).  Indeed, the proof in Section \ref{sec:complete} requires three key inputs: Lemma \ref{lemma:concentrate}, Theorem \ref{approxthm}, and Theorem \ref{thm:least-sing-value}.  Thus, in order to complete the proof, we will need versions of these results for the matrix $\mat{Z}_N$ defined above.  

We first observe that both Lemma \ref{lemma:concentrate} and Theorem \ref{approxthm} hold for the matrix $\mat{Z}_N$ defined above.  Indeed, the matrices $\mat{Y}_{N,1}$ and $\mat{Y}_{N,2}$ (as well as their truncated counterparts) are elliptic random matrices.  In fact, the proofs of Lemma \ref{lemma:concentrate} and Theorem \ref{approxthm} do not require any conditions on the correlations $\rho_k$.  Therefore, it only remains to prove the following analogue of Theorem \ref{thm:least-sing-value}.  

\begin{theorem}[Least singular value bound] \label{thm:least-sing-valueWigner}
Under the assumptions of Theorem \ref{thm:prodWigner}, there exists $A >0$ such that for almost every $z \in \mathbb{C}$, almost surely
$$ \lim_{N \to \infty} \indicator{\sigma_{2N}( \mat{Z}_N - z\mat{I}) \leq N^{-A}} = 0. $$
\end{theorem}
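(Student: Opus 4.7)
The plan is to mirror the proof of Theorem \ref{thm:least-sing-value}, replacing the appeal to Theorem \ref{thm:ellipticsv} (which is not available when $\rho_k = 1$) with the symmetric Wigner least singular value bound of Nguyen \cite{NgLSV}. Since the conclusion is only required for a.e.\ $z \in \mathbb{C}$, I may assume $z \neq 0$; the case $z = 0$ is handled directly, as the singular values of $\mat{Z}_N$ then split into those of $N^{-1/2} \mat{Y}_{N,1}$ and $N^{-1/2} \mat{Y}_{N,2}$. By the Borel--Cantelli lemma, it suffices to produce $A > 0$ so that
$$ \Prob \left( \| (\mat{Z}_N - z\mat{I}_{2N})^{-1} \| \geq N^{A} \right) = O(N^{-2}). $$

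The first step is a block reduction. Applying the Schur complement identity \eqref{eq:schur} to
$$ \mat{Z}_N - z \mat{I}_{2N} = \begin{bmatrix} -z\mat{I}_N & \frac{1}{\sqrt{N}} \mat{Y}_{N,1} \\ \frac{1}{\sqrt{N}} \mat{Y}_{N,2} & -z\mat{I}_N \end{bmatrix}, $$
the diagonal blocks of the inverse become
$$ zN ( \mat{Y}_{N,1} \mat{Y}_{N,2} - Nz^2 \mat{I}_N )^{-1} \quad \text{and} \quad zN ( \mat{Y}_{N,2} \mat{Y}_{N,1} - Nz^2 \mat{I}_N )^{-1}, $$
while the off-diagonal blocks are products of these with $N^{-1/2} \mat{Y}_{N,k}$. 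Trivial polynomial bounds on $\| \mat{Y}_{N,k} \|$, obtained via Markov's inequality applied to $\| \mat{Y}_{N,k} \|_2^2$, then reduce the problem to showing that $\sigma_N(\mat{Y}_{N,1} \mat{Y}_{N,2} - Nz^2 \mat{I}_N)$, together with its swapped-indices counterpart, is at least $N^{-A_0}$ with probability $1 - O(N^{-2})$ for some constant $A_0$.

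The second and key step is the factorization, valid whenever $\mat{Y}_{N,1}$ is invertible,
$$ \mat{Y}_{N,1} \mat{Y}_{N,2} - Nz^2 \mat{I}_N = \mat{Y}_{N,1} ( \mat{Y}_{N,2} - Nz^2 \mat{Y}_{N,1}^{-1} ), $$
which gives
$$ \| (\mat{Y}_{N,1} \mat{Y}_{N,2} - Nz^2 \mat{I}_N)^{-1} \| \leq \| \mat{Y}_{N,1}^{-1} \| \cdot \| ( \mat{Y}_{N,2} - Nz^2 \mat{Y}_{N,1}^{-1} )^{-1} \|. $$
I would bound the two factors separately. First, since $\mat{Y}_{N,1}$ is a real symmetric Wigner matrix, the unperturbed case of \cite{NgLSV} yields $\Prob(\sigma_N(\mat{Y}_{N,1}) \leq N^{-A'}) = O(N^{-2})$ for some $A' > 0$. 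Second, condition on $\mat{Y}_{N,1}$ and set $\mat{F}_N := -Nz^2 \mat{Y}_{N,1}^{-1}$. On the good event $\{ \sigma_N(\mat{Y}_{N,1}) \geq N^{-A'} \}$, the entries of $\mat{F}_N$ are bounded by $N^{A'+1} |z|^2$; moreover, $\mat{F}_N$ is symmetric because $\mat{Y}_{N,1}^{-1}$ is symmetric. Since $\mat{Y}_{N,2}$ is independent of $\mat{Y}_{N,1}$, the symmetric-perturbation version of \cite{NgLSV} then gives $\Prob(\sigma_N(\mat{Y}_{N,2} + \mat{F}_N) \leq N^{-A''} \mid \mat{Y}_{N,1}) = O(N^{-2})$ on the good event, for some $A''$. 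Combining these estimates and choosing $A$ sufficiently large finishes the argument.

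The main delicate point, and the reason the strategy is confined to $m = 2$, is that $\mat{F}_N$ is generally complex symmetric rather than real symmetric. If the formulation of \cite{NgLSV} is stated only for real symmetric perturbations, I would split $z^2 = a + \sqrt{-1} b$ with $a, b \in \mathbb{R}$ and reduce to that case via the standard complexification embedding of an $N \times N$ complex symmetric matrix into a $2N \times 2N$ real matrix with related singular values, applied to the real symmetric summands $-Na \mat{Y}_{N,1}^{-1}$ and $-Nb \mat{Y}_{N,1}^{-1}$. For $m \geq 3$ an analogous factoring would iteratively produce non-symmetric deterministic perturbations, falling outside the scope of \cite{NgLSV}; this is precisely the obstruction noted in Remark \ref{rem:lsv}.
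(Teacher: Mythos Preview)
Your overall strategy---Borel--Cantelli reduction, Schur complement block decomposition, Markov bounds on $\|\mat{Y}_{N,k}\|$, the factorization
$$ \mat{Y}_{N,1}\mat{Y}_{N,2} - Nz^2 \mat{I}_N = \mat{Y}_{N,1}\bigl(\mat{Y}_{N,2} - Nz^2 \mat{Y}_{N,1}^{-1}\bigr), $$
and two successive appeals to Nguyen's symmetric least singular value bound---is exactly the paper's approach (with the cosmetic difference that the paper pulls out $\mat{Y}_{N,2}$ on the right rather than $\mat{Y}_{N,1}$ on the left).

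The one genuine gap is your treatment of the complex symmetric perturbation $\mat{F}_N = -Nz^2 \mat{Y}_{N,1}^{-1}$. The paper handles this by stating Theorem~\ref{thm:NgLSV} in the complex symmetric form, noting (via personal communication \cite{NgPC}) that Nguyen's proof in \cite{NgLSV} only uses $(\mat{F}_{N,k})_{ij} = (\mat{F}_{N,k})_{ji}$, not reality. Your proposed workaround via the real $2N \times 2N$ embedding does not reduce to an instance of the real symmetric theorem: writing $\mat{F}_N = \mat{A} + \sqrt{-1}\,\mat{B}$ with $\mat{A},\mat{B}$ real symmetric, the embedded matrix
$$ \begin{bmatrix} \mat{Y}_{N,2} + \mat{A} & -\mat{B} \\ \mat{B} & \mat{Y}_{N,2} + \mat{A} \end{bmatrix} $$
has a deterministic part that is not symmetric (the off-diagonal blocks are $\pm\mat{B}$), and a random part $\diag(\mat{Y}_{N,2},\mat{Y}_{N,2})$ whose entries are not independent (the two blocks are identical). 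Neither the symmetry hypothesis on the perturbation nor the independence hypothesis on the Wigner matrix in \cite{NgLSV} is met. You therefore need either the complex symmetric extension of \cite{NgLSV} that the paper invokes, or a different reduction.
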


In order to prove Theorem \ref{thm:least-sing-valueWigner}, we will need the following result due to Nguyen \cite{NgLSV}.  The version stated in \cite{NgLSV} requires that the deterministic matrices $\mat{F}_{N,k}$ be real symmetric.  However, the proof given in \cite{NgLSV} only requires that $\mat{F}_{N,k}$ be complex symmetric \cite{NgPC}.  In particular, the proof only uses that the $(i,j)$-entry of $\mat{F}_{N,k}$ be equal to the $(j,i)$-entry.  We present the most general version below.  

\begin{theorem}[Theorem 1.5 from \cite{NgLSV}] \label{thm:NgLSV}
For each $k=1,2$, let $\mat{F}_{N,k}$ be a $N \times N$ complex symmetric matrix whose entries are bounded in magnitude by $N^\alpha$, for some $\alpha > 0$.  Then, under the assumptions of Theorem \ref{thm:prodWigner}, for any $B > 0$, there exists $A>0$ (depending on $\alpha, B$) such that
$$ \Prob \left( \min_{k=1,2} \sigma_{N}( \mat{Y}_{N,k} + \mat{F}_{N,k}) \leq N^{-A} \right) = O(N^{-B}). $$
\end{theorem}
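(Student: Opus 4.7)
The plan is to follow the template of the proof of Theorem \ref{thm:least-sing-value}, substituting Theorem \ref{thm:NgLSV} for Theorem \ref{thm:ellipticsv}. The essential new difficulty is that Theorem \ref{thm:NgLSV} requires the deterministic shift of each $\mat Y_{N,k}$ to be \emph{complex symmetric}, so the Schur-complement factorizations of $(\mat Z_N - z\mat I)^{-1}$ must be chosen to preserve that symmetry. By Borel--Cantelli, it suffices to prove that for almost every $z \in \mathbb{C}$ and some $A > 0$,
$$ \Prob\!\left(\|(\mat Z_N - z\mat I)^{-1}\| \geq N^A\right) = O(N^{-2}). $$
Writing $\mat V_{N,k} := N^{-1/2}\mat Y_{N,k}$, the block inversion formula \eqref{eq:schur} expresses the four $N \times N$ blocks of $(\mat Z_N - z\mat I)^{-1}$ as
$$ -z(z^2\mat I - \mat V_{N,1}\mat V_{N,2})^{-1}, \quad -\mat V_{N,1}(z^2\mat I - \mat V_{N,2}\mat V_{N,1})^{-1}, $$
$$ -(z^2\mat I - \mat V_{N,2}\mat V_{N,1})^{-1}\mat V_{N,2}, \quad -z(z^2\mat I - \mat V_{N,2}\mat V_{N,1})^{-1}. $$
Markov's inequality applied to $\E \|\mat Y_{N,k}\|_2^2 = O(N^2)$ gives $\Prob(\|\mat V_{N,k}\| \geq N^{3/2}) = O(N^{-2})$, so by the union bound the problem reduces to proving, for $j = 1, 2$ and some $A' > 0$,
$$ \Prob\!\left(\sigma_N(z^2\mat I - \mat V_{N,j}\mat V_{N,3-j}) \leq N^{-A'}\right) = O(N^{-2}). $$

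The factorization step is the heart of the argument. Applying Theorem \ref{thm:NgLSV} with $\mat F_{N,k} = 0$ and $B = 10$ yields $A_0 > 0$ with $\Prob(\sigma_N(\mat Y_{N,k}) \leq N^{-A_0}) = O(N^{-10})$ for $k = 1,2$. On this high-probability event, $\mat Y_{N,j}$ is invertible with $\|\mat Y_{N,j}^{-1}\| \leq N^{A_0}$, and the key algebraic observation is that the inverse of a real symmetric matrix is real symmetric; hence $\mat Y_{N,j}^{-1}$ (and therefore any scalar multiple of it) is \emph{complex symmetric}. Using $z^2 \mat I = z^2 \mat V_{N,j} \mat V_{N,j}^{-1}$, factor
$$ z^2\mat I - \mat V_{N,j}\mat V_{N,3-j} = -N^{-1/2}\mat V_{N,j}\bigl(\mat Y_{N,3-j} + \mat F_{N,3-j}\bigr), \qquad \mat F_{N,3-j} := -z^2 N\mat Y_{N,j}^{-1}. $$
On the good event and for fixed $z$, the entries of the complex symmetric matrix $\mat F_{N,3-j}$ are bounded by $|z|^2 N \|\mat Y_{N,j}^{-1}\| \leq N^\alpha$ with $\alpha := A_0 + 2$.

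Now exploit the independence of $\mat Y_{N,1}$ and $\mat Y_{N,2}$. Freezing $\mat Y_{N,j}$, the matrix $\mat F_{N,3-j}$ is deterministic and complex symmetric with the required entrywise bound, so Theorem \ref{thm:NgLSV} applied conditionally produces $A_1 > 0$ with $\Prob(\sigma_N(\mat Y_{N,3-j} + \mat F_{N,3-j}) \leq N^{-A_1}) = O(N^{-10})$. Combining this with $\sigma_N(\mat V_{N,j}) = N^{-1/2}\sigma_N(\mat Y_{N,j}) \geq N^{-1/2 - A_0}$ via the submultiplicativity $\sigma_N(AB) \geq \sigma_N(A)\sigma_N(B)$ gives
$$ \sigma_N(z^2\mat I - \mat V_{N,j}\mat V_{N,3-j}) \geq N^{-1}\sigma_N(\mat Y_{N,j})\sigma_N(\mat Y_{N,3-j} + \mat F_{N,3-j}) \geq N^{-1-A_0-A_1} $$
with probability at least $1 - O(N^{-10})$, which completes the reduction.

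The main obstacle is the symmetry hypothesis in Theorem \ref{thm:NgLSV}: the available Wigner least-singular-value bound demands a complex symmetric shift, and a corresponding result for arbitrary deterministic shifts is the open problem alluded to in Remark \ref{rem:lsv}. The argument above works precisely because the shift $-z^2 N \mat Y_{N,j}^{-1}$ inherits symmetry from $\mat Y_{N,j}$; a direct generalization to products of three or more real symmetric Wigner matrices would require the symmetric-plus-arbitrary-deterministic least-singular-value bound, which is why Theorem \ref{thm:prodWigner} is stated only for $m = 2$.
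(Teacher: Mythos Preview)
Your write-up is not a proof of the stated theorem. Theorem~\ref{thm:NgLSV} is a \emph{cited} result (Theorem~1.5 of Nguyen~\cite{NgLSV}) that the paper imports as a black box; it is not proved here at all, and its proof requires the inverse Littlewood--Offord machinery developed in~\cite{NgLSV}. What you have actually written is a proof of Theorem~\ref{thm:least-sing-valueWigner}, the least-singular-value bound for the linearized matrix $\mat Z_N$. Your argument even \emph{invokes} Theorem~\ref{thm:NgLSV} twice (once with $\mat F_{N,k}=0$, once conditionally with $\mat F_{N,3-j}=-z^2 N\mat Y_{N,j}^{-1}$), so as a proof of Theorem~\ref{thm:NgLSV} it would be circular.

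Viewed instead as a proof of Theorem~\ref{thm:least-sing-valueWigner}, your argument is correct and essentially identical to the paper's: the same Borel--Cantelli reduction, the same block-inversion via \eqref{eq:schur}, the same factorization pulling out $\mat V_{N,j}$ so that the remaining shift $-z^2 N\mat Y_{N,j}^{-1}$ is complex symmetric (inheriting symmetry from the real symmetric $\mat Y_{N,j}$), and the same conditional application of Theorem~\ref{thm:NgLSV} exploiting the independence of $\mat Y_{N,1}$ and $\mat Y_{N,2}$. Your closing remark about why the method does not extend to $m\geq 3$ is also exactly the obstruction the paper identifies in Remark~\ref{rem:lsv}.
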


We now prove Theorem \ref{thm:least-sing-valueWigner}

\begin{proof}[Proof of Theorem \ref{thm:least-sing-valueWigner}]
Let $A$ be a large positive constant to be chosen later.  By the Borel-Cantelli lemma, it suffices to show that, for almost every $z \in \mathbb{C}$,
$$ \Prob \left( \left\| (\mat{Z}_N - z \mat{I})^{-1} \right\| \geq N^{A} \right) = O(N^{-2}). $$
By \eqref{eq:schur}, we observe that, for $z \neq 0$, $(\mat{Z}_N - z \mat{I})^{-1}$ has the form
$$ \begin{bmatrix} z \left(\frac{1}{N} \mat{Y}_{N,1} \mat{Y}_{N,2} - z^2 \mat{I} \right)^{-1} & \left(\frac{1}{N} \mat{Y}_{N,1} \mat{Y}_{N,2} - z^2 \mat{I} \right)^{-1} \frac{1}{\sqrt{N}} \mat{Y}_{N,1} \\ z^2 \left(\frac{1}{N} \mat{Y}_{N,2} \mat{Y}_{N,1} - z^2 \mat{I} \right)^{-1} \frac{1}{\sqrt{N}} \mat{Y}_{N,2} & z \left( \frac{1}{N} \mat{Y}_{N,2} \mat{Y}_{N,1} - z^2 \mat{I} \right)^{-1} \end{bmatrix} $$
provided the relevant inverses exist.  Thus, it suffices to show that the spectral norm of each block above is $O(N^{A})$ with probability $1-O(N^{-2})$.  The treatment of each block is similar; as an illustration, we will show that
$$ \Prob \left( \left\| \left(\frac{1}{N} \mat{Y}_{N,1} \mat{Y}_{N,2} - z^2 \mat{I} \right)^{-1} \frac{1}{\sqrt{N}} \mat{Y}_{N,1} \right\| \geq N^{A} \right) = O(N^{-2}). $$

Indeed, since 
$$ \Prob \left( \left\| \frac{1}{\sqrt{N}} \mat{Y}_{N,1} \right\| \geq N^{100} \right) \leq N^{-200} \E \left\| \frac{1}{\sqrt{N}} \mat{Y}_{N,1} \right\|^2_2 = O(N^{-2}), $$
it suffices to show that there exists $A>200$ such that
\begin{equation} \label{eq:show2prodbnd}
	\Prob \left( \left\| \left( \frac{1}{N} \mat{Y}_{N,1} \mat{Y}_{N,2} - z^2 \mat{I} \right)^{-1} \right\| \geq N^{A} \right) = O(N^{-2}). 
\end{equation}

By Theorem \ref{thm:NgLSV}, there exists $A'>0$ such that the event
$$ \Omega_N := \left\{ \left\| \left( \frac{1}{\sqrt{N}} \mat{Y}_{N,2} \right)^{-1} \right\| \leq N^{A'} \right\} $$
holds with probability $1-O(N^{-2})$.  On this event, we observe that
$$ \left( \frac{1}{N} \mat{Y}_{N,1} \mat{Y}_{N,2} - z^2 \mat{I} \right)^{-1} = \left( \frac{1}{\sqrt{N}} \mat{Y}_{N,2} \right)^{-1} \left( \frac{1}{\sqrt{N}} \mat{Y}_{N,1} - z^2 \left(\frac{1}{\sqrt{N}} \mat{Y}_{N,2} \right)^{-1} \right)^{-1}. $$
Therefore, we conclude that
\begin{align*}
	\Prob &\left( \left\| \left( \frac{1}{N} \mat{Y}_{N,1} \mat{Y}_{N,2} - z^2 \mat{I} \right)^{-1} \right\| \geq N^{A} \right) \\
	&\leq \Prob \left( \left\| \left( \frac{1}{N} \mat{Y}_{N,1} \mat{Y}_{N,2} - z^2 \mat{I} \right)^{-1} \right\| \geq N^{A} \bigg| \Omega_N \right) \Prob(\Omega_N) + \Prob(\Omega_N^C) \\
	&\leq O(N^{-2}) + \Prob \left( \left\| \left( \frac{1}{\sqrt{N}} \mat{Y}_{N,1} - z^2 \left( \frac{1}{\sqrt{N}} \mat{Y}_{N,2}\right)^{-1} \right)^{-1} \right\| \geq N^{A/2} \bigg| \Omega_N \right)
\end{align*} 
for $A$ sufficiently large.  

We now recall that $\mat{Y}_{N,1}$ and $\mat{Y}_{N,2}$ are independent, and, on the event $\Omega_N$, the entries of $\left( \frac{1}{\sqrt{N}} \mat{Y}_{N,2} \right)^{-1}$ are bounded in magnitude by $N^{A'}$.  In addition, we observe that $z^2 \left( \frac{1}{\sqrt{N}} \mat{Y}_{N,2} \right)^{-1}$ is a complex symmetric matrix since $\mat{Y}_{N,2}$ is a real symmetric matrix.  Therefore, by Theorem \ref{thm:NgLSV}, there exists $A>0$ such that
$$ \Prob \left( \left\| \left( \frac{1}{\sqrt{N}} \mat{Y}_{N,1} - z^2 \left( \frac{1}{\sqrt{N}} \mat{Y}_{N,2}\right)^{-1} \right)^{-1} \right\| \geq N^{A/2} \bigg| \Omega_N \right) = O(N^{-2}). $$
This verifies \eqref{eq:show2prodbnd}, and hence the proof of Theorem \ref{thm:least-sing-valueWigner} is complete.  
\end{proof}

\appendix

\section{Truncation} \label{sec:truncationproof}

This section contains somewhat standard proofs of the truncation results in Section \ref{sec:truncation}.  

\begin{proof}[Proof of Lemma \ref{lemma:truncation}]
We begin by observing that 
\begin{equation} \label{eq:1var}
	\var(\tilde{\xi}_i^{(N)}) = \E|\tilde{\xi}_i^{(N)}|^2 \leq \E|\xi_i \indicator{|\xi_i| \leq N^\delta}|^2 \leq 1 
\end{equation}
for $i=1,2$.  We also have
\begin{align*}
	\left| 1 - \var(\tilde{\xi}_i^{(N)}) \right| \leq 2 \E |\xi_i|^2 \indicator{|\xi_i| > N^\delta} \leq 2 \frac{M_{2+\tau}}{N^{\delta\tau}},
\end{align*}
which verifies property \eqref{item:varbnd}.  

Thus, we take $N_0$ sufficiently large such that
\begin{equation} \label{eq:varbnd}
	\var(\tilde{\xi}_i^{(N)}) \geq 1/2 
\end{equation}
for all $i=1,2$ and $N \geq N_0$.  Then \eqref{item:meanvar} follows by construction and the bound in \eqref{eq:varbnd}.  

It remains to prove \eqref{item:rho}.  Set $\tilde{\rho}^{(N)} := \E[ \tilde{\xi}_1^{(N)} \tilde{\xi}_2^{(N)}]$.  Then by the Cauchy-Schwarz inequality and \eqref{eq:1var}, we have
\begin{align*}
	|\tilde{\rho}^{(N)} - \rho| &\leq \sqrt{ \E |\xi_1|^2 \indicator{|\xi_1| > N^\delta}} \sqrt{ \E|\xi_2|^2 \indicator{|\xi_2| > N^\delta}} + \sum_{i=1}^2 \left( \sqrt{ \E |\xi_i|^2 \indicator{|\xi_i| > N^\delta}} +  \E |\xi_i|^2 \indicator{|\xi_i| > N^\delta} \right) \\
		&\leq 5 \frac{M_{2+\tau}}{N^{\delta \tau/2}}. 
\end{align*}
By property \eqref{item:varbnd} and \eqref{eq:1var}, we obtain
\begin{align*}
	|\hat{\rho}^{(N)} - \tilde{\rho}^{(N)}| &\leq \E |\tilde{\xi}_1^{(N)} \tilde{\xi}_2^{(N)}| \left| \frac{1}{\sqrt{ \var(\tilde{\xi}_1^{(N)}) \var(\tilde{\xi}_2^{(N)})} } - 1 \right| \\
		&\leq 2 \left| \sqrt{ \var(\tilde{\xi}_1^{(N)}) \var(\tilde{\xi}_2^{(N)})} - 1 \right| \\
		&\leq 2 \left|\var(\tilde{\xi}_1^{(N)}) \var(\tilde{\xi}_2^{(N)}) - 1 \right| \\
		&\leq 2 \left| \var(\tilde{\xi}_1^{(N)}) - 1 \right| + 2 \left| \var(\tilde{\xi}_2^{(N)}) - 1 \right| \\
		&\leq 8 \frac{M_{2+\tau}}{N^{\delta \tau}}. 
\end{align*}
Combining the bounds above completes the proof of property \eqref{item:rho}.  
\end{proof}

\begin{proof}[Proof of Lemma \ref{lemma:lln}]
We begin with \eqref{eq:limsupYN}.  By the block structure of $\mat{Y}_N$, it suffices to show that a.s.
\begin{equation} \label{eq:limsupYNshow}
	\limsup_{N \to \infty} \frac{1}{N^2} \| \mat{Y}_{N,k} \|_2^2 < \infty 
\end{equation}
for $k = 1,\ldots,m$.  We now decompose
\begin{align*}
	\| \mat{Y}_{N,k} \|_2^2 &= \sum_{1 \leq i < j \leq N} | (\mat{Y}_{N,k})_{ij} |^2 + \sum_{1 \leq j < i \leq N} | (\mat{Y}_{N,k})_{ij} |^2 + \sum_{i=1}^N | (\mat{Y}_{N,k})_{ii} |^2,
\end{align*}
where the summands in each sum are iid copies of $\xi_{k,1}, \xi_{k,2}$, and $\zeta_k$, respectively.  Thus, applying the law of large numbers to each sum yields \eqref{eq:limsupYNshow}.  

For \eqref{eq:limsuphat}, we decompose 
\begin{align*}
	\| \hat{\mat{Y}}_N \|_2^2 &= \sum_{k=1}^m \| \hat{\mat{Y}}_{N,k} \|^2_2 \\
		&\leq 2 \sum_{k=1}^m \sum_{i,j=1}^N | (\tilde{\mat{Y}}_{N,k})_{ij} |^2 \\
		& \leq 4 \sum_{k=1}^m  \sum_{i,j=1}^N \left( |(\mat{Y}_{N,k})_{ij} |^2 + \E |(\mat{Y}_{N,k})_{ij} |^2\right) \\
		& \leq 4 \left( \| \mat{Y}_N \|_2^2 + \E \| \mat{Y}_N \|_2^2 \right)
\end{align*}
by Lemma \ref{lemma:truncation}.  As the atom variables have finite variance, \eqref{eq:limsuphat} follows from \eqref{eq:limsupYN}.  

It remains to prove \eqref{eq:limdiff}.  By the triangle inequality and the block structure of $\mat{Y}_N$ and $\hat{\mat{Y}}_N$, it suffices to show that almost surely, for $k=1,\ldots,m$,
\begin{equation} \label{eq:twolims}
	\lim_{N \to \infty} \frac{N^{\delta \tau}}{N^2} \| \mat{Y}_{N,k} - \tilde{\mat{Y}}_{N,k} \|_2^2 = 0 \quad \text{and} \quad \lim_{N \to \infty} \frac{N^{\delta \tau}}{N^2} \| \tilde{\mat{Y}}_N - \hat{\mat{Y}}_N \|_2^2 = 0.
\end{equation}

Fix $1 \leq k \leq m$.  Then we have
\begin{align*}
	\| \mat{Y}_{N,k} - \tilde{\mat{Y}}_{N,k} \|_2^2 \leq 2 \sum_{i,j=1}^N \left(  |(\mat{Y}_{N,k})_{ij}|^2 \indicator{|(\mat{Y}_{N,k})_{ij}| > N^\delta} + \E  |(\mat{Y}_{N,k})_{ij}|^2  \indicator{|(\mat{Y}_{N,k})_{ij}| > N^\delta}\right).  
\end{align*}
Since $\delta \tau < 1$, we have that a.s.
\begin{align*}
	\limsup_{N \to \infty} \frac{N^{\delta \tau}}{N^2} &\sum_{i=1}^N \left(  |(\mat{Y}_{N,k})_{ii}|^2 \indicator{|(\mat{Y}_{N,k})_{ij}| > N^\delta} + \E  |(\mat{Y}_{N,k})_{ii}|^2 \right) \\
		&\leq \limsup_{N \to \infty} \frac{N^{\delta \tau} }{N^2} \sum_{i=1}^N \left( |(\mat{Y}_{N,k})_{ii}|^2 + \E |(\mat{Y}_{N,k})_{ii}|^2 \right) = 0
\end{align*}
by the law of large numbers.  On the other hand, 
\begin{align*} 
	\frac{N^{\delta \tau}}{N^2} & \sum_{1 \leq i < j \leq N} \left(  |(\mat{Y}_{N,k})_{ij}|^2 \indicator{|(\mat{Y}_{N,k})_{ij}| > N^\delta} + \E  |(\mat{Y}_{N,k})_{ij}|^2  \indicator{|(\mat{Y}_{N,k})_{ij}| > N^\delta} \right) \\
		&\leq \frac{1}{N^2} \sum_{1 \leq i < j \leq N} \left(  |(\mat{Y}_{N,k})_{ij}|^{2+\tau} \indicator{|(\mat{Y}_{N,k})_{ij}| > N^\delta} + \E  |(\mat{Y}_{N,k})_{ij}|^{2+\tau}  \indicator{|(\mat{Y}_{N,k})_{ij}| > N^\delta} \right).
\end{align*}
By the dominated convergence theorem 
\begin{align*}
	\limsup_{N \to \infty} \frac{1}{N^2} &\E  |(\mat{Y}_{N,k})_{ij}|^{2+\tau}  \indicator{|(\mat{Y}_{N,k})_{ij}| > N^\delta}
		\leq \limsup_{N \to \infty}  \E |\xi_{k,1}|^{2+\tau} \indicator{|\xi_{k,1}| > N^{\delta}} = 0. 
\end{align*}
Furthermore, by the law of large numbers, we have a.s.
$$ \limsup_{N \to \infty} \frac{1}{N^2} \sum_{1 \leq i < j \leq N}   |(\mat{Y}_{N,k})_{ij}|^{2+\tau} \indicator{|(\mat{Y}_{N,k})_{ij}| > N^\delta} = 0. $$
The sum involving the indices $1 \leq j < i \leq N$ is handled similarly, and hence we conclude that a.s. 
$$  \lim_{N \to \infty} \frac{N^{\delta \tau}}{N^2} \| \mat{Y}_{N,k} - \tilde{\mat{Y}}_{N,k} \|_2^2 = 0. $$

We now consider the second limit in \eqref{eq:twolims}.  By Lemma \ref{lemma:truncation}, we obtain 
\begin{align*}
	\| \tilde{\mat{Y}}_{N} - \hat{\mat{Y}}_N \|_2^2 &= \sum_{k=1}^m \sum_{i,j=1}^N |(\hat{\mat{Y}}_{N,k})_{ij}|^2 \left| \sqrt{ \var( (\tilde{\mat{Y}}_{N,k})_{ij}) } - 1 \right|^2 \\
		&\leq \sum_{k=1}^m \sum_{i,j=1}^N |(\hat{\mat{Y}}_{N,k})_{ij}|^2 | \var( (\tilde{\mat{Y}}_{N,k})_{ij}) - 1 |^2 \\
		&\leq \frac{1}{N^{2 \delta \tau}} \sum_{k=1}^m \sum_{i,j=1}^N |(\hat{\mat{Y}}_{N,k})_{ij}|^2 \\
		&\leq \frac{1}{N^{2 \delta \tau}} \| \hat{\mat{Y}}_{N} \|_2^2.
\end{align*}
Thus, by \eqref{eq:limsuphat}, a.s. we have
$$ \limsup_{N \to \infty}  \frac{N^{\delta \tau}}{N^2} \| \tilde{\mat{Y}}_{N} - \hat{\mat{Y}}_N \|_2^2 \leq \limsup_{N \to \infty} \frac{1}{N^{2 + \delta \tau}} \| \hat{\mat{Y}}_N \|_2^2 = 0, $$
and the proof of the lemma is complete
\end{proof}


\begin{thebibliography}{99}

\bibitem{AB} G. Akemann, Z. Burda, \emph{Universal microscopic correlation functions for products of independent Ginibre matrices}, J. Phys. A: Math. Theor. 45 (2012).

\bibitem{ABK} G. Akemann, Z. Burda, M. Kieburg, \emph{Universal distribution of Lyapunov exponents for products of Ginibre matrices}, available at {\tt arXiv:1406.0803}.

\bibitem{AIK} G. Akemann, J. R. Ipsen, M. Kieburg, \emph{Products of Rectangular Random Matrices: Singular Values and Progressive Scattering}, Phys. Rev. E 88, (2013).  

\bibitem{AIK2} G. Akemann, J. R. Ipsen, E. Strahov, \emph{Permanental processes from products of complex and quaternionic induced Ginibre ensembles}, available at {\tt arXiv:1404.4583}.  

\bibitem{AKW} G. Akemann, M. Kieburg, L. Wei, \emph{Singular value correlation functions for products of Wishart random matrices}, J. Phys. A: Math. Theor. 46 (2013).

\bibitem{AS} G. Akemann, E. Strahov, \emph{Hole probabilities and overcrowding estimates for products of complex Gaussian matrices}, J. Stat. Phys. (2013), Volume 151, Issue 6, pp 987--1003.  

\bibitem{A}  G. Anderson, \emph{Convergence of the largest singular value of a polynomial in independent Wigner matrices}, Ann. Probab. Volume 41, Number 3B (2013), 2103--2181.

\bibitem{Bcirc} Z.~D.~Bai, {\it Circular law}, Ann. Probab. \textbf{25} (1997), 494--529. 

\bibitem{BSbook} Z. D. Bai, J. Silverstein, {\em Spectral analysis of large dimensional random matrices}, Mathematics Monograph Series \textbf{2}, Science Press, 
Beijing 2006.

\bibitem{B} C. Bordenave, {\em On the spectrum of sum and product of non-hermitian random matrices}, Elect. Comm. in Probab. \textbf{16} (2011), 104--113.

\bibitem{BC} C. Bordenave, D. Chafa\"i, {\em Around the circular law.} Probability Surveys \textbf{9} (2012). 1--89

\bibitem{BL}  P. Biane, F. Lehner \emph{ Computation of some examples of Brown's spectral measure in free probability.}
Colloq. Math. \textbf{90} (2001), no. 2, 181--211.

\bibitem{BJW} Z. Burda, R. A. Janik,and B. Waclaw, \emph{Spectrum of the product of independent random Gaussian matrices}, Phys. Rev. E \textbf{81} (2010).

\bibitem{BJLNS} Z. Burda, A. Jarosz, G. Livan, M. A. Nowak, A. Swiech, \emph{Eigenvalues and Singular Values of Products of Rectangular Gaussian Random Matrices}, Phys. Rev. E 82 (2010).  

\bibitem{Bsurv} Z. Burda, \emph{Free products of large random matrices - a short review of recent developments}, available at {\tt arXiv:1309.2568}.

\bibitem{Bmart} D. L. Burkholder, \textit{Distribution function inequalities for martingales}, Ann. Probab. \textbf{1} 19--42 (1973).

\bibitem{Ed-cir} A.~Edelman, {\it The Probability that a random real Gaussian matrix has $k$ real eigenvalues, related distributions, and the circular Law}, J. Multivariate Anal. \textbf{60}, 203--232 (1997). 

\bibitem{F} P. J. Forrester, \emph{Lyapunov exponents for products of complex Gaussian random matrices}, available at {\tt arXiv:1206.2001}.

\bibitem{F2} P. J. Forrester, \emph{Probability of all eigenvalues real for products of standard Gaussian matrices}, available at {\tt arXiv:1309.7736}.

\bibitem{Gi} J. ~Ginibre, {\it Statistical ensembles of complex, quaternion and real matrices}, J. Math. Phys. \textbf{6} (1965), 440--449.

\bibitem{G1} V.~L.~Girko, {\it Circular law}, Theory Probab. Appl. (1984), 694--706.

\bibitem{G2} V.~L.~Girko, {\it The strong circular law, twenty years later}, II. Random Oper. Stochastic Equations \textbf{12} (2004), no. 3, 255--312.

\bibitem{Gorig} V.~L.~Girko, \textit{Elliptic law}, Theory of Probability and Its Applications, Vol. 30, No. 4 (1985).

\bibitem{Gten} V.~L.~Girko, \textit{The elliptic law: ten years later I}, Random Oper. and Stoch. Equ., Vol. 3, No. 3, pp. 257--302 (1995).

\bibitem{GK} I.~Goldsheid, B.~A.~Khoruzhenko, {\it The Thouless formula for random non-Hermitian Jacobi matrices},  Israel J. Math., \textbf{148} (2005), 331--346.

\bibitem{GNT} F.~G\"otze, A.~Naumov, T.~Tikhomirov,  \textit{On one generalization of the elliptic law for random matrices}, available at {\tt  arXiv:1404.7013}.

\bibitem{GTcirc} F.~G\"otze, T.~Tikhomirov, \textit{The circular law for random matrices}, Ann. Probab. Volume 38, Number 4 1444--1491 (2010).  

\bibitem{GTprod} F.~G\"otze, T.~Tikhomirov, \emph{On the Asymptotic Spectrum of Products of Independent Random Matrices}, available at {\tt arXiv:1012.2710}

\bibitem{HL} U. Haagerup, F. Larsen  \emph{Brown's Spectral Distribution Measure for R-Diagonal Elements in Finite von Neumann Algebras}, Journal of Functional Analysis, Volume 176, Issue 2, 331--367 (2000).

\bibitem{HT} U. Haagerup, S. Thorbj\o rnsen, \textit{ A new application of random matrices: $Ext(C_{red}^*(F_2))$ is not a group.} Ann. of Math. (2) \textbf{162} 711--775 (2005).

\bibitem{HFS} J. Helton, R. Far, R. Speicher, \textit{Operator-valued semicircular elements: solving a quadratic matrix equation with positivity constraints}, Int. Math. Res. Not. (2007). 

\bibitem{HP} F. Hiai and D. Petz, \emph{The Semicircle Law, Free Random Variables and Entropy}, Mathematical Surveys and Monographs, \textbf{77}, American Mathematical Society, Providence, RI, 2000.

\bibitem{HJ} R.~A.~Horn, C.~R.~Johnson, \textit{Matrix Analysis}, Cambridge Univ. Press (1991).  

\bibitem{L} F. Larsen, \emph{ Powers of R-diagonal Elements}, Journ. Operator Theory \textbf{47}, 197--212 (2002).

\bibitem{M} M. L.~Mehta,  {\em Random matrices and the statistical theory of energy levels}, Acad. Press (1967).

\bibitem{M:B} M.~L.~Mehta, {\it Random Matrices}, third edition.  Elsevier/Academic Press, Amsterdam (2004).

\bibitem{Nell} A.~Naumov, \emph{Elliptic law for real random matrices}, available at {\tt arXiv:1201.1639 [math.PR]}.  

\bibitem{NgLSV} H.~Nguyen, \emph{On the least singular value of random symmetric matrices}, Electronic Journal of Probability \textbf{17} (2012), no. 53, 1--19.

\bibitem{NgPC} H.~Nguyen, personal communications, September 2014.  

\bibitem{NgO} H.~Nguyen, S.~O'Rourke,  \emph{The elliptic law}, available at {\tt  arXiv:1208.5883 [math.PR]}. 

\bibitem{NgOCirc} H. Nguyen, S. O'Rourke, \emph{On the concentration of random multilinear forms and the universality of random block matrices}, available at {\tt arXiv:1309.4815}. 

\bibitem{NS} Nica A., Speicher R. \textit{ R-diagonal pairs - a common approach to Haar unitaries and circular elements }
Fields Institute Communications, Vol. 12 (D. Voiculescu, ed.), AMS, 1997, pp 149-188.

\bibitem{OR} S. O'Rourke, D. Renfrew, \emph{Low rank perturbations of large elliptic random matrices}, submitted, available at {\tt arXiv:1309.5326}. 

\bibitem{OS} S. O'Rourke, A. Soshnikov, \emph{Products of Independent Non-Hermitian Random Matrices}, Electronic Journal of Probability, Vol. 16, Art. 81, 2219--2245, (2011). 

\bibitem{PZ} G.~Pan, W.~Zhou, {\it Circular law, extreme singular values and potential theory}, Journal of Multivariate Analysis, \textbf{101} 645--656 (2010).

\bibitem{TVcirc}  T. Tao, V. Vu, \emph{Random matrices: The Circular Law}, Communication in Contemporary Mathematics \textbf{10} (2008), 261--307.

\bibitem{TVbull} T. Tao, V. Vu, {\it  From the Littlewood-Offord problem to the circular law: universality of the spectral distribution of random matrices},
 {Bull.~Amer.~Math.~Soc.}  (N.S.) \textbf{46} (2009), no. 3, 377--396.

\bibitem{TVesd} T.~Tao, V.~Vu, \emph{Random matrices: Universality of ESDs and the circular law}, Ann. Probab. Volume 38, Number \textbf{5} (2010), 2023--2065. 

\bibitem{TVuniv} T.~Tao, V.~Vu, \textit{Random matrices: universality of local eigenvalue statistics}, Acta Math \textbf{206} (2011), 127--204.  

\bibitem{W} E.~P.~Wigner, {\it On the distributions of the roots of certain symmetric matrices}, Ann. Math. \textbf{67}, (1958) 325--327.

\end{thebibliography}
\end{document}